\documentclass[11pt]{article}

\usepackage[a4paper,margin=30mm]{geometry}
\usepackage{amsmath,amssymb,amsthm,mathtools}
\usepackage{mathrsfs}
\usepackage{microtype}
\usepackage[hidelinks]{hyperref}
\hypersetup{
  pdftitle={A Critical Quantitative Landis Estimate for the One-Dimensional Quarter-Laplacian},
  pdfauthor={Adham Gudaimat},
  pdfsubject={Critical quantitative Landis estimate, rough Grushin--Robin Carleman estimate, and quarter-Laplacian},
  pdfkeywords={fractional Landis conjecture, quarter-Laplacian, Grushin operator, Robin boundary feedback, quantitative unique continuation}
}

\newtheorem{theorem}{Theorem}[section]
\newtheorem{proposition}[theorem]{Proposition}
\newtheorem{lemma}[theorem]{Lemma}
\newtheorem{corollary}[theorem]{Corollary}
\theoremstyle{definition}
\newtheorem{definition}[theorem]{Definition}
\theoremstyle{remark}
\newtheorem{remark}[theorem]{Remark}

\newcommand{\R}{\mathbb{R}}
\newcommand{\HH}{\mathcal{H}}
\newcommand{\BB}{\mathcal{B}}
\newcommand{\dd}{\,\mathrm{d}}

\title{\textbf{A Critical Quantitative Landis Estimate\\
for the One-Dimensional Quarter-Laplacian}}

\author{
Adham Gudaimat\\
\small Ministry of Education and Higher Education, Ramallah, Palestine\\
\small \texttt{adhamgudaimat@nhebron.edu.ps}\\
\small ORCID: 0009-0001-3649-6701
}

\date{}

\begin{document}
\maketitle

\begin{abstract}
We establish a quantitative Landis estimate for the one-dimensional
fractional Schr\"odinger equation
\[
(-\Delta)^{1/4}u+V(x)u=0
\qquad\text{in }\mathbb R
\]
with a real-valued bounded potential. If
\(\|V\|_{L^\infty}\le1\), \(\|u\|_{L^\infty}\le C_0\), and
\(\|u\|_{L^2(-1,1)}\ge1\), then
\[
\inf_{|x_0|=R}\|u\|_{L^\infty(x_0-1,x_0+1)}
\ge \exp(-C R\log R)
\]
for all sufficiently large \(R\). After the Caffarelli--Silvestre extension
and the substitution \(y=z^2/2\), the equation becomes a Grushin equation
with a weak Robin condition on the degeneracy line. The corresponding
angular operator has the arithmetic spectrum
\(\kappa_n=2n+\tfrac12\) after half-density conjugation. The central
spectral estimate is
\[
\sup_{\xi\in\mathbb R}
\bigl\|C\bigl((\tau+i\xi)^2-L_0\bigr)^{-1}C^*\bigr\|
\le C\tau^{-1/2}
\]
for parameters separated from the angular lattice. It yields a
linear-weight Carleman estimate for measurable Robin feedback with
absorption threshold \(\tau\ge C(1+\|V\|_\infty^2)\). Quantitative inward
propagation, fixed-scale Grushin-ball propagation, and an
interior-cylinder interpolation estimate then transfer bulk non-vanishing
to the boundary. The Landis rescaling converts the local potential
dependence \(C\|q\|_\infty^2\) into the global rate \(CR\log R\).
\end{abstract}

\noindent\textbf{Keywords:}
fractional Landis conjecture; quarter-Laplacian; Grushin operator; Robin boundary feedback; spectral observability; quantitative unique continuation.

\section{Introduction}

The classical Landis problem asks how rapidly a nontrivial bounded
solution of a Schr\"odinger equation can decay at infinity. Meshkov
\cite{Meshkov} constructed complex-valued bounded potentials admitting
solutions with decay of order \(\exp(-c|x|^{4/3})\), while the quantitative
unique-continuation estimate of Bourgain and Kenig \cite{BK} supplied the
corresponding large-scale lower-bound mechanism. In the real-valued planar
setting, Logunov, Malinnikova, Nadirashvili, and Nazarov \cite{LMNN}
proved the weak Landis conjecture with the decay threshold
\(\exp(-C|x|\sqrt{\log |x|})\).

We consider the one-dimensional fractional Schr\"odinger equation
\begin{equation}\label{eq:frac-sch}
(-\Delta)^{1/4}u+V(x)u=0
\qquad\text{in }\mathbb R,
\end{equation}
with \(V\in L^\infty(\mathbb R;\mathbb R)\). For bounded,
non-differentiable potentials, the quantitative fractional Landis estimates
of R\"uland and Wang \cite{RW} and their variable-coefficient extension by
Kow \cite{KowGeneral} apply when \(s\in(1/4,1)\); neither result includes
the endpoint \(s=1/4\). The rough-potential theory in
\cite{RulandRough,RulandQuant} gives qualitative unique continuation and
local quantitative estimates, but not the solution-independent critical
dependence required by the Landis rescaling. To the best of our knowledge,
no previous result establishes a quantitative Landis estimate at
\(s=1/4\) for one-dimensional real-valued bounded potentials under the
hypotheses stated below. The purpose of this paper is to prove that endpoint
estimate by exploiting the explicit angular spectrum of the associated
Grushin geometry.

The quarter-Laplacian extension \cite{CS}, followed by the change of
variables \(y=z^2/2\), transforms \eqref{eq:frac-sch} into
\[
W_{zz}+z^2W_{xx}=0
\]
in the upper half-plane, with a Robin condition supported on the degeneracy
line \(z=0\). The Robin condition is used throughout in its weak conormal
form, so no pointwise normal derivative of a rough solution is required.
After passing to homogeneous Grushin polar coordinates and logarithmic
radius, the problem becomes a second-order equation on a finite angular
interval with measurable endpoint feedback. The angular operator has an
explicit Gegenbauer basis with eigenvalues \(2n(2n+1)\), and the normalized
eigenfunctions satisfy
\[
|\Phi_n(0)|^2=|\Phi_n(\pi)|^2\asymp(n+1)^{1/2}.
\]
This endpoint growth produces a loss of \(3/4\) of an angular derivative,
a Robin spectral lower scale of order \(-\mu^4\), and a logarithmic radial
scale of order \(\mu^2\).

Two quantitative mechanisms are developed. First, low angular frequencies
are controlled by an exponential-polynomial observability estimate that is
stable under measurable endpoint feedback. High frequencies are represented
by a Dirichlet Green kernel on a larger logarithmic cylinder. The interior
margin gives exponential decay of the homogeneous high-frequency component,
whereas the forced endpoint operator has norm \(O(\mu N^{-1/2})\). The
choice \(N\asymp1+\mu^2\) prevents cancellation between the low- and
high-frequency endpoint traces. In particular,
Theorem~\ref{thm:full-cylinder} yields constants \(d_*>1\), \(C,c>0\)
such that
\[
\|w(0)\|_{\mathcal H}^2
\le
\exp(CN)\|Cw\|_{L^2(J_{d_*})}^2
+Ce^{-cN}\mathcal M_{d_*}(w)^2,
\qquad N\ge C(1+\mu^2).
\]
Optimization in \(N\) gives an interpolation cost
\(\exp(C(1+\mu^2))\). Here ``observability'' is used in the
spectral-interpolation sense; the logarithmic radius is an auxiliary
elliptic variable rather than a physical evolution time.

Second, the half-density conjugation exposes the arithmetic lattice
\(\kappa_n=2n+\tfrac12\). A discrete Cauchy-transform estimate on the two
parity sublattices gives the critical boundary-resolvent bound
\[
\sup_{\xi\in\mathbb R}
\left\|
C\bigl((\tau+i\xi)^2-L_0\bigr)^{-1}C^*
\right\|
\le C\tau^{-1/2}
\]
for admissible \(\tau\). The measurable Robin term is therefore absorbable
when \(\tau\ge C(1+\mu^2)\). The resulting linear-weight Carleman estimate
gives quantitative inward propagation in logarithmic cylinders. A
fixed-scale boundary seed is propagated by explicit overlapping Grushin
balls, and the annular interpolation estimate transfers the resulting bulk
lower bound back to the boundary trace.

The contributions are therefore: a distributional formulation for bounded
solutions, including local \(H^{1/2}\)-regularity and a weak
quarter-Laplacian--Grushin--Robin reduction; the logarithmic-cylinder
formulation with explicit endpoint coefficients; the angular spectrum and
endpoint normalization; the critical \(3/4\)-trace inequality and the
\(-\mu^4\) Robin lower scale; high-frequency coercivity above
\(N\asymp1+\mu^2\); finite-dimensional observability stable under measurable
endpoint feedback; interior-cylinder interpolation with cost
\(\exp(C(1+\mu^2))\); an energy-class Fourier--resolvent representation and
the boundary-resolvent estimate without logarithmic loss; and a centered
quantitative vanishing estimate, propagated through a finite chain of
Grushin balls, which yields the global lower bound \(\exp(-CR\log R)\).

The argument differs from existing Carleman and frequency methods for
Baouendi--Grushin equations in \cite{GS,GV,BG16,BGM}. A quantitative Landis
estimate for a bulk degenerate Grushin equation was obtained by Chen, Liu and
Yang \cite{ChenLiuYang}. Recent work also includes higher-step Grushin
Carleman estimates \cite{DeBieLian}, quantitative continuation for
Baouendi--Grushin type operators \cite{SunLiu26}, and strong unique
continuation for the Baouendi operator with zero-order perturbations
\cite{BG26}; spectral inequalities on product cylinders are developed in
\cite{AS}. These results treat interior zero-order perturbations of
degenerate operators. Here the coefficient is transferred to the degeneracy
set as merely measurable Robin feedback, so the decisive estimate is instead
a boundary resolvent for the endpoint trace. Kow and Wang \cite{KW} use the
Caffarelli--Silvestre extension for a Landis problem involving the
half-Laplacian, whose harmonic-extension geometry does not contain the
critical Grushin--Robin mechanism considered here. Quantitative Robin
continuation for uniformly elliptic problems is treated in
\cite{Li,LiWang,DZ}; those results do not directly cover the present
degenerate endpoint-feedback setting.

\section{Main result and scaling reduction}
\label{sec:landis-reduction}

We first state the quantitative endpoint problem in the normalization used
throughout this paper.

\begin{theorem}[Critical quantitative Landis estimate]
\label{thm:quant-landis}
Let $V\in L^\infty(\mathbb R;\mathbb R)$ and
$u\in L^\infty(\mathbb R;\mathbb R)$. Assume that
\[
(-\Delta)^{1/4}u+Vu=0\quad\text{in }\mathcal S'(\mathbb R),
\qquad
\|V\|_{L^\infty(\mathbb R)}\le1,
\qquad
\|u\|_{L^\infty(\mathbb R)}\le C_0,
\]
together with
\[
\|u\|_{L^2(-1,1)}\ge1.
\]
Then there exist constants $C,R_0>0$, depending only on $C_0$, such
that for every $R\ge R_0$,
\begin{equation}\label{eq:global-landis-target}
\inf_{|x_0|=R}\|u\|_{L^\infty(x_0-1,x_0+1)}
\ge \exp(-CR\log R).
\end{equation}
\end{theorem}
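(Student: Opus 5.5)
The argument is the standard Landis rescaling, fed by a centered local quantitative vanishing estimate whose constant depends on the potential only through \(C\|q\|_\infty^2\). The section title (``scaling reduction'') indicates that this local estimate is what the rest of the paper establishes. Here I isolate it as a proposition and show how it yields \eqref{eq:global-landis-target}.

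\emph{Local estimate.} Let \(\tilde u\in L^\infty\) solve \((-\Delta)^{1/4}\tilde u+q\tilde u=0\) with \(\|\tilde u\|_\infty\le C_0\), \(\|q\|_\infty\le \mu\), and let \(\tilde u\) have non-negligible mass at a fixed scale: \(\|\tilde u\|_{L^2(B(\bar x,1/4))}\ge \delta\) for some \(\bar x\) with \(|\bar x|\le 1/2\). Then
\[
\|\tilde u\|_{L^\infty(-r,r)}\ge r^{C(1+\mu^2)}\,\delta^{C}\,e^{-C(1+\mu^2)},\qquad 0<r<r_0 .
\]
I would prove this on the extension side. Pass to the Caffarelli--Silvestre extension, substitute \(y=z^2/2\), and obtain the Grushin--Robin problem. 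In Grushin polar coordinates with logarithmic radius \(t=-\log\rho\), the boundary-resolvent bound \(C\tau^{-1/2}\) lets the measurable Robin feedback be absorbed once \(\tau\ge C(1+\mu^2)\). The linear-weight Carleman estimate then gives inward propagation over the logarithmic cylinder at the rate \(e^{C(1+\mu^2)t}\), that is, \(\rho^{C(1+\mu^2)}\). The bulk lower bound near \((\bar x,0)\) is carried to a fixed Grushin ball centered at the origin by the finite chain of overlapping Grushin balls. Each link costs \(e^{C(1+\mu^2)}\), and there are \(O(1)\) links. Finally, Theorem~\ref{thm:full-cylinder}, optimized at \(N\asymp 1+\mu^2\), transfers the bulk bound to the endpoint trace \(\tilde u\) at a cost \(\exp(C(1+\mu^2))\).

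\emph{Rescaling.} Fix \(x_0\) with \(|x_0|=R\) and set \(\tilde u(x)=u(x_0+2Rx)\). Since \((-\Delta)^{1/4}\) is homogeneous of degree \(1/2\), \(\tilde u\) solves the equation with \(q(x)=(2R)^{1/2}V(x_0+2Rx)\). Hence \(\mu=\|q\|_\infty\le (2R)^{1/2}\) and \(\mu^2\le 2R\); this is exactly where the critical exponent \(2\) on \(\mu\) produces the linear rate. The interval \((-1,1)\) is mapped to a subinterval of length \(1/R\) near \(\bar x=-x_0/(2R)\), which satisfies \(|\bar x|=1/2\). The mass condition is therefore only \(\|\tilde u\|_{L^2}\ge (2R)^{-1/2}\) on a tiny interval. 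To upgrade it to \(\delta\ge R^{-C}\) on a fixed ball \(B(\bar x,1/4)\), I would enlarge the interval, since \(\|\tilde u\|_{L^2(B(\bar x,1/4))}\ge \|\tilde u\|_{L^2(\text{image of }(-1,1))}\ge (2R)^{-1/2}\). The centered estimate is insensitive to this polynomial loss, because \(\delta^C=e^{-C\log R}\). Taking \(r=1/(2R)\), which corresponds to \((x_0-1,x_0+1)\), gives
\[
\|u\|_{L^\infty(x_0-1,x_0+1)}\ge (2R)^{-C(1+2R)}R^{-C}e^{-C(1+2R)}\ge \exp(-C'R\log R)
\]
for \(R\ge R_0\). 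The constants depend only on \(C_0\), since \(\|V\|_\infty\le1\) is built in.

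\emph{Main obstacle.} The delicate point is uniformity in the local estimate. Every constant must depend on \(\mu\) only through \(e^{C(1+\mu^2)}\) and \(\rho^{C(1+\mu^2)}\), with no solution-dependent frequency factor and no extra power of \(\mu\) in the exponent. A worse power, for example from a logarithmic loss in the resolvent bound, would produce \(R^{1+\epsilon}\) rates. The step I would check first is the seed: that \(L^2\) mass \(\ge R^{-C}\) on a boundary interval of length \(\sim 1/R\) yields a bulk lower bound on a fixed Grushin ball at cost only \(e^{C(1+\mu^2)}R^{C}\). I would try to get this from the weak extension energy bound, together with Caccioppoli inequalities and the Robin trace inequality with the \(-\mu^4\) lower scale. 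The fallback is to choose the seed ball scale \(\sim\mu^{-2}\) and absorb the extra chain length \(O(\log\mu)\) into the \(\log R\) factor.
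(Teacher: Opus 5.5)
Your argument follows the paper's route. Theorem~\ref{thm:scaling-reduction} performs the same Landis rescaling (by $R$ instead of $2R$, which is immaterial). Theorem~\ref{thm:centered-vanishing-proved} then proves the centered estimate by the chain you describe: boundary seed, a fixed-length gauge-ball chain (Lemma~\ref{lem:gauge-ball-propagation}), Carleman doubling inward, and annular interpolation back to the trace.

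\textbf{Correction to the local estimate.} As you state it, the exponent of $r$ is $C(1+\mu^2)$ and the normalization enters only as a factor $\delta^{C}$. That is not what the Carleman step gives.
\begin{itemize}
\item In Proposition~\ref{prop:uniform-annular-doubling} one must take $\tau\ge C(1+\mu^2+\Lambda)$, with $\Lambda=\log(eG/A)$, in order to absorb the outer commutator $e^{-2\tau}G^2$ into $\tau^2A^2$.
\item This $\tau$ then multiplies $|T|\asymp|\log r|$. Since only $\Lambda\le C(1+\mu^2+|\log\delta|)$ is available, the inward rate is not $\rho^{C(1+\mu^2)}$.
\item Compare harmonic polynomials of degree $n$: their off-center mass is $\sim c^n$, yet they vanish to order $n$ at the center. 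So the normalization must in general enter the exponent.
\end{itemize}
The correct form is $r^{C(1+\mu^2+|\log\delta|)}$, as in Definition~\ref{def:centered-vanishing}. It still suffices: with $|\log\delta|=\tfrac12\log(2R)$ and $r=(2R)^{-1}$, the extra term only costs a factor $\exp(-C(\log R)^2)$, which is absorbed into $\exp(-C'R\log R)$.

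\textbf{Your seed step is a genuine variant.} The paper (Lemmas~\ref{lem:quantitative-boundary-seed} and~\ref{lem:fixed-scale-lower}) argues as follows:
\begin{itemize}
\item It uses the interior H\"older bound $[v]_{C^\alpha}\le C(1+M)C_0$ to find a point with $|v(x_s)|\ge K/2$.
\item Positivity and unit mass of the quarter-Poisson kernel then give $W\ge c_0/8$ on a rectangle whose size is polynomial in $c_0/(1+M)$.
\item Hence $|\log B_0|\lesssim 1+\log(1+M)+|\log c_0|+\log(1+C_0)$.
\end{itemize}
Your primary plan bounds the boundary mass on a fixed boundary annulus by $C(1+\mu^4)$ times the bulk norm on a slightly larger Grushin annulus. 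This uses \eqref{eq:trace-epsilon} with $\varepsilon=1$ together with Lemma~\ref{lem:cylinder-caccioppoli}, after pigeonholing $B(\bar x,1/4)$ into $O(1)$ such annuli. It is valid, has the same polynomial cost, works directly with $L^2$ mass, and avoids the H\"older regularity input.

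Your fallback (seed scale $\mu^{-2}$ with $O(\log\mu)$ links) should be discarded. Each three-ball link multiplies the accumulated exponent by $\theta^{-1}$. So $O(\log\mu)$ links at the cost $e^{C(1+\mu^2)}$ you assign to each would produce an exponent of order $\mu^{2+c}$, which destroys the critical rate.
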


The following local statement is the exact quantitative input required by
the classical scaling mechanism.

\begin{definition}[Centered endpoint vanishing estimate]
\label{def:centered-vanishing}
We say that the centered endpoint estimate holds if there exist
\(c,C,r_0>0\), depending only on \(C_0\), with the following property. For every real-valued $v\in L^\infty(\mathbb R)$ satisfying
\[
(-\Delta)^{1/4}v+qv=0\quad\text{in }\mathcal S'(\mathbb R),
\]
and
\[
\|q\|_\infty\le M,\qquad
\|v\|_\infty\le C_0,\qquad
\|v\|_{L^2(-2,2)}\ge K\in(0,1],
\]
one has, for $0<r<r_0$,
\begin{equation}\label{eq:centered-vanishing}
\|v\|_{L^\infty(-r,r)}
\ge c\,r^{C(1+M^2+|\log K|)}.
\end{equation}
\end{definition}

\begin{theorem}[Sharp reduction to centered propagation]
\label{thm:scaling-reduction}
If the centered endpoint estimate \eqref{eq:centered-vanishing} holds, then
Theorem~\ref{thm:quant-landis} follows.
\end{theorem}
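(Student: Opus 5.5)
The proof is the Bourgain--Kenig rescaling, adapted to the order $1/2$ homogeneity of $(-\Delta)^{1/4}$. Fix $R\ge R_0$ and $x_0$ with $|x_0|=R$, and set $A:=R+1$. Define
\[
v(y):=u(x_0+Ay),\qquad q(y):=A^{1/2}V(x_0+Ay).
\]
The operator $(-\Delta)^{1/4}$ has Fourier symbol $|\xi|^{1/2}$, which is homogeneous of degree $1/2$. Hence, for the dilation-translation $T_Af(y)=f(x_0+Ay)$, one has $(-\Delta)^{1/4}(T_Af)=A^{1/2}\,T_A\bigl((-\Delta)^{1/4}f\bigr)$ in $\mathcal S'(\mathbb R)$.

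The first step is to verify this identity on Schwartz test functions and pass to tempered distributions by duality. The composition $T_A$ is an isomorphism of $\mathcal S$ and of $\mathcal S'$, and $u\in L^\infty\subset\mathcal S'$. The identity then gives
\[
(-\Delta)^{1/4}v+qv=0\quad\text{in }\mathcal S'(\mathbb R),
\qquad
\|q\|_\infty\le A^{1/2}=:M,\qquad \|v\|_\infty\le C_0 .
\]
So $v$ is an admissible function for Definition~\ref{def:centered-vanishing}, with $M^2=A$. Here $v$ is real-valued and $q$ is real-valued and bounded.

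The second step checks the mass condition. As $y$ ranges over $(-2,2)$, the point $x_0+Ay$ ranges over $(x_0-2A,x_0+2A)$. This interval contains $(-1,1)$ because $2A=2R+2>R+1$. Therefore
\[
\|v\|_{L^2(-2,2)}^2=A^{-1}\|u\|_{L^2(x_0-2A,x_0+2A)}^2\ge A^{-1}\|u\|_{L^2(-1,1)}^2\ge A^{-1}.
\]
Thus we may take $K:=A^{-1/2}\in(0,1]$, which gives $|\log K|=\tfrac12\log A$.

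The third step applies the centered estimate \eqref{eq:centered-vanishing} with $r:=1/A$. This requires $r<r_0$, which holds once $R_0\ge r_0^{-1}$. Since $T_A$ maps $(-r,r)$ onto $(x_0-1,x_0+1)$,
\[
\|u\|_{L^\infty(x_0-1,x_0+1)}=\|v\|_{L^\infty(-1/A,1/A)}\ge c\,A^{-C(1+A+\frac12\log A)}=c\exp\bigl(-C(1+A+\tfrac12\log A)\log A\bigr).
\]
With $A=R+1\le 2R$, the exponent is at most $C'R\log R$ for $R\ge R_0$, and the factor $c$ can be absorbed for $R_0$ large. All constants depend only on $C_0$, through the constants $c,C,r_0$ of the definition. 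Taking the infimum over the two points $x_0=\pm R$ yields \eqref{eq:global-landis-target}.

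The main point needing care is the first step: justifying the scaling law for $(-\Delta)^{1/4}$ on merely bounded $u$, in whatever sense the paper defines $(-\Delta)^{1/4}u\in\mathcal S'$ for $L^\infty$ functions. The symbol $|\xi|^{1/2}$ is not smooth at the origin, so if the operator is defined via a weighted $L^1$ pairing, for example $u\in L^1(\langle x\rangle^{-3/2}dx)$, I would check that this class and the pairing are invariant under $T_A$. In that case the identity follows by a change of variables in the singular-integral representation.

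The second point needing care is the bookkeeping that turns $M^2=A^{1/2\cdot2}=A$ into the linear factor in $R\log R$. This is precisely where the critical quadratic dependence $C(1+M^2)$ in Definition~\ref{def:centered-vanishing} is sharp for the target rate. Any worse power of $M$ would give a superlinear exponent.
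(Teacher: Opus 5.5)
Your proof is correct and follows the paper's argument: the same dilation $v(y)=u(x_0+Ay)$ and $q(y)=A^{1/2}V(x_0+Ay)$, applied to the centered estimate with $M=A^{1/2}$, $K=A^{-1/2}$, $r=A^{-1}$. The differences are cosmetic. You take $A=R+1$, whereas the paper takes $A=R$ and assumes $R\ge2$ to get $(-1,1)\subset x_0+R(-2,2)$. You also spell out the $\mathcal S'$ justification of the scaling identity and the requirement $R_0\ge r_0^{-1}$, both of which the paper leaves implicit under ``homogeneity'' and ``sufficiently large $R$''.
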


\begin{proof}
Fix $x_0$ with $|x_0|=R$ and define
\[
v_R(x)=u(x_0+Rx),
\qquad
q_R(x)=R^{1/2}V(x_0+Rx).
\]
The homogeneity of the quarter-Laplacian gives
\[
(-\Delta)^{1/4}v_R+q_Rv_R=0,
\qquad
\|q_R\|_\infty\le R^{1/2},
\qquad
\|v_R\|_\infty\le C_0.
\]
For $R\ge2$, the interval $x_0+R(-2,2)$ contains $(-1,1)$. Hence
\[
\|v_R\|_{L^2(-2,2)}^2
=R^{-1}\|u\|_{L^2(x_0+R(-2,2))}^2
\ge R^{-1},
\]
so the local normalization is valid with $K=R^{-1/2}$. Apply
\eqref{eq:centered-vanishing} with
\[
M=R^{1/2},\qquad K=R^{-1/2},\qquad r=R^{-1}.
\]
We obtain
\[
\begin{aligned}
\|u\|_{L^\infty(x_0-1,x_0+1)}
&=\|v_R\|_{L^\infty(-R^{-1},R^{-1})}\\
&\ge cR^{-C(1+R+\frac12\log R)}\\
&\ge \exp(-C_1R\log R)
\end{aligned}
\]
for all sufficiently large $R$. Taking the infimum over $|x_0|=R$ proves
\eqref{eq:global-landis-target}.
\end{proof}

\begin{remark}[The critical power of the potential]
Under the Landis rescaling, $M=\|q_R\|_\infty$ has size $R^{1/2}$.
Consequently, the local exponent $M^2$ becomes $R$, and evaluation at the
radius $R^{-1}$ produces $R\log R$. A larger power of $M$ would yield a
weaker global rate. The power $M^2$ agrees with the endpoint scaling
$M^{1/(2s)}$ at $s=1/4$.
\end{remark}

\section{Critical extension and Grushin reduction}

Let \(U(x,y)\) be the Caffarelli--Silvestre extension of \(u\). For
\(s=1/4\),
\begin{equation}\label{eq:extension}
\partial_y\!\left(y^{1/2}U_y\right)
+y^{1/2}U_{xx}=0,
\qquad y>0.
\end{equation}
Equivalently,
\[
U_{yy}+\frac{1}{2y}U_y+U_{xx}=0.
\]

Set
\[
y=\frac{z^2}{2},
\qquad
W(x,z)=U\!\left(x,\frac{z^2}{2}\right).
\]
A direct calculation gives
\[
U_y=\frac1zW_z,
\qquad
U_{yy}=\frac1{z^2}W_{zz}-\frac1{z^3}W_z,
\]
and hence
\begin{equation}\label{eq:grushin}
\BB W:=W_{zz}+z^2W_{xx}=0,
\qquad z>0.
\end{equation}

\begin{proposition}[Exact weak boundary reduction]\label{prop:reduction}
Let \(U\) be the Caffarelli--Silvestre extension of a bounded distributional
solution of \eqref{eq:frac-sch}, and set
\(W(x,z)=U(x,z^2/2)\). Then \(W(\cdot,0)=u\),
\(W_{zz}+z^2W_{xx}=0\) in \(z>0\), and the Robin condition is the weak
conormal identity
\begin{equation}\label{eq:weak-robin-reduction}
\int_{z>0}\left(W_z\Psi_z+z^2W_x\Psi_x\right)\,dx\,dz
=-\int_{\mathbb R}b(x)W(x,0)\Psi(x,0)\,dx
\end{equation}
for every \(\Psi\in C_c^\infty(\overline{\mathbb R^2_+})\), where
\[
b=\kappa_{1/4}V,
\qquad
\kappa_{1/4}=\frac{\sqrt2}{d_{1/4}}>0.
\]
Equivalently,
\[
-\partial_zW\big|_{z=0}=-bW(\cdot,0)
\]
in the conormal distributional sense (and in the local negative trace space
once the finite-energy realization is used). Conversely, if \(W\) is the
finite-energy Caffarelli--Silvestre extension of its trace and satisfies
\eqref{eq:weak-robin-reduction}, then the trace satisfies
\eqref{eq:frac-sch} distributionally.
\end{proposition}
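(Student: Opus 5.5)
The plan is to start from the weak Caffarelli--Silvestre characterization for $s=1/4$ and transport it through $y=z^2/2$. The standard Dirichlet-to-Neumann identity states that for $u$ in the appropriate weighted space, and in particular for bounded $u$ via the Poisson-kernel representation $U=P_{1/4}(\cdot,y)*u$,
\[
-\lim_{y\to0}y^{1/2}U_y=d_{1/4}(-\Delta)^{1/4}u ,
\]
with the weak form
\[
\int_{y>0}y^{1/2}\bigl(U_y\Phi_y+U_x\Phi_x\bigr)\,dx\,dy=d_{1/4}\langle(-\Delta)^{1/4}u,\Phi(\cdot,0)\rangle
\]
for $\Phi\in C_c^\infty(\overline{\R^2_+})$. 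This identity is first justified for Schwartz $u$ by Fourier transform in $x$, using the explicit Bessel-type profile $\hat U(\xi,y)=\hat u(\xi)\vartheta(|\xi|y)$. It is then extended to bounded $u$ by duality: $\langle(-\Delta)^{1/4}u,\varphi\rangle=\langle u,(-\Delta)^{1/4}\varphi\rangle$, and $(-\Delta)^{1/4}\varphi$ decays like $|x|^{-3/2}$, which is integrable against $L^\infty$. The paper mentions local $H^{1/2}$ regularity, so I would also use that $U\in H^1_{loc}(\overline{\R^2_+},y^{1/2})$. This follows from splitting $u=\chi u+(1-\chi)u$: the near part has finite energy, and the far part has a smooth extension up to $y=0$ near the support of $\Phi$.

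Next comes the change of variables. With $y=z^2/2$ we have $dy=z\,dz$ and $y^{1/2}=z/\sqrt2$, while $U_y=W_z/z$ and $U_x=W_x$. Setting $\Psi(x,z)=\Phi(x,z^2/2)$ gives $\Phi_y=\Psi_z/z$. The left-hand side then becomes
\[
\int\frac{z}{\sqrt2}\Bigl(\frac{W_z\Psi_z}{z^2}+W_x\Psi_x\Bigr)z\,dz\,dx=\frac1{\sqrt2}\int\bigl(W_z\Psi_z+z^2W_x\Psi_x\bigr)\,dx\,dz .
\]
Substituting $(-\Delta)^{1/4}u=-Vu$ yields \eqref{eq:weak-robin-reduction} with $b=\sqrt2V/d_{1/4}$, and $W(\cdot,0)=U(\cdot,0)=u$.

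One point needs care: a general $\Psi\in C_c^\infty(\overline{\R^2_+})$ in $z$ is not of the form $\Phi(x,z^2/2)$ with $\Phi$ smooth, because odd powers of $z$ become $y^{1/2}$-type functions. I would handle this by density. It suffices that the $y$-weak identity holds for test functions $\Phi$ that are Lipschitz in $(x,y^{1/2})$ with compact support, which is exactly the energy class $H^1(y^{1/2})$ restricted to compact support. I would then approximate $\Psi$ in the weighted energy norm $\int(\Psi_z^2+z^2\Psi_x^2)$, together with the trace at $z=0$, by such functions. This works because the energy form is continuous in that norm and the trace term is continuous since $bW(\cdot,0)\in L^\infty_{loc}$. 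The interior equation $W_{zz}+z^2W_{xx}=0$ follows from \eqref{eq:grushin} or from the weak identity with $\Psi$ supported in $z>0$.

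For the converse, I would reverse the computation. If $W$ is the finite-energy extension of its trace $u$ and satisfies \eqref{eq:weak-robin-reduction}, then pulling back gives the $y$-weak identity with right side $-\frac{1}{\sqrt2}\int bu\,\Phi(\cdot,0)$. The Dirichlet-to-Neumann identity identifies the left side with $\frac{1}{\sqrt2}d_{1/4}\langle(-\Delta)^{1/4}u,\Phi(\cdot,0)\rangle$. Since every $\varphi\in C_c^\infty(\R)$ is the trace of some $\Phi$, this gives $(-\Delta)^{1/4}u+Vu=0$ in $\mathcal D'$.

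I expect the main obstacle to be the justification of the weak Neumann identity for merely bounded, non-decaying $u$. For such $u$ the extension has infinite global energy, and the boundary term must be read in $\mathcal S'$ while testing against compactly supported $\Phi$. I would handle this by the near/far splitting described above, verifying directly from the Poisson kernel that the far part contributes a smooth function whose conormal derivative matches $d_{1/4}(-\Delta)^{1/4}((1-\chi)u)$ pointwise near $\operatorname{supp}\varphi$.
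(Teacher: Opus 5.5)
Your argument is essentially the paper's. It combines the weighted Dirichlet-to-Neumann identity for finite-energy data with the splitting $u=\chi u+(1-\chi)u$, where local $H^{1/2}$ regularity makes the near part finite-energy. It then uses the substitution $y=z^2/2$ and admits the non-smooth test $\Phi(x,y)=\Psi(x,\sqrt{2y})$ by density in the weighted energy norm. The converse pulls back smooth $y$-tests; for these $\Phi(x,z^2/2)$ is already smooth, so no density is needed there.

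The only step handled differently is the far field. In Proposition~\ref{prop:extension-realization} the paper truncates a second time, $u_R=\chi_Ru\in H^{1/2}$, and applies the global energy identity to $u_R$. It then lets $R\to\infty$, using $R$-uniform Poisson-kernel bounds on $I\times(0,Y)$ for the bulk pairing and $(-\Delta)^{1/4}\phi\in L^1$ for the boundary pairing. This way the far-field constant comes out correct automatically. You instead treat $(1-\chi)u$ directly: you integrate by parts on $\{y>\epsilon\}$ and identify $-\lim_{y\to0}y^{1/2}\partial_yU_2$ with the singular-integral formula for $(-\Delta)^{1/4}((1-\chi)u)$ off its support. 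This avoids the second limit, but you then have to verify that pointwise identity, constant included.

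Your statement that the far part is smooth up to $y=0$ is inaccurate in the $y$ variable. Near boundary points outside $\operatorname{supp}(1-\chi)u$, $U_2$ is $y^{1/2}$ times a smooth function; it is smooth in $z$, and if it were smooth in $y$ its conormal derivative would vanish. What you actually need is continuity of $y^{1/2}\partial_yU_2$ up to $y=0$ and the bound $y^{1/2}|\nabla U_2|^2\lesssim y^{-1/2}$, and both hold. The local $H^{1/2}$ regularity you import is genuinely needed, since a merely bounded $\chi u$ need not lie in $H^{1/4}$. The paper proves it from the commutator bound $\|[(-\Delta)^{1/4},\chi]u\|_{L^2}\le C_\chi\|u\|_\infty$ in Lemma~\ref{lem:local-fractional-regularity}.

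Finally, your constants are inconsistent. With your normalization $-\lim y^{1/2}U_y=d_{1/4}(-\Delta)^{1/4}u$, the computation gives $b=\sqrt2\,d_{1/4}V$, not $\sqrt2V/d_{1/4}$. To obtain $\kappa_{1/4}=\sqrt2/d_{1/4}$ you need the paper's convention $(-\Delta)^{1/4}u=-d_{1/4}\lim y^{1/2}U_y$, which puts the factor $d_{1/4}^{-1}$ in the weak identity. The same slip misplaces the factor $1/\sqrt2$ in your converse.
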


\begin{proof}
For finite-energy data, the Dirichlet-to-Neumann identity reads
\[
(-\Delta)^{1/4}u
=-d_{1/4}\lim_{y\downarrow0}y^{1/2}U_y
=-\frac{d_{1/4}}{\sqrt2}\,\partial_zW\big|_{z=0}
\]
in the dual trace space. Testing the weighted extension equation and using
\((-\Delta)^{1/4}u=-Vu\) gives \eqref{eq:weak-robin-reduction} after the
change \(y=z^2/2\). The converse follows by reversing the same identity.
For merely bounded data, the truncation and local-energy passage establishing
this formula are given in Proposition~\ref{prop:extension-realization}.
\end{proof}

\section{Grushin polar coordinates and the angular operator}

Introduce
\[
z=\rho\sqrt{\sin\varphi},
\qquad
x=\frac{\rho^2}{2}\cos\varphi,
\qquad
\rho>0,\quad 0<\varphi<\pi.
\]
Then
\[
\rho=(z^4+4x^2)^{1/4}.
\]

\begin{proposition}[Polar decomposition]\label{prop:polar}
For every smooth function supported away from the origin,
\[
\BB
=
\sin\varphi\left[
\partial_{\rho\rho}
+\frac2\rho\partial_\rho
+\frac4{\rho^2}
\left(
\partial_{\varphi\varphi}
+\frac12\cot\varphi\,\partial_\varphi
\right)
\right].
\]
\end{proposition}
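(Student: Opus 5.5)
The proof is a direct change of variables. I would write the Grushin operator through the chain rule in the general second-order form
\[
\BB = a\,\partial_{\rho\rho} + 2b\,\partial_{\rho\varphi} + c\,\partial_{\varphi\varphi} + (\BB\rho)\,\partial_\rho + (\BB\varphi)\,\partial_\varphi,
\]
with
\[
a=\rho_z^2+z^2\rho_x^2,\qquad b=\rho_z\varphi_z+z^2\rho_x\varphi_x,\qquad c=\varphi_z^2+z^2\varphi_x^2.
\]
This holds for any smooth function supported away from the origin, since there \((\rho,\varphi)\) is a smooth diffeomorphism onto \((0,\infty)\times(0,\pi)\). It then suffices to compute five scalar quantities and check
\[
a=\sin\varphi,\qquad b=0,\qquad c=\frac{4\sin\varphi}{\rho^2},\qquad \BB\rho=\frac{2\sin\varphi}{\rho},\qquad \BB\varphi=\frac{2\cos\varphi}{\rho^2}.
\]

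For the first derivatives I would use the implicit relations \(\rho^4=z^4+4x^2\), \(\sin\varphi=z^2/\rho^2\) and \(\cos\varphi=2x/\rho^2\). They give
\[
\rho_z=\frac{z^3}{\rho^3},\qquad \rho_x=\frac{\cos\varphi}{\rho}.
\]
Then \(a=\sin^3\varphi+\sin\varphi\cos^2\varphi=\sin\varphi\).

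For the angle I would differentiate \(\tan\varphi=z^2/(2x)\), or equivalently \(\cos\varphi=2x\rho^{-2}\). This yields
\[
\varphi_z=\frac{2\cos\varphi}{\rho\sqrt{\sin\varphi}},\qquad \varphi_x=-\frac{2\sin\varphi}{\rho^2}.
\]
Substituting, \(b=\frac{2\sin\varphi\cos\varphi}{\rho^2}-\frac{2\sin\varphi\cos\varphi}{\rho^2}=0\), which is the orthogonality of the coordinates. Likewise \(c=\frac{4\cos^2\varphi}{\rho^2}+\frac{4\sin\varphi\cdot\sin^2\varphi}{\rho^2}\cdot\frac{\rho^2}{\rho^2}\); with \(z^2=\rho^2\sin\varphi\) this collapses to \(4\sin\varphi/\rho^2\).

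For \(\BB\rho\) I expect a short computation: \(\partial_z(z^3\rho^{-3})\) and \(z^2\partial_x(2x\rho^{-3})\) combine, using \(\rho_z,\rho_x\), into \(3z^2\rho^{-3}+2z^2\rho^{-3}-3\rho^{-4}(z^3\rho_z+2xz^2\rho_x)\). The bracket equals \(\rho^{-3}(z^6+4x^2z^2)=z^2\rho\), so \(\BB\rho=2z^2/\rho^3=2\sin\varphi/\rho\).

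The step I expect to be the main obstacle is \(\BB\varphi\), because of the square-root singularity in \(\varphi_z\). To avoid it I would apply \(\BB\) to the function \(\cos\varphi=2x\rho^{-2}\) rather than to \(\varphi\) itself, and then use
\[
\BB(\cos\varphi)=-\sin\varphi\,\BB\varphi-\cos\varphi\,c.
\]
Since \(2x\) is annihilated by \(\BB\), the product rule gives
\[
\BB(2x\rho^{-2})=2x\,\BB(\rho^{-2})+2\cdot 2z^2\partial_x(2x)\,\partial_x(\rho^{-2})/2,
\]
which reduces everything to the already known quantities \(\rho_z,\rho_x,\BB\rho\). The expected outcome is \(-6\sin\varphi\cos\varphi/\rho^2\), which forces \(\BB\varphi=2\cos\varphi/\rho^2\).

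Collecting the five quantities,
\[
\BB=\sin\varphi\,\partial_{\rho\rho}+\frac{2\sin\varphi}{\rho}\partial_\rho+\frac{4\sin\varphi}{\rho^2}\partial_{\varphi\varphi}+\frac{2\cos\varphi}{\rho^2}\partial_\varphi .
\]
Factoring out \(\sin\varphi\) turns the last term into \(\frac{4}{\rho^2}\cdot\frac12\cot\varphi\,\partial_\varphi\), which is the claimed formula.

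As a consistency check, \(\BB\) should be homogeneous of degree \(-2\) under the dilation \((x,z)\mapsto(\lambda^2x,\lambda z)\), whose Euler field is \(\rho\partial_\rho=z\partial_z+2x\partial_x\). The formula has this property.
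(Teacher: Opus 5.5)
\textbf{Error in $\varphi_z$.} Your framework matches the paper's. The paper writes $\partial_z,\partial_x$ in $(\rho,\varphi)$ and substitutes, and your coefficients $a,b,c,\mathcal B\rho,\mathcal B\varphi$ simply organize that same computation. Your target values are right, and so are your derivations of $a$ and $\mathcal B\rho$.

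The genuine error is that your $\varphi_z$ is wrong by a factor of $\sin\varphi$. Differentiating $\cos\varphi=2x\rho^{-2}$ in $z$ gives $-\sin\varphi\,\varphi_z=-4x\rho^{-3}\rho_z=-2\cos\varphi\sin^{3/2}\varphi/\rho$. Hence
\[
\varphi_z=\frac{2\cos\varphi\sqrt{\sin\varphi}}{\rho}=\frac{4xz}{\rho^4},
\]
not $2\cos\varphi/(\rho\sqrt{\sin\varphi})$.

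\textbf{Consequence for $b$ and $c$.} With your value, the two decisive identities fail:
\begin{itemize}
\item $\rho_z\varphi_z=2\sin\varphi\cos\varphi/\rho$ while $z^2\rho_x\varphi_x=-2\sin^2\varphi\cos\varphi/\rho$, so $b=2\sin\varphi\cos\varphi(1-\sin\varphi)/\rho\neq0$.
\item $c=\frac{4}{\rho^2}\bigl(\cos^2\varphi/\sin\varphi+\sin^3\varphi\bigr)\neq4\sin\varphi/\rho^2$.
\end{itemize}
The cancellations you display do not follow from your own formulas. Your $b$ line has $\rho^{-2}$ where $\rho^{-1}$ belongs. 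For $c$, $4\cos^2\varphi+4\sin^3\varphi$ is not $4\sin\varphi$; for example, as $\varphi\to0$ the left side tends to $4$ and the right side to $0$. The vanishing of the mixed term and the value of the angular coefficient are exactly what the proposition asserts, so these steps must be redone. Your derivation of $\mathcal B\varphi$ through $\mathcal B(\cos\varphi)$ inherits the problem, because it uses $c$.

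\textbf{Repair.} With the corrected $\varphi_z$, everything goes through:
\begin{itemize}
\item $\rho_z\varphi_z=2\sin^2\varphi\cos\varphi/\rho=-z^2\rho_x\varphi_x$, so $b=0$.
\item $c=(4\sin\varphi\cos^2\varphi+4\sin^3\varphi)/\rho^2=4\sin\varphi/\rho^2$.
\item Your $\mathcal B(\cos\varphi)$ computation is then valid. It uses $\mathcal B(\rho^{-2})=2\sin\varphi\,\rho^{-4}$ and $\partial_x\rho^{-2}=-2\cos\varphi\,\rho^{-4}$, gives $-6\sin\varphi\cos\varphi/\rho^2$, and correctly yields $\mathcal B\varphi=2\cos\varphi/\rho^2$.
\end{itemize}

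The square-root singularity you planned around is an artifact of the error. The quantities $\varphi_z=4xz\rho^{-4}$ and $\varphi_x=-2z^2\rho^{-4}$ are smooth away from the origin. So $\mathcal B\varphi$ can be computed directly in two lines: $\mathcal B\varphi=(4x\rho^{-4}-16xz^4\rho^{-8})+z^2\cdot16xz^2\rho^{-8}=4x\rho^{-4}=2\cos\varphi/\rho^2$.
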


\begin{proof}
The inverse coordinate derivatives are
\[
\rho_z=\sin^{3/2}\varphi,
\qquad
\rho_x=\frac{\cos\varphi}{\rho},
\]
and
\[
\varphi_z=\frac{2\sqrt{\sin\varphi}\cos\varphi}{\rho},
\qquad
\varphi_x=-\frac{2\sin\varphi}{\rho^2}.
\]
Consequently,
\[
\partial_z
=
\sin^{3/2}\varphi\,\partial_\rho
+
\frac{2\sqrt{\sin\varphi}\cos\varphi}{\rho}\,
\partial_\varphi,
\]
and
\[
\partial_x
=
\frac{\cos\varphi}{\rho}\,\partial_\rho
-
\frac{2\sin\varphi}{\rho^2}\,\partial_\varphi.
\]
Substituting these identities into
\(\BB=\partial_z^2+z^2\partial_x^2\), using
\(z^2=\rho^2\sin\varphi\), and collecting the radial, angular,
and mixed terms gives the asserted formula; the mixed
\(\partial_{\rho\varphi}\)-terms cancel identically.
\end{proof}

Set
\[
\omega(\varphi)=(\sin\varphi)^{1/2},
\qquad
\HH=L^2((0,\pi),\omega(\varphi)\dd\varphi),
\]
and define the densely defined quadratic form
\[
\mathfrak a_0[f,g]
=
4\int_0^\pi
\omega(\varphi)f'(\varphi)\overline{g'(\varphi)}\dd\varphi.
\]
Its form domain is
\[
\mathcal V
=
\left\{
 f\in\HH:
 f\in H^1_{\mathrm{loc}}(0,\pi),\quad
 \int_0^\pi\omega|f'|^2\dd\varphi<\infty
\right\}.
\]
The form is closed, nonnegative, and defines a self-adjoint Friedrichs
realization, denoted by \(A_0\), of
\begin{equation}\label{eq:A0}
A_0f
=
-4\left(
 f''+\frac12\cot\varphi\,f'
\right)
=
-4\omega^{-1}(\omega f')'.
\end{equation}
The natural endpoint condition is the vanishing of the weighted flux
\(\omega f'\) at \(0\) and \(\pi\), whenever the latter is defined
classically.

\begin{proposition}[Angular spectrum and exact normalization]
\label{prop:spectrum}
Let \(C_n^{1/4}\) denote the Gegenbauer polynomial of degree \(n\) and
parameter \(1/4\). Define
\[
\mathfrak h_n
=
\frac{\pi\sqrt2\,\Gamma(n+\tfrac12)}
{\Gamma(n+1)(n+\tfrac14)\Gamma(\tfrac14)^2}
\]
and
\[
\Phi_n(\varphi)
=
\mathfrak h_n^{-1/2}C_n^{1/4}(\cos\varphi).
\]
Then \(\{\Phi_n\}_{n\ge0}\) is an orthonormal basis of \(\HH\), and
\[
A_0\Phi_n=2n(2n+1)\Phi_n.
\]
Consequently, if
\[
T=\left(A_0+\frac14I\right)^{1/2}-\frac12I,
\]
then
\[
T\Phi_n=2n\Phi_n.
\]
\end{proposition}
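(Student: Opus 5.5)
The plan is to pass to the variable \(t=\cos\varphi\) and identify \(A_0\) with a Gegenbauer operator. Since \(\omega\,d\varphi=(\sin\varphi)^{1/2}d\varphi=(1-t^2)^{-1/4}dt\), the map \(f\mapsto f\circ\arccos\) is a unitary isomorphism from \(\HH\) onto \(L^2((-1,1),(1-t^2)^{\lambda-1/2}dt)\) with \(\lambda=1/4\). This is exactly the orthogonality weight of the Gegenbauer polynomials \(C_n^{1/4}\). A direct chain-rule computation with \(d/d\varphi=-\sin\varphi\,d/dt\) gives
\[
f''+\tfrac12\cot\varphi\,f'=(1-t^2)g''-\tfrac32 t\,g',\qquad g(t)=f(\cos\varphi).
\]
Gegenbauer's equation \((1-t^2)g''-(2\lambda+1)tg'+n(n+2\lambda)g=0\) with \(\lambda=1/4\) reads \((1-t^2)g''-\tfrac32tg'=-n(n+\tfrac12)g\). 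Hence \(-4(\cdot)\) acting on \(C_n^{1/4}(\cos\varphi)\) produces the eigenvalue \(4n(n+\tfrac12)=2n(2n+1)\).

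Orthonormality and completeness then follow from the classical theory. The standard norm formula
\[
\int_{-1}^1 (C_n^{\lambda})^2(1-t^2)^{\lambda-1/2}dt=\frac{\pi 2^{1-2\lambda}\Gamma(n+2\lambda)}{n!(n+\lambda)\Gamma(\lambda)^2}
\]
at \(\lambda=1/4\) gives \(2^{1/2}\pi\Gamma(n+\tfrac12)/(\Gamma(n+1)(n+\tfrac14)\Gamma(\tfrac14)^2)=\mathfrak h_n\), so the \(\Phi_n\) are normalized. Polynomials are dense in the weighted \(L^2\) space on a compact interval with an integrable weight, by Weierstrass approximation and density of continuous functions. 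Therefore \(\{\Phi_n\}\) is an orthonormal basis of \(\HH\).

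The step needing care is showing that each \(\Phi_n\) actually lies in the domain of the Friedrichs realization \(A_0\), rather than merely satisfying the ODE formally. First, \(\Phi_n\in\mathcal V\): it is smooth on \([0,\pi]\), and \(\int\omega|\Phi_n'|^2<\infty\). Next, for any \(g\in\mathcal V\), integrate by parts on \((\epsilon,\pi-\epsilon)\):
\[
\mathfrak a_0[\Phi_n,g]=\int_\epsilon^{\pi-\epsilon}(A_0\Phi_n)\bar g\,\omega\,d\varphi+4\bigl[\omega\Phi_n'\bar g\bigr]_\epsilon^{\pi-\epsilon}.
\]
The boundary term must vanish as \(\epsilon\to0\). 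We have \(\omega\Phi_n'=O(\varphi^{3/2})\) near \(0\), while \(g\in\mathcal V\) satisfies \(|g(\varphi)|\lesssim 1+\varphi^{1/4}\bigl(\int\omega|g'|^2\bigr)^{1/2}\) by Cauchy--Schwarz with weight \(\omega^{-1}\), which is integrable. So \(g\) stays bounded and the boundary term tends to \(0\); the same holds at \(\pi\). This yields \(\mathfrak a_0[\Phi_n,g]=\langle 2n(2n+1)\Phi_n,g\rangle_\HH\) for all \(g\in\mathcal V\). By the representation theorem for closed forms, \(\Phi_n\in D(A_0)\) with the stated eigenvalue.

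Since the \(\Phi_n\) form a complete orthonormal eigenbasis, \(A_0\) is diagonal in this basis. The functional calculus then gives \((A_0+\tfrac14)^{1/2}\Phi_n=\sqrt{4n^2+2n+\tfrac14}\,\Phi_n=(2n+\tfrac12)\Phi_n\), and hence \(T\Phi_n=2n\Phi_n\).
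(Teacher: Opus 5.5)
Your proposal is correct and follows the paper's route: the substitution $s=\cos\varphi$ turns $A_0$ into the Gegenbauer operator with $\lambda=\tfrac14$, the classical norm formula gives $\mathfrak h_n$, completeness comes from density of polynomials in the weighted space, and $2n(2n+1)+\tfrac14=(2n+\tfrac12)^2$ gives the formula for $T$. Your extra check that $\omega\Phi_n'=O(\varphi^{3/2})$ at the endpoints, so that $\Phi_n$ lies in the domain of the operator associated with the form on $\mathcal V$ by the representation theorem, fills in a step the paper leaves implicit; one small fix is that the local bound on $g$ should read $|g(\varphi)|\le|g(\varphi_1)|+C(\int\omega|g'|^2)^{1/2}$ rather than use a bare constant $1$, which is harmless because only boundedness of $g$ near the endpoints is used.
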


\begin{proof}
With \(s=\cos\varphi\), the unitary change of variables
\[
F(s)=f(\arccos s)
\]
maps \(\HH\) onto
\[
L^2\bigl((-1,1),(1-s^2)^{-1/4}\dd s\bigr).
\]
Moreover,
\[
f''+\frac12\cot\varphi\,f'
=
(1-s^2)F''-\frac32sF'.
\]
The Gegenbauer equation with parameter \(\lambda=1/4\) is
\[
(1-s^2)Y''-\frac32sY'
+n\left(n+\frac12\right)Y=0.
\]
Therefore
\[
A_0C_n^{1/4}(\cos\varphi)
=4n\left(n+\frac12\right)C_n^{1/4}(\cos\varphi)
=2n(2n+1)C_n^{1/4}(\cos\varphi).
\]
The classical Gegenbauer orthogonality formula \cite{DLMF,Szego} gives
\[
\int_{-1}^{1}
(1-s^2)^{-1/4}
\bigl(C_n^{1/4}(s)\bigr)^2\dd s
=
\mathfrak h_n,
\]
while distinct degrees are orthogonal. Completeness follows from the
completeness of the Gegenbauer system for the weight
\((1-s^2)^{-1/4}\). The assertion for \(T\) follows from
\[
2n(2n+1)+\frac14
=\left(2n+\frac12\right)^2.
\]
\end{proof}

\section{Endpoint growth and the critical trace inequality}

The parity identity for Gegenbauer polynomials gives
\[
\Phi_n(\pi)=(-1)^n\Phi_n(0).
\]

\begin{proposition}[Exact endpoint formula]\label{prop:endpoint-growth}
For every \(n\ge0\),
\begin{equation}\label{eq:endpoint-exact}
|\Phi_n(0)|^2
=|\Phi_n(\pi)|^2
=
\frac{\Gamma(\tfrac14)^2}{\pi^2\sqrt2}
\left(n+\frac14\right)
\frac{\Gamma(n+\tfrac12)}{\Gamma(n+1)}.
\end{equation}
In particular, there exist constants \(0<c<C<\infty\) such that
\begin{equation}\label{eq:endpoint-growth}
c(n+1)^{1/2}
\le
|\Phi_n(0)|^2+|\Phi_n(\pi)|^2
\le
C(n+1)^{1/2}
\end{equation}
for all \(n\ge0\).
\end{proposition}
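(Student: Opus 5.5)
The plan is to evaluate the Gegenbauer polynomial explicitly at the endpoints and then divide by the normalization constant \(\mathfrak h_n\) from Proposition~\ref{prop:spectrum}. Since \(\varphi=0\) corresponds to \(s=\cos\varphi=1\) and \(\varphi=\pi\) to \(s=-1\), we need \(C_n^{1/4}(\pm1)\).

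First, we use the classical value
\[
C_n^{\lambda}(1)=\frac{\Gamma(n+2\lambda)}{\Gamma(2\lambda)\,\Gamma(n+1)} ,
\]
together with the parity \(C_n^\lambda(-s)=(-1)^nC_n^\lambda(s)\). This parity gives \(\Phi_n(\pi)=(-1)^n\Phi_n(0)\), so the two endpoint values have equal modulus. With \(\lambda=1/4\) this yields
\[
C_n^{1/4}(1)=\frac{\Gamma(n+\tfrac12)}{\Gamma(\tfrac12)\Gamma(n+1)}=\frac{\Gamma(n+\tfrac12)}{\sqrt\pi\,\Gamma(n+1)}.
\]

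Next, we compute
\[
|\Phi_n(0)|^2=\mathfrak h_n^{-1}\,C_n^{1/4}(1)^2
=\frac{\Gamma(n+1)(n+\tfrac14)\Gamma(\tfrac14)^2}{\pi\sqrt2\,\Gamma(n+\tfrac12)}\cdot\frac{\Gamma(n+\tfrac12)^2}{\pi\,\Gamma(n+1)^2}.
\]
After cancellation this is exactly \eqref{eq:endpoint-exact}. This is the only step needing care, since the constant in \(\mathfrak h_n\) must match the standard formula \(h_n=\pi 2^{1-2\lambda}\Gamma(n+2\lambda)/\bigl(n!(n+\lambda)\Gamma(\lambda)^2\bigr)\) at \(\lambda=1/4\). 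That formula indeed gives \(\pi\sqrt2\,\Gamma(n+\tfrac12)/\bigl(\Gamma(n+1)(n+\tfrac14)\Gamma(\tfrac14)^2\bigr)\), in agreement with the definition of \(\mathfrak h_n\).

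For \eqref{eq:endpoint-growth}, we use Gautschi's inequality, or equivalently the asymptotic \(\Gamma(n+\tfrac12)/\Gamma(n+1)\sim n^{-1/2}\), which gives
\[
\frac{\Gamma(n+\tfrac12)}{\Gamma(n+1)}\asymp (n+1)^{-1/2}
\]
uniformly for \(n\ge0\). The \(n=0\) value \(\sqrt\pi\) is positive, so the comparison also holds at the start. Multiplying by \(n+\tfrac14\asymp n+1\) then gives \(|\Phi_n(0)|^2\asymp(n+1)^{1/2}\). Doubling this to account for both endpoints yields \eqref{eq:endpoint-growth} with explicit constants.
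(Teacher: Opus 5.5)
Your proposal is correct and follows essentially the same route as the paper: you evaluate $C_n^{1/4}(1)$ by the classical endpoint formula, use parity for $\varphi=\pi$, divide the square by $\mathfrak h_n$, and conclude from the asymptotics of $\Gamma(n+\tfrac12)/\Gamma(n+1)$. Two minor refinements over the paper are harmless: you cross-check $\mathfrak h_n$ against the standard Gegenbauer normalization, and you use Gautschi's inequality (with $n=0$ handled separately) where the paper uses the asymptotic expansion and absorbs finitely many $n$ into the constants.
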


\begin{proof}
The classical endpoint value is
\[
C_n^{\lambda}(1)
=
\frac{\Gamma(n+2\lambda)}
{\Gamma(2\lambda)\Gamma(n+1)}.
\]
For \(\lambda=1/4\),
\[
C_n^{1/4}(1)
=
\frac{\Gamma(n+\tfrac12)}
{\sqrt\pi\,\Gamma(n+1)}.
\]
Dividing its square by \(\mathfrak h_n\) yields
\eqref{eq:endpoint-exact}. Finally,
\[
\frac{\Gamma(n+\tfrac12)}{\Gamma(n+1)}
=n^{-1/2}\bigl(1+O(n^{-1})\bigr),
\]
and the finitely many small values of \(n\) may be absorbed into the
constants, proving \eqref{eq:endpoint-growth}.
\end{proof}

\begin{theorem}[Critical spectral trace inequality]\label{thm:trace}
Every \(f\in\mathcal V=D(A_0^{1/2})\) has well-defined endpoint traces and
satisfies
\begin{equation}\label{eq:trace}
|f(0)|^2+|f(\pi)|^2
\le
C\left(
\mathfrak a_0[f,f]^{3/4}
\|f\|_{\HH}^{1/2}
+
\|f\|_{\HH}^2
\right).
\end{equation}
Equivalently, for every \(\varepsilon\in(0,1]\),
\begin{equation}\label{eq:trace-epsilon}
|f(0)|^2+|f(\pi)|^2
\le
C\left(
\varepsilon\mathfrak a_0[f,f]
+
\varepsilon^{-3}\|f\|_{\HH}^2
\right).
\end{equation}
\end{theorem}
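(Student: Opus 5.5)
We expand $f$ in the orthonormal basis of Proposition~\ref{prop:spectrum}: $f=\sum_n f_n\Phi_n$ with $f_n=\langle f,\Phi_n\rangle_{\HH}$. We then have $\|f\|_{\HH}^2=\sum|f_n|^2$ and $\mathfrak a_0[f,f]=\sum\lambda_n|f_n|^2$ with $\lambda_n=2n(2n+1)$. The plan is to define the endpoint traces by the series $f(0)=\sum f_n\Phi_n(0)$ and $f(\pi)=\sum f_n\Phi_n(\pi)$, and to show these converge absolutely for $f\in\mathcal V$. We must also check that they coincide with the actual limits of $f$ at the endpoints. For that, note that on the partial sums (polynomials in $\cos\varphi$) evaluation is classical. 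The bound below shows that evaluation at $0$ and $\pi$ is continuous from $D(A_0^{1/2})$ with the graph norm. Moreover, $\mathcal V$-convergence implies local uniform convergence on $[0,\pi)$, because $\int\omega|f'|^2<\infty$ and $\omega^{-1}=(\sin\varphi)^{-1/2}$ is integrable near the endpoints. Indeed $|f(\varphi)-f(\psi)|^2\le\int\omega|f'|^2\cdot\int_\psi^\varphi\omega^{-1}$, so $f$ is continuous up to the endpoints. Hence the two notions of trace agree. The identification $\mathcal V=D(A_0^{1/2})$ is the standard Friedrichs form-domain identity, which we take as given.

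The core estimate is a dyadic (or Cauchy--Schwarz with cutoff) split in frequency. By Proposition~\ref{prop:endpoint-growth}, $|\Phi_n(0)|^2=|\Phi_n(\pi)|^2\le C(n+1)^{1/2}$. Fix $N\ge1$. Then
\[
|f(0)|\le \sum_{n<N}|f_n|(n+1)^{1/4}C+\sum_{n\ge N}|f_n|(n+1)^{1/4}C .
\]
By Cauchy--Schwarz, the low part is at most $C\bigl(\sum_{n<N}(n+1)^{1/2}\bigr)^{1/2}\|f\|_{\HH}\le CN^{3/4}\|f\|_{\HH}$. The high part is at most
\[
C\Bigl(\sum_{n\ge N}(n+1)^{1/2}\lambda_n^{-1}\Bigr)^{1/2}\mathfrak a_0[f,f]^{1/2}\le CN^{-1/4}\mathfrak a_0[f,f]^{1/2},
\]
since $\lambda_n\asymp(n+1)^2$ for $n\ge1$ and $\sum_{n\ge N}(n+1)^{-3/2}\asymp N^{-1/2}$. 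The term $n=0$ with $\lambda_0=0$ always sits in the low part because $N\ge1$. This high-part bound also gives the absolute convergence used above.

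Squaring, we get $|f(0)|^2\le C(N^{3/2}\|f\|^2+N^{-1/2}\mathfrak a_0)$, and the same bound holds at $\pi$. Writing $\varepsilon=N^{-1/2}$, so that $N^{3/2}=\varepsilon^{-3}$, gives \eqref{eq:trace-epsilon} for $\varepsilon\in(0,1]$. Here we choose $N=\lceil\varepsilon^{-2}\rceil$ and absorb the rounding into constants. For \eqref{eq:trace}, optimize: if $\mathfrak a_0\ge\|f\|^2$, take $\varepsilon=(\|f\|^2/\mathfrak a_0)^{1/4}\le1$, which yields $C\mathfrak a_0^{3/4}\|f\|^{1/2}$. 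Otherwise take $\varepsilon=1$, which gives $C\|f\|^2$. Conversely, \eqref{eq:trace} implies \eqref{eq:trace-epsilon} by Young's inequality with exponents $4/3$ and $4$, applied as $\mathfrak a_0^{3/4}\|f\|^{1/2}=(\varepsilon\mathfrak a_0)^{3/4}(\varepsilon^{-3}\|f\|^2)^{1/4}$. This establishes the stated equivalence.

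The main obstacle is not the spectral computation, which is immediate from Propositions~\ref{prop:spectrum} and~\ref{prop:endpoint-growth}. It is the justification that the series trace coincides with the pointwise endpoint value of a general element of $\mathcal V$, which requires the continuity argument via integrability of $\omega^{-1}$ sketched in the first paragraph together with density of polynomials in $\mathcal V$.
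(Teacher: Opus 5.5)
Your proof is correct and follows the paper's argument: the same low/high spectral split at $N$, using Cauchy--Schwarz and the endpoint growth $|\Phi_n(0)|^2\le C(n+1)^{1/2}$, gives the bound $(N+1)^{3/2}\|f\|_{\HH}^2+N^{-1/2}\mathfrak a_0[f,f]$. The only differences are that you derive \eqref{eq:trace-epsilon} first via $N=\lceil\varepsilon^{-2}\rceil$ and then optimize in $\varepsilon$, whereas the paper takes $N\asymp(\mathfrak a_0[f,f]/\|f\|_{\HH}^2)^{1/2}$ directly and then applies Young, and that you explicitly identify the series trace with the pointwise endpoint value through the integrability of $\omega^{-1}$, a point the paper leaves implicit.
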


\begin{proof}
It suffices first to consider a finite expansion
\[
f=\sum_{n=0}^{K}a_n\Phi_n.
\]
Write
\[
L=\|f\|_{\HH}^2=\sum_{n=0}^{K}|a_n|^2,
\qquad
E=\mathfrak a_0[f,f]
=\sum_{n=0}^{K}\lambda_n|a_n|^2,
\]
where \(\lambda_n=2n(2n+1)\). For any integer \(N\ge1\), split
\(f=P_{\le N}f+P_{>N}f\). By Cauchy--Schwarz and
\eqref{eq:endpoint-growth},
\[
|P_{\le N}f(0)|^2+|P_{\le N}f(\pi)|^2
\le
C(N+1)^{3/2}L.
\]
For the high modes,
\[
\begin{aligned}
|P_{>N}f(0)|^2+|P_{>N}f(\pi)|^2
&\le
E\sum_{n>N}
\frac{|\Phi_n(0)|^2+|\Phi_n(\pi)|^2}{\lambda_n}\\
&\le
CN^{-1/2}E,
\end{aligned}
\]
because \(\lambda_n\asymp n^2\) and the summand is
\(O(n^{-3/2})\). Hence
\begin{equation}\label{eq:trace-split}
|f(0)|^2+|f(\pi)|^2
\le
C\left((N+1)^{3/2}L+N^{-1/2}E\right).
\end{equation}
If \(E\le L\), choose \(N=1\). If \(E>L\), choose an integer
\(N\asymp(E/L)^{1/2}\). In both cases, \eqref{eq:trace-split}
yields
\[
|f(0)|^2+|f(\pi)|^2
\le
C\left(E^{3/4}L^{1/4}+L\right),
\]
which is \eqref{eq:trace}. The finite expansions are dense in
\(\mathcal V\), and \eqref{eq:trace-epsilon} shows that both endpoint
evaluations extend continuously to \(\mathcal V\). Finally,
\eqref{eq:trace-epsilon} follows from \eqref{eq:trace} and Young's
inequality
\[
E^{3/4}L^{1/4}
\le
\frac34\varepsilon E+\frac14\varepsilon^{-3}L.
\]
\end{proof}

\begin{remark}
The exponent \(3/4\) is the critical quantity. It generates the fourth
power in the lower Robin spectral scale and, after taking the radial square
root, the frequency scale \(\mu^2\).
\end{remark}

\section{Logarithmic cylinder and endpoint feedback}
\label{sec:cylinder}

Let
\[
w(t,\varphi)
=
W\!\left(\frac{e^{2t}}2\cos\varphi,
          e^t\sqrt{\sin\varphi}\right),
\qquad t=\log\rho.
\]
Proposition~\ref{prop:polar} gives, in the interior angular interval,
\begin{equation}\label{eq:cylinder-interior}
w_{tt}+w_t-A_0w=0.
\end{equation}
We next identify the endpoint feedback. For a smooth profile the chain rule
gives
\[
w_\varphi
=-\frac{\rho^2}{2}\sin\varphi\,W_x
+\frac{\rho\cos\varphi}{2\sqrt{\sin\varphi}}\,W_z,
\]
and hence the formal endpoint fluxes are
\begin{equation}\label{eq:angular-flux-endpoints}
\omega w_\varphi\big|_{\varphi=0}
=\frac{e^t}{2}\,\partial_zW\!\left(\frac{e^{2t}}2,0\right),
\qquad
\omega w_\varphi\big|_{\varphi=\pi}
=-\frac{e^t}{2}\,\partial_zW\!\left(-\frac{e^{2t}}2,0\right).
\end{equation}
These expressions only identify the coefficient and are not used as
pointwise traces for rough solutions.

Let \(C:\mathcal V\to\mathbb C^2\) be the continuous endpoint trace
\[
Cf=\begin{pmatrix}f(0)\\ f(\pi)\end{pmatrix}.
\]
If \(b=\kappa_{1/4}V\), define
\begin{equation}\label{eq:beta-exact}
\beta_0(t)=2e^t b\!\left(\frac{e^{2t}}2\right),
\qquad
\beta_\pi(t)=2e^t b\!\left(-\frac{e^{2t}}2\right),
\end{equation}
and
\[
\Gamma(t)=\operatorname{diag}(\beta_0(t),\beta_\pi(t)).
\]
For a finite-energy profile, apply the weak conormal identity
\eqref{eq:weak-robin-reduction} to tests supported in a compact Grushin
annulus and transform to \((t,\varphi)\). For every
\(\psi\in C_c^\infty(J;\mathcal V)\) this gives
\begin{equation}\label{eq:weak-cylinder-local}
\int_J\left(
-(w_t,\psi_t)_{\HH}+(w_t,\psi)_{\HH}
-\mathfrak a_0[w,\psi]
-(\Gamma(t)Cw,C\psi)_{\mathbb C^2}
\right)\,dt=0.
\end{equation}
The signs agree with the smooth calculation in
\eqref{eq:angular-flux-endpoints}: the upper angular endpoint has the
opposite geometric orientation, and the lower endpoint is subtracted in the
integration-by-parts bracket. Thus the endpoint condition is represented,
without a pointwise normal derivative, by the weak cylinder equation
\begin{equation}\label{eq:full-cylinder-equation}
w_{tt}+w_t-A_0w=C^*\Gamma(t)Cw
\qquad\text{in }\mathcal V^*.
\end{equation}
In particular, on any translated interval \(t=t_0+\tau\),
\begin{equation}\label{eq:beta-scale}
\|\Gamma\|_{L^\infty(J)}
\le
C e^{t_0+\sup J}\|V\|_{L^\infty(\mathcal A_{t_0,J})},
\end{equation}
where \(\mathcal A_{t_0,J}\) is the corresponding pair of boundary annuli.
Thus the dimensionless Robin size at Grushin radius \(\rho_0=e^{t_0}\) is
of order \(\rho_0\|V\|_\infty\), and its square is the critical cost scale.

\section{The Robin angular form and the critical scale}

For \(\boldsymbol\beta=(\beta_0,\beta_\pi)\in\R^2\), define on
\(\mathcal V\)
\[
\mathfrak a_{\boldsymbol\beta}[f,g]
=
\mathfrak a_0[f,g]
+
\beta_0f(0)\overline{g(0)}
+
\beta_\pi f(\pi)\overline{g(\pi)}.
\]
By Theorem~\ref{thm:trace}, this is a closed, lower-semibounded form and
defines a self-adjoint operator \(A_{\boldsymbol\beta}\) with compact
resolvent.

\begin{theorem}[Critical form lower bound and sharp power]
\label{thm:form-bound}
There exists \(C>0\) such that
\begin{equation}\label{eq:robin-lower}
\mathfrak a_{\boldsymbol\beta}[f,f]
\ge
-C\left(
1+|\beta_0^-|^4+|\beta_\pi^-|^4
\right)\|f\|_{\HH}^2
\end{equation}
for every \(f\in\mathcal V\). Moreover, there exist \(c>0\) and
\(\mu_0>0\) such that
\begin{equation}\label{eq:sharp-mu4}
\inf\sigma(A_{(-\mu,0)})\le-c\mu^4,
\qquad \mu\ge\mu_0.
\end{equation}
Thus the fourth power in \eqref{eq:robin-lower} is sharp.
\end{theorem}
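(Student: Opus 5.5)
The lower bound \eqref{eq:robin-lower} will follow from the $\varepsilon$-form \eqref{eq:trace-epsilon} of Theorem~\ref{thm:trace}. Only the negative parts of the coefficients matter, so
\[
\mathfrak a_{\boldsymbol\beta}[f,f]
\ge
\mathfrak a_0[f,f]-\beta^-\bigl(|f(0)|^2+|f(\pi)|^2\bigr),
\qquad
\beta^-:=\max(\beta_0^-,\beta_\pi^-).
\]
If $C\beta^-\le 1/2$, take $\varepsilon=1$. The Robin term is then at most $\tfrac12\mathfrak a_0[f,f]+C\|f\|_{\HH}^2$, and the bound $-C\|f\|^2$ follows. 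Otherwise choose $\varepsilon=(2C\beta^-)^{-1}\in(0,1]$. This gives
\[
\beta^-\bigl(|f(0)|^2+|f(\pi)|^2\bigr)\le \tfrac12\mathfrak a_0[f,f]+C\beta^-\varepsilon^{-3}\|f\|^2\le \tfrac12\mathfrak a_0[f,f]+C'(\beta^-)^4\|f\|^2,
\]
and therefore $\mathfrak a_{\boldsymbol\beta}[f,f]\ge -C'(1+|\beta_0^-|^4+|\beta_\pi^-|^4)\|f\|^2$. This part is routine.

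For the sharpness \eqref{eq:sharp-mu4} I will use the min-max principle: it suffices to exhibit $f_\mu\in\mathcal V$ with $\mathfrak a_{(-\mu,0)}[f_\mu,f_\mu]\le -c\mu^4\|f_\mu\|_{\HH}^2$. The natural scaling comes from the behaviour near $\varphi=0$, where $\omega\approx\varphi^{1/2}$. Take a boundary-layer profile $f_\mu(\varphi)=g(\mu^2\varphi)\chi(\varphi)$, where $g$ is a fixed smooth compactly supported function on $[0,\infty)$ with $g(0)=1$ and $\chi$ is a cutoff equal to $1$ near $0$ and vanishing near $\pi/2$ (for large $\mu$, $\chi$ is irrelevant on the support). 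Setting $\lambda=\mu^2$ and $s=\lambda\varphi$, I compute:
\[
\|f_\mu\|_{\HH}^2\approx\lambda^{-3/2}\int_0^\infty s^{1/2}|g|^2\,ds,
\qquad
\mathfrak a_0[f_\mu,f_\mu]\approx 4\lambda^{1/2}\int_0^\infty s^{1/2}|g'|^2\,ds,
\]
and the Robin term is $-\mu|g(0)|^2=-\lambda^{1/2}$. The quotient is therefore $\lambda^{2}\bigl(4\int s^{1/2}|g'|^2-1\bigr)/\int s^{1/2}|g|^2=\mu^4\cdot(\dots)$. The numerator factor has to be negative.

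The key step, and the main potential obstacle, is choosing $g$ with $4\int_0^\infty s^{1/2}|g'|^2\,ds<|g(0)|^2$. Since the model one-dimensional trace inequality is scale-invariant, this is achieved by rescaling: replace $g$ by $g(s/\delta)$. Then $\int s^{1/2}|g'(s/\delta)/\delta|^2\,ds=\delta^{-1/2}\int s^{1/2}|g'|^2$, which tends to $0$ as $\delta\to\infty$, while $g(0)=1$ is unchanged. Fix such a large $\delta$, so the numerator is negative; a wide profile is cheap in weighted energy because the weight is $s^{1/2}$. Equivalently, just take $f_\mu(\varphi)=g(\varphi/\delta_\mu)$ with the scale tuned to $\mu$.

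Finally, I will check that the exact weight $\omega=(\sin\varphi)^{1/2}$ and the cutoff perturb these integrals only by relative errors $O(\lambda^{-1})$, since the support has size $O(\lambda^{-1})$. This yields $\inf\sigma(A_{(-\mu,0)})\le -c\mu^4$ for $\mu\ge\mu_0$.
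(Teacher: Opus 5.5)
Your proposal is correct and follows essentially the same route as the paper: the lower bound uses the $\varepsilon$-form of the trace inequality with $\varepsilon\sim(\beta^-)^{-1}$ (the paper uses $\beta_0^-+\beta_\pi^-$ where you use the maximum, which makes no difference). Sharpness uses the same test function $g(\mu^2\varphi)$ with $4\int_0^\infty r^{1/2}|g'|^2\,dr<|g(0)|^2$ in the Rayleigh quotient, and your $\delta^{-1/2}$ rescaling is an explicit version of the paper's ``slowly decaying cutoff'' remark; note, though, that it works because the energy-only bound $|g(0)|^2\lesssim\int s^{1/2}|g'|^2\,ds$ is \emph{not} scale-invariant, so your phrase ``scale-invariant'' should be reversed.
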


\begin{proof}
Let
\[
\mu=\beta_0^-+\beta_\pi^-.
\]
Using \eqref{eq:trace-epsilon},
\[
\mathfrak a_{\boldsymbol\beta}[f,f]
\ge
E-C\mu\left(\varepsilon E+\varepsilon^{-3}L\right),
\]
where \(E=\mathfrak a_0[f,f]\) and \(L=\|f\|_{\HH}^2\). If
\(\mu\le1\), choose \(\varepsilon\) to be a fixed sufficiently small
constant. If \(\mu>1\), choose \(\varepsilon=(2C\mu)^{-1}\). This gives
\[
\mathfrak a_{\boldsymbol\beta}[f,f]
\ge
\frac12E-C(1+\mu^4)L.
\]
Since
\(\mu^4\le8(|\beta_0^-|^4+|\beta_\pi^-|^4)\),
\eqref{eq:robin-lower} follows.

To prove sharpness, choose a nonnegative function
\(g\in C_c^\infty([0,\infty))\) such that \(g(0)=1\) and
\[
4\int_0^\infty r^{1/2}|g'(r)|^2\dd r<1.
\]
Such a function is obtained by taking a cutoff that remains equal to one on
\([0,1]\) and decays sufficiently slowly on a long fixed interval. Define
\[
f_\mu(\varphi)=g(\mu^2\varphi).
\]
For large \(\mu\), its support lies near \(\varphi=0\). With
\(r=\mu^2\varphi\) and
\((\sin(r/\mu^2))^{1/2}=\mu^{-1}r^{1/2}(1+o(1))\), one has
\[
\mathfrak a_0[f_\mu,f_\mu]
=
4\mu\int_0^\infty r^{1/2}|g'(r)|^2\dd r+o(\mu),
\]
\[
\|f_\mu\|_{\HH}^2
=
\mu^{-3}\int_0^\infty r^{1/2}|g(r)|^2\dd r+o(\mu^{-3}),
\]
and \(f_\mu(0)=1\). Therefore
\[
\mathfrak a_{(-\mu,0)}[f_\mu,f_\mu]
\le-c_1\mu
\]
for all sufficiently large \(\mu\), whereas
\(\|f_\mu\|_{\HH}^2\le C_1\mu^{-3}\). The Rayleigh quotient then gives
\eqref{eq:sharp-mu4}.
\end{proof}

The radial rate associated with a spectral bottom of order \(-\mu^4\) is of
order \(\mu^2\). This motivates the critical cutoff
\[
N_{\mathrm{crit}}\asymp1+\mu^2.
\]

\section{Low--high trace splitting and high-frequency coercivity}

Let \(P_{\le N}\) and \(P_{>N}\) denote the spectral projectors of
\(A_0\).

\begin{lemma}[Low-frequency trace]\label{lem:low}
For every integer \(N\ge0\),
\[
|P_{\le N}f(0)|^2+|P_{\le N}f(\pi)|^2
\le
C(N+1)^{3/2}\|P_{\le N}f\|_{\HH}^2.
\]
\end{lemma}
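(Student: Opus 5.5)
The proof is a direct Cauchy--Schwarz argument on the finite expansion of \(P_{\le N}f\), using the endpoint bound \eqref{eq:endpoint-growth} from Proposition~\ref{prop:endpoint-growth}. This is the same computation that produced the low-mode half of \eqref{eq:trace-split} in the proof of Theorem~\ref{thm:trace}, isolated as a standalone statement.

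Take \(f\in\HH\) and write \(a_n=(f,\Phi_n)_{\HH}\). By Proposition~\ref{prop:spectrum}, the spectral projector of \(A_0\) onto eigenvalues \(2n(2n+1)\) with \(n\le N\) is
\[
P_{\le N}f=\sum_{n=0}^{N}a_n\Phi_n .
\]
This is a finite combination of polynomials in \(\cos\varphi\), so it is smooth on \([0,\pi]\). Its endpoint values are therefore classical, and they agree with the continuous trace \(C\) of Theorem~\ref{thm:trace}. Evaluating at \(\varphi=0\) gives
\[
P_{\le N}f(0)=\sum_{n=0}^{N}a_n\Phi_n(0).
\]
Cauchy--Schwarz yields
\[
|P_{\le N}f(0)|^2\le\Bigl(\sum_{n=0}^{N}|\Phi_n(0)|^2\Bigr)\Bigl(\sum_{n=0}^{N}|a_n|^2\Bigr).
\]
By orthonormality, the second factor equals \(\|P_{\le N}f\|_{\HH}^2\). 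The same argument applies at \(\varphi=\pi\), and there \(|\Phi_n(\pi)|=|\Phi_n(0)|\).

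It remains to bound the first factor. By \eqref{eq:endpoint-growth},
\[
\sum_{n=0}^{N}\bigl(|\Phi_n(0)|^2+|\Phi_n(\pi)|^2\bigr)\le C\sum_{n=0}^{N}(n+1)^{1/2}.
\]
Comparing the sum with the integral \(\int_0^{N+1}(x+1)^{1/2}\,dx\) bounds it by \(C(N+1)^{3/2}\), uniformly in \(N\ge0\). Combining the two endpoint estimates proves the lemma.

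No step presents a real obstacle. The only point needing care is that \(P_{\le N}f\) is defined for every \(f\in\HH\), not just \(f\in\mathcal V\), and that its endpoint values are unambiguous. Both facts follow from the smoothness of the finitely many Gegenbauer modes.
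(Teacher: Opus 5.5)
Your proof is correct and follows the paper's argument: you expand \(P_{\le N}f\) in the Gegenbauer basis, apply Cauchy--Schwarz at each endpoint, and bound \(\sum_{n\le N}|\Phi_n(0)|^2\) by \(C(N+1)^{3/2}\) using \eqref{eq:endpoint-growth}. Your extra remarks, on the smoothness of the finite expansion and on the comparison of the sum with an integral, make explicit steps that the paper leaves implicit.
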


\begin{proof}
Write \(P_{\le N}f=\sum_{n=0}^{N}a_n\Phi_n\). By
Cauchy--Schwarz and \eqref{eq:endpoint-growth},
\[
|P_{\le N}f(0)|^2
\le
\left(\sum_{n=0}^{N}|a_n|^2\right)
\left(\sum_{n=0}^{N}|\Phi_n(0)|^2\right)
\le
C(N+1)^{3/2}\|P_{\le N}f\|_{\HH}^2.
\]
The estimate at \(\pi\) is identical.
\end{proof}

\begin{lemma}[High-frequency trace]\label{lem:high}
For every integer \(N\ge1\),
\[
|P_{>N}f(0)|^2+|P_{>N}f(\pi)|^2
\le
CN^{-1/2}
\mathfrak a_0[P_{>N}f,P_{>N}f].
\]
\end{lemma}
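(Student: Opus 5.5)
The plan is to repeat the high-mode half of the proof of Theorem~\ref{thm:trace}, this time for the projected function only. I first take $f$ to be a finite expansion $f=\sum_{n\le K}a_n\Phi_n$, so that $P_{>N}f=\sum_{N<n\le K}a_n\Phi_n$. The endpoint values are then finite sums, and I write $P_{>N}f(0)=\sum_{N<n\le K}a_n\Phi_n(0)$. Inserting the factor $\lambda_n^{1/2}\lambda_n^{-1/2}$, with $\lambda_n=2n(2n+1)>0$ for $n\ge1$, and applying Cauchy--Schwarz gives
\[
|P_{>N}f(0)|^2\le\Bigl(\sum_{n>N}\lambda_n|a_n|^2\Bigr)\Bigl(\sum_{n>N}\frac{|\Phi_n(0)|^2}{\lambda_n}\Bigr).
\]
The first factor is exactly $\mathfrak a_0[P_{>N}f,P_{>N}f]$. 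This follows from Proposition~\ref{prop:spectrum}: the form is diagonal in the orthonormal basis $\{\Phi_n\}$ with eigenvalues $\lambda_n$. The same bound holds at $\pi$, because $|\Phi_n(\pi)|=|\Phi_n(0)|$.

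Next I bound the tail sum. By \eqref{eq:endpoint-growth}, $|\Phi_n(0)|^2\le C(n+1)^{1/2}$. Also $\lambda_n\ge 4n^2$ for $n\ge1$. So the summand is at most $Cn^{-3/2}$, and comparing with an integral gives
\[
\sum_{n>N}n^{-3/2}\le\int_N^\infty s^{-3/2}\,ds=2N^{-1/2}.
\]
This is uniform in $N\ge1$, and the restriction $N\ge1$ is what excludes the zero mode $\lambda_0=0$. Adding the two endpoints gives the inequality with a constant $C$ independent of $N$ and $K$.

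Finally, I pass from finite expansions to general $f\in\mathcal V$. By density, for such $f$ the truncations $f_K=P_{\le K}f$ converge to $f$ in the form norm $(\mathfrak a_0[\cdot,\cdot]+\|\cdot\|_{\HH}^2)^{1/2}$. Then $P_{>N}f_K\to P_{>N}f$ in that norm as well, since $P_{>N}$ commutes with the spectral truncation. The traces converge by the continuity of $C$ on $\mathcal V$ established in Theorem~\ref{thm:trace}, and the right-hand side converges by the same form-norm convergence. The only point needing care is this limiting step: the trace of $P_{>N}f$ must be understood through the continuous extension from Theorem~\ref{thm:trace}. Once that is taken as given, the estimate is a direct Cauchy--Schwarz argument with no real obstacle.
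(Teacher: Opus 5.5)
Your proposal is correct and follows the paper's proof: the same Cauchy--Schwarz with weights $\lambda_n^{\pm1/2}$, followed by the tail bound $\sum_{n>N}(n+1)^{1/2}\lambda_n^{-1}\le CN^{-1/2}$. Your density step from finite expansions to all of $\mathcal V$, using the trace continuity from Theorem~\ref{thm:trace}, spells out an extension that the paper leaves implicit.
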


\begin{proof}
For \(P_{>N}f=\sum_{n>N}a_n\Phi_n\), Cauchy--Schwarz gives
\[
|P_{>N}f(0)|^2
\le
\left(\sum_{n>N}\lambda_n|a_n|^2\right)
\left(\sum_{n>N}\frac{|\Phi_n(0)|^2}{\lambda_n}\right).
\]
Since \(|\Phi_n(0)|^2\le C(n+1)^{1/2}\) and
\(\lambda_n\asymp n^2\), the second factor is bounded by
\(CN^{-1/2}\). The endpoint \(\pi\) is treated in the same way.
\end{proof}

\begin{theorem}[High-frequency coercivity]\label{thm:high-coercivity}
There exists a constant \(C_0\) such that, if
\[
N\ge C_0(1+\mu^2),
\qquad
|\beta_0|+|\beta_\pi|\le\mu,
\]
then
\begin{equation}\label{eq:high-coercivity}
\mathfrak a_{\boldsymbol\beta}[f,f]
\ge
\frac12\mathfrak a_0[P_{>N}f,P_{>N}f]
-CN^2\|P_{\le N}f\|_{\HH}^2.
\end{equation}
\end{theorem}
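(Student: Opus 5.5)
The plan is to split $f=f_\ell+f_h$ with $f_\ell=P_{\le N}f$ and $f_h=P_{>N}f$, and to treat the endpoint term as a perturbation.

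First, the spectral projectors of $A_0$ are orthogonal for $\mathfrak a_0$, since the $\Phi_n$ diagonalize the form. Hence
\[
\mathfrak a_0[f,f]=\mathfrak a_0[f_\ell,f_\ell]+\mathfrak a_0[f_h,f_h]\ge\mathfrak a_0[f_h,f_h].
\]
The low part contributes nonnegatively and can simply be dropped. The only thing to control is the endpoint term
\[
\beta_0|f(0)|^2+\beta_\pi|f(\pi)|^2,
\]
whose modulus is at most $\mu\bigl(|f(0)|^2+|f(\pi)|^2\bigr)$ because $|\beta_0|+|\beta_\pi|\le\mu$.

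Next, split each trace by $|a+b|^2\le2|a|^2+2|b|^2$ and apply Lemmas~\ref{lem:low} and~\ref{lem:high}:
\[
|f(0)|^2+|f(\pi)|^2\le 2C(N+1)^{3/2}\|f_\ell\|_{\HH}^2+2CN^{-1/2}\mathfrak a_0[f_h,f_h].
\]
This gives
\[
\mathfrak a_{\boldsymbol\beta}[f,f]\ge\bigl(1-2C\mu N^{-1/2}\bigr)\mathfrak a_0[f_h,f_h]-2C\mu(N+1)^{3/2}\|f_\ell\|_{\HH}^2 .
\]

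The key step is choosing the constant. With $N\ge C_0(1+\mu^2)$ we have $\mu N^{-1/2}\le C_0^{-1/2}$. Taking $C_0\ge 16C^2$ makes the coefficient of the high-frequency energy at least $1/2$; this is where the critical cutoff $N\asymp1+\mu^2$ is needed. For the low term, $\mu\le N^{1/2}$ (for $C_0\ge1$), so
\[
\mu(N+1)^{3/2}\le C N^2,
\]
which gives the stated $-CN^2\|P_{\le N}f\|^2$.

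The only subtle point is justifying the lemmas for general $f\in\mathcal V$ rather than finite expansions. Both sides are continuous on $\mathcal V$ by Theorem~\ref{thm:trace}, and finite expansions are dense, so the inequality passes to the limit. I expect no real obstacle beyond this bookkeeping of constants; the substantive input is already in the $N^{-1/2}$ high-frequency trace bound.
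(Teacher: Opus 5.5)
Your proposal is correct and follows essentially the same route as the paper: you split $f=P_{\le N}f+P_{>N}f$, use $\mathfrak a_0[f,f]\ge\mathfrak a_0[P_{>N}f,P_{>N}f]$, bound the endpoint term by $|a+b|^2\le2|a|^2+2|b|^2$ together with Lemmas~\ref{lem:low} and~\ref{lem:high}, and then choose $C_0$ so that $\mu N^{-1/2}$ is small and $\mu(N+1)^{3/2}\le CN^2$. Your explicit choice $C_0\ge\max\{1,16C^2\}$ and your density remark for general $f\in\mathcal V$ are slightly more detailed bookkeeping than the paper gives, but they add no new ingredient.
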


\begin{proof}
Set \(f_L=P_{\le N}f\) and \(f_H=P_{>N}f\). Since
\(|a+b|^2\le2|a|^2+2|b|^2\), Lemmas~\ref{lem:low} and~\ref{lem:high} imply
\[
\begin{aligned}
\mathfrak a_{\boldsymbol\beta}[f,f]
&\ge
\mathfrak a_0[f_H,f_H]
-2\mu\sum_{\xi\in\{0,\pi\}}
\left(|f_L(\xi)|^2+|f_H(\xi)|^2\right)\\
&\ge
\left(1-C\mu N^{-1/2}\right)
\mathfrak a_0[f_H,f_H]
-C\mu(N+1)^{3/2}\|f_L\|_{\HH}^2.
\end{aligned}
\]
Choosing \(C_0\) sufficiently large makes the first coefficient at least
\(1/2\). The same assumption gives \(\mu\le C\sqrt N\), and hence
\[
\mu(N+1)^{3/2}\le CN^2.
\]
This proves \eqref{eq:high-coercivity}.
\end{proof}

\section{Free finite-dimensional observability}

We now prove the finite-dimensional boundary observability estimate used in
our spectral decomposition. The coefficient-recovery mechanism belongs to
the Tur'an--Nazarov and moment-method family for exponential systems
\cite{Nazarov,FR71,FR74}. The coefficient argument below is given explicitly because both the regular
and singular radial branches must be recovered on the same observation
interval.

\begin{lemma}[Polynomial coefficient inequality]\label{lem:polynomial-coeff}
There exists an absolute constant $C_*>1$ such that, for every integer
$m\ge0$ and every polynomial
\[
P(y)=\sum_{j=0}^{m}c_jy^j,
\]
one has
\begin{equation}\label{eq:polynomial-coeff}
\sum_{j=0}^{m}|c_j|^2
\le
C_*^{\,m+1}
\int_1^e|P(y)|^2\,\dd y.
\end{equation}
\end{lemma}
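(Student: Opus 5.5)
The estimate compares the coefficient vector of $P$ with its $L^2(1,e)$ norm, i.e.\ it bounds the inverse of the Gram (Hilbert-type) matrix of the monomials on $[1,e]$. Since both sides are quadratic forms on $\mathbb C^{m+1}$, it suffices to show that the smallest eigenvalue of the Gram matrix
\[
G_{jk}=\int_1^e y^{j+k}\,\dd y
\]
is at least $C_*^{-(m+1)}$. Rather than estimating this matrix directly, I will pass through an orthonormal basis adapted to the interval.

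\textbf{Step 1: reduce to the standard interval.} Put $y=a+bs$ with $a=\tfrac{e+1}{2}$, $b=\tfrac{e-1}{2}$, so that $s\in[-1,1]$, and write $Q(s)=P(a+bs)$. Then
\[
\int_1^e|P|^2\,\dd y=b\int_{-1}^1|Q|^2\,\dd s .
\]

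\textbf{Step 2: expand in Legendre polynomials.} Write $Q=\sum_{k\le m}d_k\widetilde P_k$ with $\widetilde P_k$ the $L^2(-1,1)$-normalized Legendre polynomials. Then $\sum_k|d_k|^2=\int_{-1}^1|Q|^2\,\dd s$. Each $\widetilde P_k$ has monomial coefficients of absolute value summing to at most $\sqrt{k+1/2}\,3^k$; this follows from the explicit formula or from $|P_k|\le1$ together with the Chebyshev/Markov coefficient bound $\le(1+\sqrt2)^k$. Consequently the monomial coefficients $e_j$ of $Q$ satisfy
\[
\sum_j|e_j|^2\le C\,9^m (m+1)^2\sum_k|d_k|^2 .
\]

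\textbf{Step 3: undo the affine change.} Since $s=(y-a)/b$, we have
\[
P(y)=\sum_j e_j\, b^{-j}(y-a)^j .
\]
Expanding $(y-a)^j$ binomially, the coefficient of $y^i$ is bounded by $\sum_j|e_j|\,b^{-j}\binom{j}{i}a^{j-i}$. With $a<2$ and $b^{-1}<2$, this gives
\[
|c_i|\le \sum_j|e_j|\,4^j 2^j ,
\]
and therefore
\[
\sum_i|c_i|^2\le (m+1)^2\,64^m\sum_j|e_j|^2 .
\]

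\textbf{Step 4: combine.} Chaining Steps 1--3, and absorbing the polynomial factors $(m+1)^4$ into a geometric factor, yields \eqref{eq:polynomial-coeff} with an absolute $C_*$.

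The main obstacle is the Legendre coefficient bound in Step 2, which must be geometric in $k$. I would take it from the Rodrigues formula $P_k=\frac{1}{2^k k!}\frac{d^k}{ds^k}(s^2-1)^k$: its coefficients are bounded by $2^{-k}\binom{k}{\lfloor k/2\rfloor}\binom{2k}{k}$, which is at most $4^k$. Everything else is bookkeeping of constants.
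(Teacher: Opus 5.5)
Your proposal is correct and follows the paper's proof essentially step for step. Both arguments use the affine change to $[-1,1]$, Parseval in the orthonormal Legendre basis, and the explicit-formula bound $2^{-k}\binom{k}{r}\binom{2k-2r}{k}\le 4^k$ on Legendre coefficients, exactly as the paper does. Both then pay a geometric factor when undoing the affine change and absorb the polynomial factors into $C_*^{m+1}$. The only wrinkle is cosmetic: the Rodrigues route gives coefficient sums of order $(k+1)4^k$ rather than the $3^k$ quoted in Step~2, which affects only the value of $C_*$.
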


\begin{proof}
Let
\[
a=\frac{e+1}{2},
\qquad
b=\frac{e-1}{2},
\qquad
Q(x)=P(a+bx),
\quad -1\le x\le1.
\]
Write
\[
Q(x)=\sum_{k=0}^{m}\gamma_k L_k(x),
\]
where $\{L_k\}_{k\ge0}$ is the orthonormal Legendre basis of
$L^2(-1,1)$.  Parseval's identity gives
\begin{equation}\label{eq:legendre-parseval}
\sum_{k=0}^{m}|\gamma_k|^2
=
\|Q\|_{L^2(-1,1)}^2.
\end{equation}
The classical explicit formula
\[
P_k(x)
=
2^{-k}
\sum_{r=0}^{\lfloor k/2\rfloor}
(-1)^r
\binom{k}{r}
\binom{2k-2r}{k}
 x^{k-2r}
\]
for the Legendre polynomial $P_k$, together with
$L_k=\sqrt{(2k+1)/2}\,P_k$, implies that the Euclidean norm of the
monomial coefficient vector of $L_k$ is bounded by $C_0^{k+1}$ for an
absolute constant $C_0>1$.  Indeed,
\[
2^{-k}\binom{k}{r}\binom{2k-2r}{k}
\le 2^{-k}2^k2^{2k-2r}
\le 4^k,
\]
and the polynomial factors in $k$ can be absorbed into a larger exponential
constant.

If $Q(x)=\sum_{j=0}^{m}q_jx^j$, the triangle inequality followed by
Cauchy--Schwarz and \eqref{eq:legendre-parseval} yields
\[
\left(\sum_{j=0}^{m}|q_j|^2\right)^{1/2}
\le
\sum_{k=0}^{m}|\gamma_k|C_0^{k+1}
\le
C_1^{m+1}\|Q\|_{L^2(-1,1)}.
\]
Finally,
\[
P(y)=Q\!\left(\frac{y-a}{b}\right).
\]
The coefficient vector of $((y-a)/b)^j$ has Euclidean norm bounded by
\[
b^{-j}(1+a)^j.
\]
A second application of Cauchy--Schwarz therefore gives
\[
\left(\sum_{j=0}^{m}|c_j|^2\right)^{1/2}
\le
C_2^{m+1}
\left(\sum_{j=0}^{m}|q_j|^2\right)^{1/2}.
\]
Since
\[
\|Q\|_{L^2(-1,1)}^2
=
\frac1b\int_1^e|P(y)|^2\,\dd y,
\]
combining the preceding estimates and enlarging the constant proves
\eqref{eq:polynomial-coeff}.
\end{proof}

Let
\begin{equation}\label{eq:free-expansion}
z(t,\varphi)
=
\sum_{n=0}^{N}
\left(
 a_ne^{2nt}+d_ne^{-(2n+1)t}
\right)\Phi_n(\varphi),
\qquad 0\le t\le1.
\end{equation}
This is precisely the general solution in the truncated angular space of
\[
z_{tt}+z_t-A_0z=0.
\]
We define the Cauchy-data energy at $t=0$ by
\begin{equation}\label{eq:EN-definition}
\mathcal E_N[z](0)
:=
\|z_t(0)\|_{\HH}^2
+
\|(T+I)z(0)\|_{\HH}^2.
\end{equation}
The shift by \(I\) is essential only at the zero angular mode: unlike
\(A_0^{1/2}\), the operator \(T+I\) controls the constant component and
therefore defines a genuine Cauchy-data norm on the whole truncated space.

\begin{lemma}[Equivalence with radial coefficients]\label{lem:energy-equivalence}
There exist absolute constants $c,C>0$ such that
\begin{equation}\label{eq:energy-equivalence}
c\sum_{n=0}^{N}(n+1)^2
\bigl(|a_n|^2+|d_n|^2\bigr)
\le
\mathcal E_N[z](0)
\le
C\sum_{n=0}^{N}(n+1)^2
\bigl(|a_n|^2+|d_n|^2\bigr).
\end{equation}
\end{lemma}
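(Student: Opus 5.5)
Since \(\{\Phi_n\}\) is orthonormal in \(\HH\) and both \(\partial_t\) and \(T+I\) act diagonally on the expansion \eqref{eq:free-expansion}, the plan is to reduce \eqref{eq:energy-equivalence} to a uniform estimate for \(2\times2\) blocks, one for each angular mode.

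\textbf{Mode-by-mode reduction.} At \(t=0\), the \(n\)-th coefficient of \(z(0)\) is \(a_n+d_n\). The \(n\)-th coefficient of \(z_t(0)\) is \(2na_n-(2n+1)d_n\). By Proposition~\ref{prop:spectrum}, \((T+I)\Phi_n=(2n+1)\Phi_n\). Parseval therefore gives
\[
\mathcal E_N[z](0)=\sum_{n=0}^{N}\Bigl(|2na_n-(2n+1)d_n|^2+(2n+1)^2|a_n+d_n|^2\Bigr).
\]
It suffices to show that the quadratic form \(q_n(a,d)=|2na-(2n+1)d|^2+(2n+1)^2|a+d|^2\) satisfies
\[
c(n+1)^2(|a|^2+|d|^2)\le q_n(a,d)\le C(n+1)^2(|a|^2+|d|^2),
\]
with \(c,C\) independent of \(n\). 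Summing over \(n\le N\) then finishes the proof.

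\textbf{Upper bound.} This is immediate: each coefficient appearing in \(q_n\) is at most \(2n+1\le 2(n+1)\), so Cauchy--Schwarz bounds \(q_n\) by \(16(n+1)^2(|a|^2+|d|^2)\).

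\textbf{Lower bound.} This is the only step needing care, because the bound must be uniform in \(n\). Write \(q_n=|M_n(a,d)^T|^2\) with
\[
M_n=\begin{pmatrix}2n&-(2n+1)\\ 2n+1&2n+1\end{pmatrix}.
\]
Its determinant is \(2n(2n+1)+(2n+1)^2=(2n+1)(4n+1)\). This is nonzero for every \(n\ge0\), including the zero mode, where the shift by \(I\) is what keeps the determinant from vanishing. The entries of \(M_n\) are \(O(n+1)\), so \(\|M_n^{-1}\|=\|\operatorname{adj}M_n\|/|\det M_n|\le C(n+1)/((2n+1)(4n+1))\le C'(n+1)^{-1}\). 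Hence \(|(a,d)|\le C'(n+1)^{-1}|M_n(a,d)^T|\), which is \(q_n\ge c(n+1)^2(|a|^2+|d|^2)\).

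An alternative route is to normalize by \(n+1\) and observe that \(M_n/(n+1)\) converges to \(\begin{pmatrix}2&-2\\2&2\end{pmatrix}\), which is invertible. Either way, the uniform nondegeneracy of the two-branch Cauchy map is the key point, and it is elementary.
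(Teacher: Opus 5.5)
Your proof is correct and follows essentially the same route as the paper. Both arguments reduce to the $2\times2$ Cauchy-data map for each mode, take the upper bound directly from the definitions, and obtain the uniform lower bound by inverting that map. The paper writes out $a_n=\bigl(y_n+(2n+1)x_n\bigr)/(4n+1)$ and $d_n=(2nx_n-y_n)/(4n+1)$, where $x_n=a_n+d_n$ and $y_n=2na_n-(2n+1)d_n$. This is your adjugate computation with the common factor $2n+1$ cancelled from $\det M_n=(2n+1)(4n+1)$.
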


\begin{proof}
For a fixed $n$, put
\[
x_n=a_n+d_n,
\qquad
y_n=2na_n-(2n+1)d_n.
\]
The contribution of this mode to \eqref{eq:EN-definition} is
\[
|y_n|^2+(2n+1)^2|x_n|^2.
\]
The upper estimate in \eqref{eq:energy-equivalence} follows from the definitions.
Conversely,
\[
a_n=\frac{y_n+(2n+1)x_n}{4n+1},
\qquad
d_n=\frac{2nx_n-y_n}{4n+1}.
\]
Since $(n+1)/(4n+1)$ is bounded above and below by positive absolute
constants, these identities give the lower estimate after summation in $n$.
\end{proof}

\begin{theorem}[Free boundary observability]\label{thm:free-obs}
There exists an absolute constant $C>0$ such that every function of the form
\eqref{eq:free-expansion} satisfies
\begin{equation}\label{eq:free-observability}
\mathcal E_N[z](0)
\le
e^{C(N+1)}
\int_0^1
\left(
 |z(t,0)|^2+|z(t,\pi)|^2
\right)\dd t.
\end{equation}
\end{theorem}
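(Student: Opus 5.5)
The plan is to reduce \eqref{eq:free-observability} to the polynomial coefficient inequality of Lemma~\ref{lem:polynomial-coeff}, via the substitution \(y=e^t\). In fact the trace at \(\varphi=0\) alone will suffice; the term at \(\varphi=\pi\) can simply be discarded on the right-hand side. Evaluating \eqref{eq:free-expansion} at \(\varphi=0\) gives
\[
g(t):=z(t,0)=\sum_{n=0}^{N}\Phi_n(0)\bigl(a_ne^{2nt}+d_ne^{-(2n+1)t}\bigr).
\]
Put \(y=e^t\in[1,e]\) and define \(P(y):=y^{2N+1}g(\log y)\). Then
\[
P(y)=\sum_{n=0}^{N}\Phi_n(0)\bigl(a_n y^{2N+2n+1}+d_n y^{2N-2n}\bigr)
\]
is a polynomial of degree at most \(m=4N+1\).

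The key structural point is that all these exponents are pairwise distinct. The exponents \(2N+2n+1\) are odd and increase with \(n\), while the exponents \(2N-2n\) are even and decrease with \(n\). Hence no two coefficients merge. The monomial coefficients of \(P\) are therefore exactly the numbers \(\Phi_n(0)a_n\) and \(\Phi_n(0)d_n\), with zeros in the remaining slots.

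Lemma~\ref{lem:polynomial-coeff} then yields
\[
\sum_{n=0}^{N}|\Phi_n(0)|^2\bigl(|a_n|^2+|d_n|^2\bigr)\le C_*^{4N+2}\int_1^e|P(y)|^2\dd y .
\]
Changing variables back with \(\dd y=e^t\dd t\) and \(|y^{2N+1}|\le e^{2N+1}\) gives
\[
\int_1^e|P|^2\dd y\le e^{4N+3}\int_0^1|z(t,0)|^2\dd t .
\]

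It remains to convert the weighted coefficient sum into the Cauchy-data energy. By the exact formula \eqref{eq:endpoint-exact}, every \(\Phi_n(0)\) is nonzero, and \eqref{eq:endpoint-growth} together with the parity identity \(|\Phi_n(\pi)|=|\Phi_n(0)|\) gives \(|\Phi_n(0)|^2\ge \tfrac c2(n+1)^{1/2}\ge \tfrac c2\) uniformly in \(n\). Lemma~\ref{lem:energy-equivalence} then bounds the energy:
\[
\mathcal E_N[z](0)\le C\sum_{n\le N}(n+1)^2\bigl(|a_n|^2+|d_n|^2\bigr)\le C(N+1)^2\sum_{n\le N}|\Phi_n(0)|^2\bigl(|a_n|^2+|d_n|^2\bigr).
\]

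Chaining the three estimates gives \(\mathcal E_N[z](0)\le C(N+1)^2C_*^{4N+2}e^{4N+3}\int_0^1|z(t,0)|^2\dd t\). The polynomial factor \((N+1)^2\) is absorbed into \(e^{C(N+1)}\) by enlarging \(C\), and adding the nonnegative term \(|z(t,\pi)|^2\) proves \eqref{eq:free-observability}.

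The only step I expect to require care is the non-collision of exponents after the substitution. This is what allows the regular branch \(e^{2nt}\) and the singular branch \(e^{-(2n+1)t}\) to be recovered simultaneously from a single trace. It is guaranteed by the parity mismatch between the odd and even exponents, which is in turn a consequence of the half-integer shift in \(\kappa_n=2n+\tfrac12\). Should one prefer a symmetric argument, the same computation can be run separately on \(z(t,0)\pm z(t,\pi)\), which isolate the even and odd \(n\) respectively, but this is not needed.
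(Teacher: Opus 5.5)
Your proof is correct and is essentially the paper's argument: the substitution $y=e^t$, Lemma~\ref{lem:polynomial-coeff} applied to a polynomial whose coefficients occupy disjoint monomial slots, the uniform lower bound on $|\Phi_n(0)|^2$ from Proposition~\ref{prop:endpoint-growth}, and Lemma~\ref{lem:energy-equivalence}. The only difference is that the paper first splits the trace into the parity observations $g_0,g_1$ and uses both endpoints through the parallelogram identity, whereas you correctly observe that the exponents $2N+1+2n$ and $2N-2n$ are already pairwise distinct over all $n$, so the single trace $z(t,0)$ suffices and the parity split is not needed for this estimate.
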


\begin{proof}
Set $p_n=\Phi_n(0)$.  Since
\[
\Phi_n(\pi)=(-1)^np_n,
\]
define the even and odd observations
\[
g_0(t)=\frac{z(t,0)+z(t,\pi)}2,
\qquad
g_1(t)=\frac{z(t,0)-z(t,\pi)}2.
\]
Then, for $\sigma\in\{0,1\}$,
\begin{equation}\label{eq:parity-observation}
g_\sigma(t)
=
\sum_{\substack{0\le n\le N\\ n\equiv\sigma\ ({\rm mod}\ 2)}}
 p_n\left(a_ne^{2nt}+d_ne^{-(2n+1)t}\right).
\end{equation}
Moreover, the parallelogram identity gives
\begin{equation}\label{eq:parallelogram-observation}
|g_0(t)|^2+|g_1(t)|^2
=
\frac12\left(|z(t,0)|^2+|z(t,\pi)|^2\right).
\end{equation}

Put $y=e^t\in[1,e]$ and, for each parity $\sigma$, define
\[
P_\sigma(y)
=
y^{2N+1}g_\sigma(\log y).
\]
By \eqref{eq:parity-observation},
\begin{equation}\label{eq:parity-polynomial}
P_\sigma(y)
=
\sum_{\substack{0\le n\le N\\ n\equiv\sigma\ ({\rm mod}\ 2)}}
 p_n\left(
 a_n y^{2N+1+2n}
 +d_n y^{2N-2n}
 \right).
\end{equation}
This is an algebraic polynomial of degree at most $4N+1$.  The exponents
$2N+1+2n$ are odd and lie above $2N$, whereas the exponents $2N-2n$ are even
and lie at most at $2N$.  Consequently, all coefficients displayed in
\eqref{eq:parity-polynomial} occupy distinct monomial positions.  Applying
Lemma~\ref{lem:polynomial-coeff} with $m=4N+1$ gives
\begin{equation}\label{eq:coefficient-control-parity}
\sum_{\substack{0\le n\le N\\ n\equiv\sigma\ ({\rm mod}\ 2)}}
 |p_n|^2\left(|a_n|^2+|d_n|^2\right)
\le
C^{N+1}\int_1^e|P_\sigma(y)|^2\,\dd y.
\end{equation}
On the interval $[1,e]$,
\[
|P_\sigma(y)|^2
=y^{4N+2}|g_\sigma(\log y)|^2.
\]
Since $\dd y=y\,\dd t$,
\begin{equation}\label{eq:polynomial-to-time}
\int_1^e|P_\sigma(y)|^2\,\dd y
=
\int_0^1e^{(4N+3)t}|g_\sigma(t)|^2\,\dd t
\le
e^{4N+3}\int_0^1|g_\sigma(t)|^2\,\dd t.
\end{equation}
The exact endpoint formula in Proposition~\ref{prop:endpoint-growth} implies
that $|p_n|^2\ge c_0>0$ for every $n\ge0$.  Therefore, after summing
\eqref{eq:coefficient-control-parity} over both parities and using
\eqref{eq:polynomial-to-time}--\eqref{eq:parallelogram-observation},
\begin{equation}\label{eq:radial-coefficient-observation}
\sum_{n=0}^{N}
\left(|a_n|^2+|d_n|^2\right)
\le
e^{C(N+1)}
\int_0^1
\left(|z(t,0)|^2+|z(t,\pi)|^2\right)\dd t.
\end{equation}
Multiplying the left-hand side by $(N+1)^2$ changes only the exponential
constant.  Hence
\[
\sum_{n=0}^{N}(n+1)^2
\left(|a_n|^2+|d_n|^2\right)
\le
e^{C(N+1)}
\int_0^1
\left(|z(t,0)|^2+|z(t,\pi)|^2\right)\dd t.
\]
The conclusion follows from Lemma~\ref{lem:energy-equivalence}.
\end{proof}

\begin{remark}[Observability cost]
The factor \(e^{CN}\) is the natural finite-dimensional scale: the observed
traces contain \(O(N)\) distinct exponential frequencies, so coefficient
recovery on a fixed interval cannot be uniform in \(N\). Only the upper bound
needed below is used.
\end{remark}

\section{Robust observability in the truncated angular space}
\label{sec:truncated-robust}

We next prove stability of the finite-dimensional observability estimate
under a measurable, time-dependent Robin feedback, without differentiating
the coefficient.

For an integer $N\ge0$, set
\[
\HH_N=\operatorname{span}\{\Phi_0,\ldots,\Phi_N\},
\]
and define the endpoint observation operator
\[
C_N:\HH_N\longrightarrow\mathbb C^2,
\qquad
C_Nf=\begin{pmatrix}f(0)\\ f(\pi)\end{pmatrix}.
\]
Its adjoint is understood with respect to the scalar product of $\HH_N$ and
the Euclidean scalar product of $\mathbb C^2$.  Lemma~\ref{lem:low} gives
\begin{equation}\label{eq:CN-norm}
\|C_N\|^2_{\HH_N\to\mathbb C^2}
\le C(N+1)^{3/2}.
\end{equation}

Let $\Gamma\in L^\infty((0,1);\mathbb C^{2\times2})$ and assume
\begin{equation}\label{eq:Gamma-bound}
\|\Gamma(t)\|_{\mathbb C^{2\times2}}\le\mu
\quad\text{for almost every }t\in(0,1).
\end{equation}
The diagonal choice
$\Gamma(t)=\operatorname{diag}(\beta_0(t),\beta_\pi(t))$
corresponds to the Robin problem.

\begin{lemma}[Finite-dimensional Duhamel trace bound]
\label{lem:duhamel-trace}
Let $F\in L^2((0,1);\HH_N)$ and let $d$ solve
\begin{equation}\label{eq:forced-zero-data}
\begin{cases}
 d_{tt}+d_t-A_0d=F,&0<t<1,\\
 d(0)=d_t(0)=0.
\end{cases}
\end{equation}
Then
\begin{equation}\label{eq:duhamel-trace}
\|C_Nd\|_{L^2(0,1;\mathbb C^2)}
\le
C e^{2N}(N+1)^{3/4}
\|F\|_{L^2(0,1;\HH_N)}.
\end{equation}
If $F=C_N^*\Gamma y$ with
$y\in L^2((0,1);\mathbb C^2)$, then
\begin{equation}\label{eq:duhamel-boundary-input}
\|C_Nd\|_{L^2(0,1;\mathbb C^2)}
\le
C\mu e^{2N}(N+1)^{3/2}
\|y\|_{L^2(0,1;\mathbb C^2)}.
\end{equation}
\end{lemma}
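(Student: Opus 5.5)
We diagonalize the forced problem \eqref{eq:forced-zero-data} in the basis \(\{\Phi_n\}_{n\le N}\) of \(\HH_N\) and use an explicit Duhamel kernel for each mode. Write \(d=\sum_{n\le N}d_n(t)\Phi_n\) and \(F=\sum_{n\le N}F_n(t)\Phi_n\). Since \(A_0\Phi_n=2n(2n+1)\Phi_n\), each coefficient solves the scalar problem
\[
d_n''+d_n'-2n(2n+1)d_n=F_n,\qquad d_n(0)=d_n'(0)=0.
\]
The characteristic roots are \(2n\) and \(-(2n+1)\), as in \eqref{eq:free-expansion}. Variation of constants therefore gives
\[
d_n(t)=\int_0^t k_n(t-s)F_n(s)\,\dd s,
\qquad
k_n(r)=\frac{e^{2nr}-e^{-(2n+1)r}}{4n+1}.
\]
For \(0\le r\le1\) we have \(0\le k_n(r)\le e^{2n r}/(4n+1)\le e^{2N}\). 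Young's inequality on \((0,1)\), with \(\|k_n\|_{L^1(0,1)}\le e^{2N}\), then yields the modewise bound \(\|d_n\|_{L^2(0,1)}\le e^{2N}\|F_n\|_{L^2(0,1)}\). Summing over \(n\le N\) by Parseval gives
\[
\|d\|_{L^2(0,1;\HH_N)}\le e^{2N}\|F\|_{L^2(0,1;\HH_N)}.
\]

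Next we pass to the trace. For almost every \(t\), \(d(t)\in\HH_N\), so \eqref{eq:CN-norm} (that is, Lemma~\ref{lem:low}) gives \(|C_Nd(t)|\le C(N+1)^{3/4}\|d(t)\|_{\HH}\). Integrating in \(t\) and combining with the previous bound proves \eqref{eq:duhamel-trace}.

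For \eqref{eq:duhamel-boundary-input} we take \(F=C_N^*\Gamma y\). Pointwise in \(t\), the adjoint has the same norm as \(C_N\), and \eqref{eq:Gamma-bound} holds, so
\[
\|F(t)\|_{\HH}\le\|C_N^*\|\,\mu\,|y(t)|\le C\mu(N+1)^{3/4}|y(t)|.
\]
This \(F\) is measurable with values in \(\HH_N\) and lies in \(L^2\). Inserting it into \eqref{eq:duhamel-trace} multiplies the two factors \((N+1)^{3/4}\) to give \((N+1)^{3/2}\), which proves the second estimate.

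The only point needing care is the uniformity of the kernel bound across modes, so that no extra factor appears when summing over \(n\). Because the forcing is diagonal in the basis, there is no cross-mode interaction, and the bound \(k_n\le e^{2N}\) holds uniformly for all \(n\le N\). The loss \(e^{2N}\) comes from the growing branch \(e^{2nt}\) over unit time and is not optimized. No regularity of \(\Gamma\) is used beyond measurability and the bound \eqref{eq:Gamma-bound}, because the argument is carried out entirely in the finite-dimensional space \(\HH_N\).
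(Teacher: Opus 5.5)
Your proposal is correct and is essentially the paper's proof: you use the same diagonal Duhamel kernel $(e^{2nr}-e^{-(2n+1)r})/(4n+1)$ bounded by $e^{2N}$ on $[0,1]$, the $L^2(0,1)$-boundedness of the Volterra integral, the trace bound \eqref{eq:CN-norm}, and $\|C_N^*\Gamma(t)y(t)\|_{\HH}\le C\mu(N+1)^{3/4}|y(t)|$ for the boundary input. The only cosmetic difference is the order of steps: you apply Young's inequality modewise and sum by Parseval before taking the trace pointwise in $t$, whereas the paper bounds $\sup_{0\le r\le 1}\|C_NS_N(r)\|$ directly and then applies Minkowski's inequality.
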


\begin{proof}
For $0\le n\le N$, define
\[
s_n(r)=\frac{e^{2nr}-e^{-(2n+1)r}}{4n+1},
\qquad r\ge0.
\]
The operator family
\[
S_N(r)\Phi_n=s_n(r)\Phi_n
\]
is the zero-displacement solution operator for the scalar equations
$c_n''+c_n'-2n(2n+1)c_n=f_n$.  Therefore
\[
d(t)=\int_0^tS_N(t-s)F(s)\,\dd s.
\]
For $0\le r\le1$,
\[
|s_n(r)|\le 2e^{2Nr},
\]
and consequently
\begin{equation}\label{eq:SN-bound}
\|S_N(r)\|_{\HH_N\to\HH_N}\le2e^{2N}.
\end{equation}
Using \eqref{eq:CN-norm}, \eqref{eq:SN-bound}, and Minkowski's
integral inequality (equivalently, Young's inequality for the Volterra
kernel \(\mathbf 1_{0<s<t}\)), the map
$f\mapsto\int_0^t f(s)\,\dd s$ is bounded on $L^2(0,1)$, and we obtain
\[
\begin{aligned}
\|C_Nd\|_{L^2}
&\le
\sup_{0\le r\le1}\|C_NS_N(r)\|
\left\|\int_0^t\|F(s)\|_{\HH_N}\,\dd s\right\|_{L^2_t}\\
&\le
Ce^{2N}(N+1)^{3/4}\|F\|_{L^2(0,1;\HH_N)},
\end{aligned}
\]
which proves \eqref{eq:duhamel-trace}.  If
$F=C_N^*\Gamma y$, then
\[
\|F(t)\|_{\HH_N}
\le
\|C_N\|\,\|\Gamma(t)\|\,|y(t)|
\le
C\mu(N+1)^{3/4}|y(t)|.
\]
Substitution into \eqref{eq:duhamel-trace} gives
\eqref{eq:duhamel-boundary-input}.
\end{proof}

\begin{theorem}[Robust truncated boundary observability]
\label{thm:robust-truncated}
Let $q\in H^2((0,1);\HH_N)$ solve
\begin{equation}\label{eq:truncated-feedback}
q_{tt}+q_t-A_0q=C_N^*\Gamma(t)C_Nq
\quad\text{in }(0,1),
\end{equation}
where \eqref{eq:Gamma-bound} holds.  There exists an absolute constant
$C_0\ge1$ such that, whenever
\begin{equation}\label{eq:N-mu-condition}
N+1\ge C_0(1+\mu^2),
\end{equation}
one has
\begin{equation}\label{eq:robust-truncated-observability}
\mathcal E_N[q](0)
\le
\exp\bigl(C(N+1)\bigr)
\int_0^1|C_Nq(t)|^2\,\dd t.
\end{equation}
The constant is independent of the measurable matrix function $\Gamma$.
\end{theorem}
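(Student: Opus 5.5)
The plan is a direct comparison with the free dynamics, with no smallness or absorption argument needed. Given \(q\), let \(z\) be the free solution of \(z_{tt}+z_t-A_0z=0\) in \(\HH_N\) with the same Cauchy data, \(z(0)=q(0)\) and \(z_t(0)=q_t(0)\). Then \(z\) has the form \eqref{eq:free-expansion}, and by construction
\[
\mathcal E_N[z](0)=\mathcal E_N[q](0).
\]
The difference \(d=q-z\) solves the zero-data problem \eqref{eq:forced-zero-data} with forcing \(F=C_N^*\Gamma(t)C_Nq\). This forcing lies in \(L^2((0,1);\HH_N)\) because \(q\in H^2((0,1);\HH_N)\) and \(\HH_N\) is finite-dimensional, so the Duhamel representation of Lemma~\ref{lem:duhamel-trace} applies.

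Next, apply \eqref{eq:duhamel-boundary-input} with \(y=C_Nq\). This gives
\[
\|C_Nd\|_{L^2(0,1;\mathbb C^2)}\le C\mu e^{2N}(N+1)^{3/2}\|C_Nq\|_{L^2(0,1;\mathbb C^2)}.
\]
Since \(C_Nz=C_Nq-C_Nd\), the triangle inequality yields
\[
\|C_Nz\|_{L^2(0,1)}\le\bigl(1+C\mu e^{2N}(N+1)^{3/2}\bigr)\|C_Nq\|_{L^2(0,1)}.
\]
We never need the Duhamel term to be small. We only need its operator norm to be at most \(e^{C(N+1)}\), because that loss has the same order as the observability cost already present in Theorem~\ref{thm:free-obs}.

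Now apply Theorem~\ref{thm:free-obs} to \(z\):
\[
\mathcal E_N[q](0)=\mathcal E_N[z](0)\le e^{C(N+1)}\|C_Nz\|_{L^2(0,1)}^2\le e^{C(N+1)}\bigl(1+C\mu e^{2N}(N+1)^{3/2}\bigr)^2\|C_Nq\|_{L^2(0,1)}^2 .
\]
Hypothesis \eqref{eq:N-mu-condition} gives \(\mu^2\le N+1\). Therefore
\[
\mu^2e^{4N}(N+1)^3\le(N+1)^4e^{4N}\le e^{C'(N+1)},
\]
and enlarging the exponential constant gives \eqref{eq:robust-truncated-observability}. The constants depend only on the bounds in Lemma~\ref{lem:duhamel-trace}, \eqref{eq:CN-norm} and Theorem~\ref{thm:free-obs}, and on \(\|\Gamma\|_{L^\infty}\le\mu\). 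So the estimate is uniform over measurable \(\Gamma\), and \(\Gamma\) is never differentiated.

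The only point I expect to need care is the bookkeeping of the Duhamel bound. One has to check that \(\|C_NS_N(r)\|\le Ce^{2N}(N+1)^{3/4}\) uniformly for \(r\in[0,1]\), and that the Volterra integration does not introduce any \(N\)-dependent factor beyond the displayed ones. This is exactly what Lemma~\ref{lem:duhamel-trace} provides. The structural point is that the factor \(e^{2N}\) from the growing branch \(e^{2nt}\) of the propagator is harmless, since it is already of observability size. The hypothesis \(N+1\gtrsim1+\mu^2\) is used only to convert the polynomial factor in \(\mu\) into a factor polynomial in \(N\).
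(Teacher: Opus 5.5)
Your proposal is correct and follows the paper's proof essentially verbatim. You compare $q$ with the free solution $z$ that shares its Cauchy data at $t=0$, bound the trace of the zero-data difference via \eqref{eq:duhamel-boundary-input} with $y=C_Nq$, apply Theorem~\ref{thm:free-obs} to $z$, and use \eqref{eq:N-mu-condition} only to absorb the factor $1+C\mu e^{2N}(N+1)^{3/2}$ into $\exp(C(N+1))$, with no smallness of the Duhamel term required.
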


\begin{proof}
Let $z$ be the free solution with the same Cauchy data as $q$ at $t=0$:
\[
z_{tt}+z_t-A_0z=0,
\qquad
z(0)=q(0),\quad z_t(0)=q_t(0).
\]
Put $d=q-z$.  Then $d$ has zero Cauchy data and satisfies
\[
d_{tt}+d_t-A_0d=C_N^*\Gamma(t)C_Nq.
\]
By Lemma~\ref{lem:duhamel-trace},
\begin{equation}\label{eq:z-trace-vs-q}
\begin{aligned}
\|C_Nz\|_{L^2}
&\le
\|C_Nq\|_{L^2}+\|C_Nd\|_{L^2}\\
&\le
\left[1+C\mu e^{2N}(N+1)^{3/2}\right]
\|C_Nq\|_{L^2}.
\end{aligned}
\end{equation}
The free observability theorem and the equality of the Cauchy data give
\[
\mathcal E_N[q](0)
=
\mathcal E_N[z](0)
\le
e^{C_1(N+1)}\|C_Nz\|_{L^2}^2.
\]
Under \eqref{eq:N-mu-condition}, one has
$\mu\le C\sqrt{N+1}$, and hence
\[
1+C\mu e^{2N}(N+1)^{3/2}
\le
\exp\bigl(C_2(N+1)\bigr).
\]
Combining this with \eqref{eq:z-trace-vs-q} and enlarging the exponential
constant proves \eqref{eq:robust-truncated-observability}.
\end{proof}

\begin{corollary}[Truncated system with boundary input]
\label{cor:truncated-input}
Let $q\in H^2((0,1);\HH_N)$ satisfy
\[
q_{tt}+q_t-A_0q=C_N^*\Gamma(t)y(t),
\qquad
y\in L^2((0,1);\mathbb C^2).
\]
If \eqref{eq:N-mu-condition} holds, then
\begin{equation}\label{eq:truncated-input-observability}
\mathcal E_N[q](0)
\le
\exp\bigl(C(N+1)\bigr)
\left(
\|C_Nq\|_{L^2(0,1)}^2+
\|y\|_{L^2(0,1)}^2
\right).
\end{equation}
\end{corollary}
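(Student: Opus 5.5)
The argument repeats the proof of Theorem~\ref{thm:robust-truncated} with a smaller Duhamel decomposition: only the forcing coming from the boundary input is removed. Let \(z\) be the free solution in \(\HH_N\) with the same Cauchy data as \(q\) at \(t=0\), and put \(d=q-z\). Then \(d(0)=d_t(0)=0\) and
\[
d_{tt}+d_t-A_0d=C_N^*\Gamma(t)y(t).
\]
This is exactly the situation of the second part of Lemma~\ref{lem:duhamel-trace}, with forcing \(F=C_N^*\Gamma y\). Since \(q\in H^2((0,1);\HH_N)\) and \(\HH_N\) is finite dimensional, \(F\in L^2((0,1);\HH_N)\), so the Duhamel representation applies. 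Estimate \eqref{eq:duhamel-boundary-input} gives
\[
\|C_Nd\|_{L^2(0,1)}\le C\mu e^{2N}(N+1)^{3/2}\|y\|_{L^2(0,1)}.
\]

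Next I will control the trace of the free part by the triangle inequality:
\[
\|C_Nz\|_{L^2}\le\|C_Nq\|_{L^2}+C\mu e^{2N}(N+1)^{3/2}\|y\|_{L^2}.
\]
Because \(z\) and \(q\) share Cauchy data, \(\mathcal E_N[q](0)=\mathcal E_N[z](0)\). Theorem~\ref{thm:free-obs} then bounds this energy by \(e^{C_1(N+1)}\|C_Nz\|_{L^2}^2\).

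It remains to absorb the prefactor. Under \eqref{eq:N-mu-condition} we have \(\mu\le C\sqrt{N+1}\), so
\[
\bigl(C\mu e^{2N}(N+1)^{3/2}\bigr)^2\le e^{C_2(N+1)}.
\]
Using \((a+b)^2\le 2a^2+2b^2\) and enlarging the exponential constant then yields \eqref{eq:truncated-input-observability}.

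No step here is a genuine obstacle. The only point needing care is the norm convention for \(\Gamma\): the bound \eqref{eq:Gamma-bound} enters only through \(\|F(t)\|\le\|C_N\|\,\mu\,|y(t)|\), which is already accounted for in Lemma~\ref{lem:duhamel-trace}. In particular, the constant is uniform in the measurable \(\Gamma\), and no differentiation of the coefficient is needed.
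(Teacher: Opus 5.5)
Your proposal is correct and is essentially the paper's own argument. The paper's proof says to repeat the proof of Theorem~\ref{thm:robust-truncated}: compare $q$ with the free solution having the same Cauchy data, bound the zero-data remainder $d$ through \eqref{eq:duhamel-boundary-input} with the general input $y$, apply Theorem~\ref{thm:free-obs}, and absorb the polynomial factors under \eqref{eq:N-mu-condition}, which is exactly what you do.
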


\begin{proof}
Repeat the preceding proof and use
\eqref{eq:duhamel-boundary-input}.  The polynomial factors involving
$\mu$ and $N$ are absorbed into $\exp(C(N+1))$ under
\eqref{eq:N-mu-condition}.
\end{proof}

\begin{remark}[Scope of the truncated estimate]
Theorem~\ref{thm:robust-truncated} concerns the closed finite-dimensional
Robin system with measurable coefficients. Applied to \(q=P_{\le N}w\) in
the full problem, Corollary~\ref{cor:truncated-input} controls the
low-frequency Cauchy data by its endpoint trace and by the full endpoint
trace. A separate high-frequency estimate is required to control possible
cancellation between \(CP_{\le N}w\) and \(CP_{>N}w\), where
\(Cf=(f(0),f(\pi))^{\mathsf T}\).
\end{remark}

\section{Rough-Robin observability on an interior cylinder}
\label{sec:full-observability}

We now remove the low--high cancellation left unresolved by the truncated
argument. The essential point is to observe on an interval strictly inside
a larger cylinder.

For \(d>1\), set
\[
J_d=(-d,1+d),
\qquad I=(0,1),
\]
and define
\[
\mathcal M_d(w)=\sup_{t\in J_d}\|w(t)\|_{\HH}.
\]
We consider weak finite-energy solutions
\[
w\in L^2(J_d;\mathcal V)\cap H^1(J_d;\HH),
\qquad
w_{tt}\in L^2(J_d;\mathcal V^*),
\]
of
\begin{equation}\label{eq:abstract-full-feedback}
w_{tt}+w_t-A_0w=C^*\Gamma(t)Cw,
\qquad
\|\Gamma\|_{L^\infty(J_d)}\le\mu,
\end{equation}
in the sense of Definition~\ref{def:weak-cylinder}. Since
\(H^1(J_d;\HH)\hookrightarrow C(\overline{J_d};\HH)\), the outer norm
\(\mathcal M_d(w)\) is well-defined. The spectral Galerkin justification
of all mode expansions and limiting arguments is given in
Appendix~\ref{app:functional}. The logarithmic variable is used only in the
elliptic cylinder formulation.

For later use, write
\[
p_n=\Phi_n(0),
\qquad
v_n=C\Phi_n=p_n\begin{pmatrix}1\\(-1)^n\end{pmatrix}.
\]
By Proposition~\ref{prop:endpoint-growth},
\begin{equation}\label{eq:vn-growth}
|v_n|^2\asymp(n+1)^{1/2}.
\end{equation}

\begin{lemma}[Scalar Green kernel with an interior margin]
\label{lem:scalar-green-margin}
Let \(\lambda_n=2n(2n+1)\), \(n\ge1\), and let
\(c_n''+c_n'-\lambda_nc_n=f_n\) on \(J_d=(a,b)\). There is a
decomposition
\[
c_n=h_n+\mathcal G_nf_n,
\]
where \(h_n\) solves the homogeneous equation with the same values as
\(c_n\) at \(a,b\), while \(\mathcal G_nf_n\) has zero endpoint values.
For \(t\in I=(0,1)\),
\begin{equation}\label{eq:homogeneous-margin}
|h_n(t)|
\le
C_d e^{-cnd}
\bigl(|c_n(a)|+|c_n(b)|\bigr).
\end{equation}
The Green kernel \(G_n(t,s)\) of \(\mathcal G_n\) satisfies
\begin{equation}\label{eq:green-pointwise}
|G_n(t,s)|
\le
\frac{C}{n+1}e^{-c(n+1)|t-s|},
\end{equation}
and
\begin{equation}\label{eq:green-l2}
\sup_{t\in J_d}
\int_{J_d}|G_n(t,s)|^2\,\dd s
\le
C(n+1)^{-3}.
\end{equation}
\end{lemma}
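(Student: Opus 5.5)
The scalar equation $c''+c'-\lambda_n c=0$ has characteristic roots $r_+=2n$ and $r_-=-(2n+1)$, so everything reduces to explicit exponentials. I will set $h_n$ to be the unique homogeneous solution with $h_n(a)=c_n(a)$ and $h_n(b)=c_n(b)$. This is well defined because the Dirichlet problem for $c''+c'-\lambda_n c$ on $(a,b)$ is uniquely solvable when $\lambda_n>0$: multiplying the homogeneous equation with zero data by $e^{t}\bar c$ and integrating gives $\int e^t(|c'|^2+\lambda_n|c|^2)=0$. Then $c_n-h_n$ solves the forced equation with zero endpoint values, and so it equals $\mathcal G_nf_n$, where $\mathcal G_n$ is the Dirichlet Green operator.

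For \eqref{eq:homogeneous-margin}, I write $h_n=\alpha e^{2n(t-b)}+\gamma e^{-(2n+1)(t-a)}$, normalizing each exponential at the endpoint where it is largest. The boundary conditions form a $2\times2$ system whose off-diagonal entries are $e^{-2n(b-a)}$ and $e^{-(2n+1)(b-a)}$. These are at most $e^{-2(b-a)}<1$ for $n\ge1$, since $b-a=1+2d>3$. The system is therefore a uniformly small perturbation of the identity, and $|\alpha|+|\gamma|\le C(|c_n(a)|+|c_n(b)|)$. For $t\in(0,1)$ the distances satisfy $b-t\ge d$ and $t-a\ge d$, so each exponential is bounded by $e^{-2nd}$, which gives the claimed bound with $c=2$.

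For the Green kernel, I will use the standard formula $G_n(t,s)=u_1(\min)\,u_2(\max)\,e^{s}/\mathcal W$. Here $u_1$ is the homogeneous solution vanishing at $a$ and $u_2$ the one vanishing at $b$. The factor $e^s$ comes from writing the equation in divergence form $(e^tc')'-\lambda_ne^tc=e^tf$, which makes $e^t(u_1u_2'-u_1'u_2)=\mathcal W$ constant. I take
\[
u_1(t)=e^{2n(t-a)}-e^{-(2n+1)(t-a)},\qquad u_2(t)=e^{-(2n+1)(t-b)}-e^{2n(t-b)},
\]
and compute $\mathcal W$ directly. It is $-(4n+1)e^{a}e^{2n(b-a)}\bigl(1-e^{-(4n+1)(b-a)}\bigr)$ up to bounded factors, so $|\mathcal W|\asymp (n+1)e^{a+2n(b-a)}$.

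Take $t<s$, the other case being symmetric. Then $|u_1(t)|\le e^{2n(t-a)}$, $|u_2(s)|\le 2e^{(2n+1)(b-s)}$ and $e^{s}\le e^{b}$, and collecting exponents gives
\[
|G_n(t,s)|\lesssim (n+1)^{-1}e^{-2n(s-t)}\cdot e^{(b-s)+(s-a)-(b-a)}\cdot(\text{bounded}) .
\]
Both this case and $s<t$ need careful bookkeeping of the $e^{s}$ weight and the $\pm\tfrac12$ shift between the roots. With that done, one gets $|G_n|\le C(n+1)^{-1}e^{-c(n+1)|t-s|}$, where the constant may depend on $d$. Equivalently, conjugating by $e^{t/2}$ turns the operator into $\partial_t^2-(2n+\tfrac12)^2$ and makes the kernel symmetric. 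This conjugation is the cleanest route: the kernel becomes
\[
\frac{\sinh(\kappa(\min-a))\,\sinh(\kappa(b-\max))}{\kappa\sinh(\kappa(b-a))},\qquad \kappa=2n+\tfrac12 ,
\]
multiplied by $e^{-(t-s)/2}$. This is bounded by $\kappa^{-1}e^{-\kappa|t-s|}e^{|t-s|/2}\le C(n+1)^{-1}e^{-2n|t-s|}$, which gives \eqref{eq:green-pointwise} with $c=1$ for $n\ge1$.

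Finally, \eqref{eq:green-l2} follows by integration: $\int (n+1)^{-2}e^{-2c(n+1)|t-s|}\,ds\le C(n+1)^{-3}$, uniformly in $t$. The only real obstacle is keeping the first-order term $c'$ from spoiling the symmetric exponential decay; the $e^{t/2}$ conjugation handles it, at the cost of a factor $e^{|t-s|/2}$ that the decay rate $2n+\tfrac12$ absorbs when $n\ge1$.
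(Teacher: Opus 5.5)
Your proposal is correct and follows essentially the paper's route. The paper also conjugates by $c_n=e^{-t/2}r_n$ to reach $\partial_t^2-\kappa_n^2$ with $\kappa_n=2n+\tfrac12$, bounds the sinh Dirichlet kernel by $C\kappa_n^{-1}e^{-\kappa_n|t-s|}$, absorbs the factor $e^{(s-t)/2}$ using $\kappa_n-\tfrac12=2n\ge n+1$, and integrates to get \eqref{eq:green-l2}. The only variation is that you prove \eqref{eq:homogeneous-margin} from the unconjugated exponentials $e^{2n(t-b)}$, $e^{-(2n+1)(t-a)}$ via a near-identity $2\times2$ system instead of the sinh interpolation formula, which is equivalent. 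In your discarded direct Wronskian computation the factor should be $e^{b}$, not $e^{a}$; with that correction the bound there is also independent of $d$.
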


\begin{proof}
Put \(c_n(t)=e^{-t/2}r_n(t)\) and
\(\kappa_n=2n+\tfrac12\). Then
\[
r_n''-\kappa_n^2r_n=e^{t/2}f_n.
\]
The homogeneous interpolation formula on \((a,b)\) is
\[
r_n^h(t)
=
\frac{\sinh(\kappa_n(b-t))}{\sinh(\kappa_n(b-a))}r_n(a)
+
\frac{\sinh(\kappa_n(t-a))}{\sinh(\kappa_n(b-a))}r_n(b).
\]
Since \(\operatorname{dist}(I,\{a,b\})=d\), this gives
\eqref{eq:homogeneous-margin}. The Dirichlet Green kernel for
\(\partial_t^2-\kappa_n^2\) is
\[
\widetilde G_n(t,s)
=-\frac{
\sinh(\kappa_n(t_<-a))
\sinh(\kappa_n(b-t_>))
}{
\kappa_n\sinh(\kappa_n(b-a))
},
\]
where \(t_<=\min\{t,s\}\) and \(t_>=\max\{t,s\}\). Hence
\[
|\widetilde G_n(t,s)|
\le
\frac{C}{\kappa_n}e^{-\kappa_n|t-s|}.
\]
Returning to \(c_n\) multiplies the kernel by \(e^{(s-t)/2}\), which is
absorbed by the exponential because \(\kappa_n-1/2=2n\). This proves
\eqref{eq:green-pointwise}; integration yields
\eqref{eq:green-l2}.
\end{proof}

\begin{lemma}[High-frequency endpoint decoupling]
\label{lem:high-trace-decoupling}
There exist constants \(C,c>0\) such that, for every integer \(N\ge1\),
\begin{equation}\label{eq:high-trace-decoupling}
\|CP_{>N}w\|_{L^2(I)}
\le
C\mu N^{-1/2}\|Cw\|_{L^2(J_d)}
+
C_de^{-cNd}\mathcal M_d(w).
\end{equation}
Moreover,
\begin{equation}\label{eq:high-bulk-decoupling}
\|P_{>N}w(0)\|_{\HH}^2
\le
C\mu^2N^{-3/2}\|Cw\|_{L^2(J_d)}^2
+
C_de^{-cNd}\mathcal M_d(w)^2.
\end{equation}
\end{lemma}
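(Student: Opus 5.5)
We expand $w$ in the angular basis, $w(t)=\sum_n c_n(t)\Phi_n$, with $c_n(t)=(w(t),\Phi_n)_{\HH}$. Testing \eqref{eq:abstract-full-feedback} against $\Phi_n$ (justified by the Galerkin framework of Appendix~\ref{app:functional}) gives the scalar equations
\[
c_n''+c_n'-\lambda_nc_n=f_n,\qquad f_n(t)=(\Gamma(t)Cw(t),v_n)_{\mathbb C^2},
\]
so that $|f_n(t)|\le\mu|v_n||Cw(t)|$. For $n>N\ge1$ we apply Lemma~\ref{lem:scalar-green-margin} on $J_d=(a,b)$ and write $c_n=h_n+\mathcal G_nf_n$. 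Accordingly, $P_{>N}w=H+G$, where $H=\sum_{n>N}h_n\Phi_n$ and $G=\sum_{n>N}(\mathcal G_nf_n)\Phi_n$. We estimate the two pieces separately, both in the trace and in the bulk.

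\emph{Homogeneous part.} On $I$ we have $|h_n(t)|\le C_de^{-cnd}(|c_n(a)|+|c_n(b)|)\le 2C_de^{-cnd}\mathcal M_d(w)$, since $|c_n|\le\|w\|_{\HH}$. Hence
\[
|CH(t)|\le\sum_{n>N}|v_n||h_n(t)|\le C_d\,\mathcal M_d(w)\sum_{n>N}(n+1)^{1/4}e^{-cnd}\le C_de^{-c'Nd}\mathcal M_d(w),
\]
with the polynomial factor absorbed by slightly reducing $c$; here $d>1$ is used. The bulk bound $\|H(0)\|_{\HH}^2\le\sum_{n>N}|h_n(0)|^2\le C_de^{-cNd}\mathcal M_d^2$ follows in the same way. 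One small point: $t=0$ is an endpoint of $I$, but it is still at distance $d$ from $\{a,b\}$, so \eqref{eq:homogeneous-margin} applies on $\overline I$.

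\emph{Forced part: the main step.} The bulk estimate is straightforward. By Cauchy--Schwarz and \eqref{eq:green-l2},
\[
|\mathcal G_nf_n(0)|^2\le C(n+1)^{-3}\mu^2|v_n|^2\|Cw\|_{L^2(J_d)}^2\le C\mu^2(n+1)^{-5/2}\|Cw\|^2,
\]
and summing over $n>N$ gives $C\mu^2N^{-3/2}\|Cw\|^2$, which is \eqref{eq:high-bulk-decoupling}.

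The trace is the delicate point. A naive triangle inequality gives
\[
\|CG\|_{L^2(I)}\le\sum_{n>N}|v_n|\,\|\mathcal G_nf_n\|_{L^2},
\]
and with $\|\mathcal G_n\|_{L^2\to L^2}\lesssim(n+1)^{-2}$ (from \eqref{eq:green-pointwise} via Schur's test) this yields only $\mu\sum_{n>N}(n+1)^{1/2}(n+1)^{-2}\|Cw\|\asymp\mu N^{-1/2}\|Cw\|$. In fact this already gives the claimed rate, since $|v_n|^2\asymp n^{1/2}$ makes each term $O(n^{-3/2})$. To be safe, I would instead realize $CG$ as the operator with matrix kernel $K(t,s)=\sum_{n>N}G_n(t,s)\,v_nv_n^{*}\,\Gamma(s)$, and bound it by Schur's test using \eqref{eq:green-pointwise}:
\[
\sup_t\int_{J_d}\|K(t,s)\|\,\dd s\le\mu\sum_{n>N}\frac{C(n+1)^{1/2}}{(n+1)^2}\le C\mu N^{-1/2},
\]
with the same bound for the integral over $t$. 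This shows $\|CG\|_{L^2(I)}\le C\mu N^{-1/2}\|Cw\|_{L^2(J_d)}$. Combining it with the homogeneous estimate proves \eqref{eq:high-trace-decoupling}.

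The remaining obstacle is justification rather than estimation: the mode sums must converge in $L^2(I;\mathbb C^2)$, and the trace must commute with the infinite expansion. Both follow because the kernel series converges absolutely (its terms are $O(n^{-3/2})$) and because $C$ is continuous on $\mathcal V$ (Theorem~\ref{thm:trace}), applied to the Galerkin truncations followed by a limit.
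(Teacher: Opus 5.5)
Your proposal is correct and follows essentially the same route as the paper. Both use the Dirichlet Green decomposition of each mode $n>N$ on $J_d$, the exponential interior margin for the homogeneous part, Schur's test on the matrix kernel $\sum_{n>N}G_n(t,s)v_nv_n^*\Gamma(s)$ for the forced trace, and Cauchy--Schwarz with \eqref{eq:green-l2} for the bulk term at $t=0$. Your side remark is also correct: the mode-by-mode triangle inequality with $\|\mathcal G_n\|_{L^2\to L^2}\lesssim(n+1)^{-2}$ already gives the $\mu N^{-1/2}$ rate, so the Schur argument on the summed kernel is a convenience rather than a necessity.
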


\begin{proof}
Expand \(w=\sum_{n\ge0}c_n\Phi_n\). The mode equation is
\[
c_n''+c_n'-\lambda_nc_n
=
\langle\Gamma(t)Cw(t),v_n\rangle_{\mathbb C^2}.
\]
Apply Lemma~\ref{lem:scalar-green-margin} to every \(n>N\). The forced
high-frequency endpoint trace is an integral operator with matrix kernel
\[
K_N(t,s)
=
\sum_{n>N}G_n(t,s)v_nv_n^*\Gamma(s).
\]
Using \eqref{eq:vn-growth} and \eqref{eq:green-pointwise},
\[
\|K_N(t,s)\|
\le
C\mu
\sum_{n>N}(n+1)^{-1/2}
 e^{-c(n+1)|t-s|}.
\]
Consequently,
\[
\sup_t\int_{J_d}\|K_N(t,s)\|\,\dd s
+
\sup_s\int_I\|K_N(t,s)\|\,\dd t
\le
C\mu\sum_{n>N}(n+1)^{-3/2}
\le C\mu N^{-1/2}.
\]
Schur's test gives the first term in
\eqref{eq:high-trace-decoupling}. For the homogeneous part,
\eqref{eq:homogeneous-margin}, Cauchy--Schwarz, and
\eqref{eq:vn-growth} give
\[
\|C h_{>N}\|_{L^2(I)}
\le
C_de^{-cNd}
\bigl(\|w(-d)\|_{\HH}+\|w(1+d)\|_{\HH}\bigr),
\]
which proves \eqref{eq:high-trace-decoupling}.

At \(t=0\), the forced coefficient satisfies, by
\eqref{eq:green-l2},
\[
\begin{aligned}
|c_n^{\rm f}(0)|^2
&\le
\|G_n(0,\cdot)\|_{L^2(J_d)}^2
\|\langle\Gamma Cw,v_n\rangle\|_{L^2(J_d)}^2\\
&\le
C\mu^2(n+1)^{-3}|v_n|^2
\|Cw\|_{L^2(J_d)}^2.
\end{aligned}
\]
Summing and using \(|v_n|^2\le C(n+1)^{1/2}\) yields
\[
\sum_{n>N}|c_n^{\rm f}(0)|^2
\le
C\mu^2N^{-3/2}\|Cw\|_{L^2(J_d)}^2.
\]
The homogeneous contribution follows directly from
\eqref{eq:homogeneous-margin} after summing in \(n\). This proves
\eqref{eq:high-bulk-decoupling}.
\end{proof}

\begin{theorem}[Full rough-Robin interior-cylinder observability]
\label{thm:full-cylinder}
There exist an absolute margin \(d_*>1\) and constants
\(C_0,C,c>0\) such that the following holds. Let \(w\) solve
\eqref{eq:abstract-full-feedback} on \(J_{d_*}\), with
\(\|\Gamma\|_\infty\le\mu\). Then, for every integer
\begin{equation}\label{eq:full-N-condition}
N+1\ge C_0(1+\mu^2),
\end{equation}
one has
\begin{equation}\label{eq:full-cylinder-spectral}
\|w(0)\|_{\HH}^2
\le
\exp\bigl(C(N+1)\bigr)
\|Cw\|_{L^2(J_{d_*})}^2
+
Ce^{-cN}\mathcal M_{d_*}(w)^2.
\end{equation}
\end{theorem}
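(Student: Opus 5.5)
Split \(w(0)=P_{\le N}w(0)+P_{>N}w(0)\). The high part is controlled directly by \eqref{eq:high-bulk-decoupling} of Lemma~\ref{lem:high-trace-decoupling}. Under \eqref{eq:full-N-condition} we have \(\mu^2N^{-3/2}\le C\), so
\[
\|P_{>N}w(0)\|_{\HH}^2\le C\|Cw\|_{L^2(J_{d_*})}^2+C_{d_*}e^{-cNd_*}\mathcal M_{d_*}(w)^2 .
\]
This is already of the required form once \(d_*\) is fixed.

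For the low part we apply Corollary~\ref{cor:truncated-input} to \(q=P_{\le N}w\) on \(I=(0,1)\). Projecting \eqref{eq:abstract-full-feedback} onto \(\HH_N\), and justifying this by the Galerkin argument of the appendix, gives
\[
q_{tt}+q_t-A_0q=C_N^*\Gamma(t)\,Cw(t).
\]
This is the truncated system with boundary input \(y=Cw\): the adjoint \(C_N^*\) is just the restriction of \(C^*\), paired against \(\HH_N\). The corollary then yields
\[
\|q(0)\|_{\HH}^2\le\mathcal E_N[q](0)\le e^{C(N+1)}\bigl(\|C_Nq\|_{L^2(I)}^2+\|Cw\|_{L^2(I)}^2\bigr).
\]
Here \(\|q(0)\|_{\HH}\le\|(T+I)q(0)\|_{\HH}\), and \(q\in H^2(I;\HH_N)\) holds because in finite dimensions the equation gives \(q_{tt}\in L^2\).

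The main obstacle is the low trace \(\|C_Nq\|_{L^2(I)}\). It is not observed directly, since only the full trace \(Cw\) is. We write \(C_Nq=Cw-CP_{>N}w\) and use \eqref{eq:high-trace-decoupling}:
\[
\|C_Nq\|_{L^2(I)}\le (1+C\mu N^{-1/2})\|Cw\|_{L^2(J_{d_*})}+C_{d_*}e^{-cNd_*}\mathcal M_{d_*}(w).
\]
The difficulty is that this exponentially small tail gets multiplied by the observability cost \(e^{C(N+1)}\). To beat it, choose the margin \(d_*\) absolute and large enough that \(c\,d_*\ge 2C+1\), where \(C\) is the absolute constant of Corollary~\ref{cor:truncated-input} (and \(d_*>1\)). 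Then
\[
e^{C(N+1)}e^{-2cNd_*}\le C'e^{-cN}.
\]
This is the role of the interior margin: the homogeneous high-frequency Dirichlet part decays at rate \(e^{-cnd}\) with \(c\) independent of \(N\), while the cost grows only like \(e^{CN}\).

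Finally, sum the low and high estimates. The polynomial factors \((1+\mu N^{-1/2})^2\le C\) and \(\|Cw\|_{L^2(I)}\le\|Cw\|_{L^2(J_{d_*})}\) are absorbed into \(\exp(C(N+1))\), which gives \eqref{eq:full-cylinder-spectral}. One detail must be checked: the constant \(C_{d_*}\) in Lemma~\ref{lem:scalar-green-margin} must not depend on \(N\). It depends only on \(d_*\), which is now fixed, so it is absolute.
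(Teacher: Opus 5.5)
Your proposal is correct and is essentially the paper's proof. You apply Corollary~\ref{cor:truncated-input} to \(q=P_{\le N}w\) with input \(y=Cw\), recover the unobserved low trace through \(C_Nq=Cw-CP_{>N}w\) and \eqref{eq:high-trace-decoupling}, fix an absolute margin \(d_*\) so that the homogeneous decay \(e^{-cNd_*}\) beats the observability cost \(e^{C(N+1)}\), and bound \(P_{>N}w(0)\) by \eqref{eq:high-bulk-decoupling} using \(\mu^2N^{-3/2}\le C\). The only differences are cosmetic: you treat the high part first, and you state the margin condition as \(cd_*\ge 2C+1\) for the squared tail rather than the paper's \(cd_*>C_2+2\).
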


\begin{proof}
Put \(q=P_{\le N}w\). On \(I=(0,1)\),
\[
q_{tt}+q_t-A_0q=C_N^*\Gamma(t)Cw.
\]
Corollary~\ref{cor:truncated-input} gives
\begin{equation}\label{eq:low-before-decoupling}
\mathcal E_N[q](0)
\le
\exp(C_1(N+1))
\left(
\|C_Nq\|_{L^2(I)}^2+
\|Cw\|_{L^2(I)}^2
\right).
\end{equation}
Since \(C_Nq=Cw-CP_{>N}w\),
Lemma~\ref{lem:high-trace-decoupling} and
\eqref{eq:full-N-condition}, with \(C_0\) large enough, yield
\[
\|C_Nq\|_{L^2(I)}
\le
C\|Cw\|_{L^2(J_{d_*})}
+C_{d_*}e^{-cNd_*}\mathcal M_{d_*}(w).
\]
Substitution into \eqref{eq:low-before-decoupling} gives
\begin{equation}\label{eq:low-after-decoupling}
\mathcal E_N[q](0)
\le
\exp(C_2(N+1))\|Cw\|_{L^2(J_{d_*})}^2
+C\exp(C_2N-cNd_*)\mathcal M_{d_*}(w)^2.
\end{equation}
Choose the fixed margin \(d_*\) so large that
\(cd_*>C_2+2\). The second term in
\eqref{eq:low-after-decoupling} is then bounded by
\(Ce^{-c_1N}\mathcal M_{d_*}(w)^2\).

Finally,
\[
\|w(0)\|_{\HH}^2
\le
2\|q(0)\|_{\HH}^2
+2\|P_{>N}w(0)\|_{\HH}^2.
\]
The first term is controlled by \(\mathcal E_N[q](0)\), and the second by
\eqref{eq:high-bulk-decoupling}. Since
\(\mu^2N^{-3/2}\le C\) under \eqref{eq:full-N-condition}, all observation
terms are absorbed into \(\exp(C(N+1))\|Cw\|_{L^2(J_{d_*})}^2\).
Renaming the constants proves \eqref{eq:full-cylinder-spectral}.
\end{proof}

\begin{corollary}[Critical interpolation cost]
\label{cor:full-interpolation}
There exist \(C>0\) and \(\vartheta\in(0,1)\) such that every solution in
Theorem~\ref{thm:full-cylinder} satisfies
\begin{equation}\label{eq:full-interpolation}
\|w(0)\|_{\HH}^2
\le
C\exp\!\bigl(C(1+\mu^2)\bigr)
\mathcal M_{d_*}(w)^{2(1-\vartheta)}
\|Cw\|_{L^2(J_{d_*})}^{2\vartheta}.
\end{equation}
\end{corollary}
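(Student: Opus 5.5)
The plan is to optimize the spectral estimate \eqref{eq:full-cylinder-spectral} in the free integer \(N\), subject to the constraint \eqref{eq:full-N-condition}. Abbreviate \(\mathcal M=\mathcal M_{d_*}(w)\) and \(\mathcal O=\|Cw\|_{L^2(J_{d_*})}\). Theorem~\ref{thm:full-cylinder} gives, for every integer \(N\) with \(N+1\ge N_0:=\lceil C_0(1+\mu^2)\rceil\),
\[
\|w(0)\|_{\HH}^2\le e^{C(N+1)}\mathcal O^2+Ce^{-cN}\mathcal M^2 .
\]

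\emph{Trivial cases.} If \(\mathcal O=0\), letting \(N\to\infty\) gives \(w(0)=0\), and \eqref{eq:full-interpolation} holds trivially. Otherwise, let \(\Lambda=\log(\mathcal M/\mathcal O)\). If \(\Lambda\le0\), which can happen because \(\mathcal O\) is not a priori bounded by \(\mathcal M\), take \(N=N_0\). Then \(\|w(0)\|^2\le Ce^{CN_0}(\mathcal O^2+\mathcal M^2)\le Ce^{C(1+\mu^2)}\mathcal O^2\). Since \(\mathcal O\ge\mathcal M\), we have \(\mathcal O^2\le\mathcal M^{2(1-\vartheta)}\mathcal O^{2\vartheta}\) for any \(\vartheta\in(0,1)\), so \eqref{eq:full-interpolation} holds in this case.

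\emph{Main case \(\Lambda>0\).} Balance the two terms by choosing \(N\) so that \(e^{(C+c)N}\approx(\mathcal M/\mathcal O)^2\), that is, \(N\approx 2\Lambda/(C+c)\), and then take
\[
N=\max\{N_0,\lceil 2\Lambda/(C+c)\rceil\}.
\]
\begin{itemize}
\item If the second entry is the larger one, then with \(\vartheta=c/(C+c)\) both terms are bounded by \(Ce^{C}\,\mathcal M^{2(1-\vartheta)}\mathcal O^{2\vartheta}\). Indeed, \(e^{CN}\mathcal O^2\le e^{C}(\mathcal M/\mathcal O)^{2C/(C+c)}\mathcal O^2\), and \(e^{-cN}\mathcal M^2\le(\mathcal O/\mathcal M)^{2c/(C+c)}\mathcal M^2\). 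Both reduce to the same product.
\item If instead \(N=N_0\), then \(\Lambda\lesssim N_0\). The first term is \(e^{CN_0}\mathcal O^2\le e^{CN_0}\mathcal M^{2(1-\vartheta)}\mathcal O^{2\vartheta}(\mathcal O/\mathcal M)^{2(1-\vartheta)}\le e^{CN_0}\mathcal M^{2(1-\vartheta)}\mathcal O^{2\vartheta}\). The second term is \(e^{-cN_0}\mathcal M^2=e^{-cN_0}\mathcal M^{2(1-\vartheta)}\mathcal O^{2\vartheta}e^{2\vartheta\Lambda}\le e^{C'N_0}\mathcal M^{2(1-\vartheta)}\mathcal O^{2\vartheta}\).
\end{itemize}
Since \(N_0\le C(1+\mu^2)\), every case yields the prefactor \(C\exp(C(1+\mu^2))\).

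The only points needing care are the integrality and lower constraint on \(N\), which is where the factor \(\exp(C(1+\mu^2))\) enters, and the fact that \(\vartheta\) must come out independent of \(\mu\). It does, because it depends only on the constants \(C,c\) of Theorem~\ref{thm:full-cylinder}, which are uniform in \(\Gamma\) and \(\mu\). No step is a genuine obstacle; this is the standard log-convexity optimization.
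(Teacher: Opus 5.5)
Your main case \(\Lambda>0\) is correct, with \(\vartheta=c/(C+c)\). Writing \(e^{CN}\) for \(e^{C(N+1)}\) only costs a factor \(e^{C}\), and in the sub-case \(N=N_0\) you even have \(e^{-cN_0}e^{2\vartheta\Lambda}\le1\). The gap is the case \(\Lambda\le0\), i.e. \(\mathcal O\ge\mathcal M\). There you claim \(\mathcal O^2\le\mathcal M^{2(1-\vartheta)}\mathcal O^{2\vartheta}\), but this inequality is reversed, since \(\mathcal M^{2(1-\vartheta)}\mathcal O^{2\vartheta}=\mathcal O^2(\mathcal M/\mathcal O)^{2(1-\vartheta)}\le\mathcal O^2\). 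This branch also cannot be rescued from the estimate of Theorem~\ref{thm:full-cylinder} alone. Every admissible \(N\) leaves a first term of at least \(e^{C(N_0+1)}\mathcal O^2\). The target, by contrast, is smaller than \(Ce^{C(1+\mu^2)}\mathcal O^2\) by the factor \((\mathcal M/\mathcal O)^{2(1-\vartheta)}\), which is arbitrarily small when \(\mathcal O/\mathcal M\) is large. As you yourself note, that regime is not excluded.

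The missing input is the elementary bound \(\|w(0)\|_{\HH}\le\mathcal M_{d_*}(w)\), which holds because \(0\in J_{d_*}\). It settles this branch without any spectral estimate: \(\|w(0)\|_{\HH}^2\le\mathcal M^2=\mathcal M^{2(1-\vartheta)}\mathcal M^{2\vartheta}\le\mathcal M^{2(1-\vartheta)}\mathcal O^{2\vartheta}\). With that repair your proof is complete. It differs from the paper's only in how \(N\) is chosen. The paper takes \(N\approx C_0(1+\mu^2)+c^{-1}\log(2C\mathcal M^2/\|w(0)\|_{\HH}^2)\), adapted to the unknown ratio \(\mathcal M/\|w(0)\|_{\HH}\) rather than to \(\mathcal M/\mathcal O\). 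It then absorbs the tail into \(\tfrac12\|w(0)\|_{\HH}^2\) and obtains \(\|w(0)\|_{\HH}^{2(1+\alpha)}\le Ce^{C(1+\mu^2)}\mathcal M^{2\alpha}\mathcal O^2\) with \(\alpha=C/c\). Taking a root gives the same \(\vartheta=(1+\alpha)^{-1}=c/(C+c)\). In the paper's route, the bound \(\|w(0)\|_{\HH}\le\mathcal M\) is what makes the chosen \(N\) admissible, so no case split on \(\mathcal M\) versus \(\mathcal O\) is needed. In your balancing route the same bound is exactly what handles the second branch.
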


\begin{proof}
If \(\|w(0)\|_{\HH}=0\), there is nothing to prove. Otherwise choose
\[
N
=
\left\lceil
C_0(1+\mu^2)
+
\frac1c\log\!\left(
\frac{2C\mathcal M_{d_*}(w)^2}{\|w(0)\|_{\HH}^2}
\right)
\right\rceil.
\]
Since \(\mathcal M_{d_*}(w)\ge\|w(0)\|_{\HH}\), this is admissible and the
last term in \eqref{eq:full-cylinder-spectral} is at most
\(\frac12\|w(0)\|_{\HH}^2\). Rearranging gives
\[
\|w(0)\|_{\HH}^{2(1+\alpha)}
\le
C e^{C(1+\mu^2)}
\mathcal M_{d_*}(w)^{2\alpha}
\|Cw\|_{L^2(J_{d_*})}^2
\]
for an absolute \(\alpha>0\). Taking the power
\(1/(1+\alpha)\) and setting \(\vartheta=(1+\alpha)^{-1}\) proves
\eqref{eq:full-interpolation}.
\end{proof}

\begin{remark}[Why an outer norm is retained]
A frequency-independent full observability estimate cannot hold. Indeed,
for \(z_n(t,\varphi)=e^{-(2n+1)t}\Phi_n(\varphi)\), the initial energy is
\(2(2n+1)^2\), whereas
\[
\int_0^1|Cz_n(t)|^2\,\dd t
=2|p_n|^2\int_0^1e^{-2(2n+1)t}\,\dd t
\asymp n^{-1/2}.
\]
The quotient grows like \(n^{5/2}\); hence a spectral cutoff or an outer
norm is structural rather than technical.
\end{remark}

\section{Critical annular interpolation for the quarter-Laplacian extension}
\label{sec:quarter-corollary}

Let \(V\in L^\infty(\mathbb R)\) and let
\(u\in L^\infty(\mathbb R)\cap H^{1/4}_{\mathrm{loc}}(\mathbb R)\)
be a distributional solution of \eqref{eq:frac-sch}. Let \(W\) be its
Caffarelli--Silvestre extension after the change \(y=z^2/2\), and fix a
Grushin radius \(\rho_0>0\). Proposition~\ref{prop:extension-realization}
shows that the associated logarithmic profile belongs to the weak energy
class required in Theorem~\ref{thm:full-cylinder}. Define the angular mass
\[
\mathscr H_{\rho_0}(W)
=
\int_0^\pi
\left|
W\!\left(
\frac{\rho_0^2}{2}\cos\varphi,
\rho_0\sqrt{\sin\varphi}
\right)
\right|^2
(\sin\varphi)^{1/2}\,\dd\varphi.
\]
Let
\[
\mathscr M_{\rho_0}(W)
=
\sup_{\tau\in J_{d_*}}
\left\|
W\!\left(
\frac{\rho_0^2e^{2\tau}}2\cos\varphi,
\rho_0e^\tau\sqrt{\sin\varphi}
\right)
\right\|_{\HH}.
\]
The associated boundary annulus is
\[
\mathcal A_{\rho_0}
=
\left\{
 x\in\mathbb R:
 \frac{\rho_0^2}{2}e^{-2d_*}<|x|<
 \frac{\rho_0^2}{2}e^{2(1+d_*)}
\right\}.
\]

\begin{corollary}[Quarter-Laplacian annular interpolation]
\label{cor:quarter-annular}
There exist \(C>0\) and \(\vartheta\in(0,1)\) such that
\begin{equation}\label{eq:quarter-annular}
\mathscr H_{\rho_0}(W)
\le
C\exp\!\left[
C\left(1+\rho_0^2
\|V\|_{L^\infty(\mathcal A_{\rho_0})}^2\right)
\right]
\mathscr M_{\rho_0}(W)^{2(1-\vartheta)}
\left(
\int_{\mathcal A_{\rho_0}}
\frac{|u(x)|^2}{|x|}\,\dd x
\right)^{\vartheta}.
\end{equation}
\end{corollary}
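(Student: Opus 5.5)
We plan to deduce the corollary from Corollary~\ref{cor:full-interpolation}, applied to the logarithmic profile recentred at \(\rho_0\):
\[
w(\tau,\varphi)=W\!\left(\frac{\rho_0^2e^{2\tau}}{2}\cos\varphi,\ \rho_0e^{\tau}\sqrt{\sin\varphi}\right),\qquad t=t_0+\tau,\ t_0=\log\rho_0 .
\]
By Proposition~\ref{prop:extension-realization}, \(w\) lies in the weak energy class on \(J_{d_*}\) and solves \eqref{eq:abstract-full-feedback} with \(\Gamma(\tau)=\operatorname{diag}(\beta_0(t_0+\tau),\beta_\pi(t_0+\tau))\). The identification of the three quantities is then immediate from the definitions:
\[
\|w(0)\|_{\HH}^2=\mathscr H_{\rho_0}(W),\qquad \mathcal M_{d_*}(w)=\mathscr M_{\rho_0}(W).
\]

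The first step is to bound \(\mu=\|\Gamma\|_{L^\infty(J_{d_*})}\). By \eqref{eq:beta-exact} and \eqref{eq:beta-scale},
\[
|\beta_{0,\pi}(t_0+\tau)|\le 2\kappa_{1/4}\,\rho_0e^{\tau}\,\|V\|_{L^\infty(\mathcal A_{\rho_0})},
\]
since the points \(\pm\rho_0^2e^{2\tau}/2\) with \(\tau\in J_{d_*}\) sweep exactly \(\mathcal A_{\rho_0}\). Because \(\tau\le 1+d_*\) and \(d_*\) is absolute, this gives \(\mu\le C\rho_0\|V\|_{L^\infty(\mathcal A_{\rho_0})}\). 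Hence \(\exp(C(1+\mu^2))\le\exp(C(1+\rho_0^2\|V\|^2_{L^\infty(\mathcal A_{\rho_0})}))\).

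The second step converts the endpoint observation into the weighted boundary integral. The traces are \(w(\tau,0)=u(\rho_0^2e^{2\tau}/2)\) and \(w(\tau,\pi)=u(-\rho_0^2e^{2\tau}/2)\), using \(W(\cdot,0)=u\) from Proposition~\ref{prop:reduction}. With \(x=\pm\rho_0^2e^{2\tau}/2\) we have \(dx/|x|=2\,d\tau\), so
\[
\|Cw\|_{L^2(J_{d_*})}^2=\int_{J_{d_*}}\bigl(|w(\tau,0)|^2+|w(\tau,\pi)|^2\bigr)\,d\tau=\frac12\int_{\mathcal A_{\rho_0}}\frac{|u(x)|^2}{|x|}\,dx .
\]
Substituting these identities into \eqref{eq:full-interpolation} yields \eqref{eq:quarter-annular}; the factor \(2^{-\vartheta}\) is absorbed into \(C\).

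The main obstacle is justifying that the trace \(Cw\), defined abstractly on \(\mathcal V\) via Theorem~\ref{thm:trace}, coincides for a.e.\ \(\tau\) with the boundary values of \(u\) for a rough \(u\in L^\infty\cap H^{1/4}_{\rm loc}\). We would handle this through the Galerkin/energy realization of the appendix: approximate \(u\) by mollifications, for which the extension is continuous up to \(z=0\) and the identity is classical. We would then pass to the limit using the continuity of \(C\) on \(\mathcal V\) together with local \(L^2\) convergence of the traces. A second, milder check is that the weak equation \eqref{eq:weak-cylinder-local} survives the translation \(t\mapsto t_0+\tau\) with exactly this \(\Gamma\), since the operator \(w_{tt}+w_t-A_0w\) is translation-invariant in \(t\).
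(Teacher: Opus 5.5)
Your proposal is correct and follows essentially the same route as the paper. You apply Corollary~\ref{cor:full-interpolation} to the cylinder translated by \(t_0=\log\rho_0\), bound \(\mu\le C\rho_0\|V\|_{L^\infty(\mathcal A_{\rho_0})}\) from \eqref{eq:beta-exact} with the factor \(e^{1+d_*}\) absorbed, and convert \(\|Cw\|_{L^2(J_{d_*})}^2\) into \(\frac12\int_{\mathcal A_{\rho_0}}|u|^2|x|^{-1}\dd x\) via \(\dd\tau=\dd x/(2|x|)\). The paper simply asserts the endpoint identities \(w(\tau,0)=u(\rho_0^2e^{2\tau}/2)\) and \(w(\tau,\pi)=u(-\rho_0^2e^{2\tau}/2)\); your mollification argument identifying the abstract trace \(C\) with the boundary values of \(u\) is an extra justification the paper leaves implicit, and it works because \(x\)-convolution commutes with the extension and preserves the \(z\)-dependent Grushin weights.
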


\begin{proof}
Apply Corollary~\ref{cor:full-interpolation} to the translated cylinder
\(t=\log\rho_0+\tau\). From \eqref{eq:beta-exact},
\[
\mu
\le
C\rho_0\|V\|_{L^\infty(\mathcal A_{\rho_0})},
\]
where the fixed factor \(e^{1+d_*}\) is absorbed into \(C\). At the two
angular endpoints,
\[
w(\tau,0)=u\!\left(\frac{\rho_0^2e^{2\tau}}2\right),
\qquad
w(\tau,\pi)=u\!\left(-\frac{\rho_0^2e^{2\tau}}2\right).
\]
Since \(\dd\tau=\dd x/(2|x|)\),
\[
\|Cw\|_{L^2(J_{d_*})}^2
=
\frac12
\int_{\mathcal A_{\rho_0}}
\frac{|u(x)|^2}{|x|}\,\dd x.
\]
Substitution into \eqref{eq:full-interpolation} proves
\eqref{eq:quarter-annular}.
\end{proof}

\section{The endpoint boundary resolvent}
\label{sec:boundary-resolvent}

The half-density substitution removes the first logarithmic derivative and
reveals the arithmetic spectral structure needed for a critical Carleman
estimate. Put
\[
L_0=A_0+\frac14 I,
\qquad
\kappa_n=2n+\frac12,
\qquad
L_0\Phi_n=\kappa_n^2\Phi_n.
\]
If $w$ solves \eqref{eq:abstract-full-feedback} and
$r(t)=e^{t/2}w(t)$, then
\begin{equation}\label{eq:half-density-equation}
r_{tt}-L_0r=C^*\Gamma(t)Cr.
\end{equation}
Recall that
\[
v_n=C\Phi_n=p_n\binom{1}{(-1)^n},
\qquad
p_n^2\asymp (n+1)^{1/2}.
\]
The form scale has the spectral description
\[
\|f\|_{\mathcal V}^2\asymp
\sum_{n\ge0}\kappa_n^2|(f,\Phi_n)_{\HH}|^2,
\qquad
\|F\|_{\mathcal V^*}^2\asymp
\sum_{n\ge0}\kappa_n^{-2}
|\langle F,\Phi_n\rangle|^2.
\]
For \(z\notin\{\pm\kappa_n:n\ge0\}\), define the extrapolated resolvent
\(R(z):\mathcal V^*\to\mathcal V\) by
\begin{equation}\label{eq:extrapolated-resolvent}
R(z)F
=\sum_{n\ge0}
\frac{\langle F,\Phi_n\rangle_{\mathcal V^*,\mathcal V}}
{z^2-\kappa_n^2}\,\Phi_n.
\end{equation}
On \(\HH\) this agrees with the usual resolvent \((z^2-L_0)^{-1}\).
To verify the extrapolated mapping property, write
\(F_n=\langle F,\Phi_n\rangle_{\mathcal V^*,\mathcal V}\). Then
\[
\begin{aligned}
\|R(z)F\|_{\mathcal V}^2
&\asymp
\sum_{n\ge0}
\frac{\kappa_n^2|F_n|^2}{|z^2-\kappa_n^2|^2}\\
&\le
\left(\sup_{n\ge0}
\frac{\kappa_n^4}{|z^2-\kappa_n^2|^2}\right)
\sum_{n\ge0}\kappa_n^{-2}|F_n|^2
\le C_z\|F\|_{\mathcal V^*}^2.
\end{aligned}
\]
The supremum is finite for fixed \(z\) away from the poles and tends to one
as \(n\to\infty\). Coefficientwise multiplication shows that
\((z^2-L_0)R(z)=I\) on \(\mathcal V^*\) and
\(R(z)(z^2-L_0)=I\) on \(\mathcal V\). Since
\(C^*:\mathbb C^2\to\mathcal V^*\), it follows that
\begin{equation}\label{eq:resolvent-boundary-series}
R(z)C^*g
=\sum_{n\ge0}\frac{v_n^*g}{z^2-\kappa_n^2}\,\Phi_n
\quad\text{in }\mathcal V,
\end{equation}
and the bounded trace map defines
\[
K(z)g:=CR(z)C^*g
=\sum_{n\ge0}\frac{v_nv_n^*g}{z^2-\kappa_n^2}.
\]
The matrix series is absolutely convergent because, for large \(n\),
\[
\frac{|v_n|^2}{|z^2-\kappa_n^2|}
\le C_z\kappa_n^{-3/2},
\]
and \(\sum_n\kappa_n^{-3/2}<\infty\). Hence
\(R(z)C^*\in\mathcal L(\mathbb C^2,\mathcal V)\) and
\(K(z)\in\mathcal L(\mathbb C^2)\).

\begin{definition}[Admissible Carleman parameter]
Fix $0<\delta<1/4$. A number $\tau\ge2$ is called $\delta$-admissible if
\begin{equation}\label{eq:admissible-tau}
\operatorname{dist}\left(\tau,
 \left\{2n+\frac12:n\in\mathbb N_0\right\}\right)\ge\delta.
\end{equation}
Every interval of length $2$ contains a $\delta$-admissible number, after
fixing for instance $\delta=1/8$.
\end{definition}

\begin{lemma}[A discrete Cauchy-transform bound]
\label{lem:discrete-cauchy}
Let $h>0$, $a\in[0,h)$, and let
$\lambda_j=a+hj$. Suppose $\tau\ge4h$,
$\operatorname{dist}(\tau,a+h\mathbb Z)\ge\delta$, and
$z=\tau+i\xi$, $\xi\in\mathbb R$. Let $J$ consist of the indices for which
$\tau/2\le\lambda_j\le2\tau$. If a sequence $b_j$ satisfies
\[
|b_j|\le A\tau^{-1/2},
\qquad
|b_{j+1}-b_j|\le A\tau^{-3/2},
\]
then
\begin{equation}\label{eq:discrete-cauchy}
\left|\sum_{j\in J}\frac{b_j}{z-\lambda_j}\right|
\le C_{h,\delta}A\tau^{-1/2}.
\end{equation}
\end{lemma}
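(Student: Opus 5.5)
We bound the sum by summation by parts against the discrete Hilbert kernel, splitting off the terms with $\lambda_j$ near $\tau$. Write $j_0$ for the index with $\lambda_{j_0}$ closest to $\tau$; by admissibility, $|\tau-\lambda_{j_0}|\ge\delta$. Since $J$ contains $O(\tau/h)$ indices, the crude bound $\sum_{j\in J}A\tau^{-1/2}/|z-\lambda_j|$ loses a factor $\log\tau$. The point is to use the smoothness hypothesis $|b_{j+1}-b_j|\le A\tau^{-3/2}$ to cancel the logarithm.

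\textbf{Reduction to a constant sequence.} We would write $b_j=b_{j_0}+(b_j-b_{j_0})$. Summing the increments gives
\[
|b_j-b_{j_0}|\le A\tau^{-3/2}|j-j_0|.
\]
Since $|z-\lambda_j|\ge|\tau-\lambda_j|\ge c_{h,\delta}\bigl(|j-j_0|+1\bigr)h$ for $j\ne j_0$ (and $\ge\delta$ for $j=j_0$), the remainder is bounded by
\[
\sum_{j\in J}\frac{A\tau^{-3/2}|j-j_0|}{c\,h\,(|j-j_0|+1)}\le C_{h,\delta}A\tau^{-3/2}\,\#J\le C_{h,\delta}A\tau^{-1/2},
\]
using $\#J\le 2\tau/h+1$.

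\textbf{The constant part.} It remains to bound $|b_{j_0}|\,\bigl|\sum_{j\in J}(z-\lambda_j)^{-1}\bigr|$ by $C\tau^{-1/2}\cdot A$, so we need
\[
S(z):=\sum_{j\in J}\frac{1}{z-\lambda_j}=O_{h,\delta}(1)
\]
uniformly in $\xi$.

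\textbf{Bounding $S(z)$.} This is where the real work lies, and the plan is to pair the terms symmetrically about $j_0$. For $1\le k\le K:=\min(j_0-j_{\min},\,j_{\max}-j_0)$, set $\lambda_{j_0\pm k}=\lambda_{j_0}\pm hk$ and $w=z-\lambda_{j_0}$. Then
\[
\frac1{w-hk}+\frac1{w+hk}=\frac{2w}{w^2-h^2k^2}.
\]
Write $w=\epsilon+i\xi$ with $\delta\le|\epsilon|\le h/2$.

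When $|\xi|\lesssim h$: the summand is $O(h^{-2}k^{-2})$ for $k\ge1$ large, so the paired sum converges to $O_{h,\delta}(1)$. The central term contributes at most $1/\delta$.

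When $|\xi|$ is large: we use $|w^2-h^2k^2|\ge c\,(|\xi|^2+h^2k^2)$. To justify this, note $\operatorname{Re}(w^2)=\epsilon^2-\xi^2\le\epsilon^2$ and $\operatorname{Im}(w^2)=2\epsilon\xi$. The lower bound therefore follows once $hk\ge 2|\epsilon|$ or $|\xi|\ge h$, with the remaining small cases handled directly. It gives
\[
\sum_k\frac{|w|}{|\xi|^2+h^2k^2}\lesssim\frac{|w|}{h|\xi|}=O(1).
\]

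\textbf{Unpaired tail.} The remaining indices, on one side only, satisfy $|\lambda_j-\lambda_{j_0}|\ge c\tau$ or run to the edge of $J$. More precisely, they form a block of $O(\tau/h)$ points at distance between $\min(\tau-\tau/2,\,2\tau-\tau)-O(h)\gtrsim\tau$ and $\lesssim\tau$ from $\operatorname{Re}z$. Their sum is therefore bounded by $(\tau/h)\cdot C/\tau=O(1/h)$. Here the condition $\tau\ge4h$ ensures $J$ is nonempty and these distances are comparable to $\tau$.

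\textbf{Conclusion.} Combining, $|S(z)|\le C_{h,\delta}$, and so the constant part is at most $A\tau^{-1/2}C_{h,\delta}$. Adding the remainder bound gives \eqref{eq:discrete-cauchy}.

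The main obstacle is the uniform-in-$\xi$ control of the paired sum when $|\xi|$ is comparable to the lattice spacing. I would handle this directly with the explicit bound on $|w^2-h^2k^2|$, or alternatively compare the sum with the closed form $\pi\cot(\pi w/h)/h$ truncated at $K$, which is bounded away from the poles uniformly in $\xi$.
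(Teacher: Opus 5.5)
Your proposal is correct and follows essentially the same route as the paper: subtract the constant $b_{j_0}$, bound the remainder termwise by $A\tau^{-3/2}|k|/(c_h(1+|k|))$ over $O(\tau/h)$ indices, and bound the lattice sum $\sum_{j\in J}(z-\lambda_j)^{-1}$ uniformly in $\xi$ by pairing $k$ with $-k$ about $j_0$ and treating the one-sided unpaired tail separately. The differences are cosmetic. You pair the complex terms directly via $|w^2-h^2k^2|\ge\frac34(\xi^2+h^2k^2)$, which holds for every $k\ge1$ because $|\epsilon|\le h/2$, so your case split in $\xi$ is unnecessary; the paper instead splits into real and imaginary parts. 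Your termwise $O(1/\tau)$ bound on the tail is also simpler than the paper's integral comparison.
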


\begin{proof}
Choose $j_0$ so that $d=\tau-\lambda_{j_0}$ satisfies
$|d|\le h/2$. Admissibility gives $|d|\ge\delta$. If
$j=j_0+k$, the condition $\tau/2\le\lambda_j\le2\tau$ implies
\[
-c_1\tau/h\le k\le c_2\tau/h
\]
with positive constants $c_1,c_2$ depending only on $h$; in particular,
the positive and negative ranges have comparable lengths. We write
\begin{equation}\label{eq:cauchy-decomposition}
\sum_{j\in J}\frac{b_j}{z-\lambda_j}
=b_{j_0}\sum_{k=-J_-}^{J_+}\frac1{d+i\xi-hk}
+\sum_{k=-J_-}^{J_+}
 \frac{b_{j_0+k}-b_{j_0}}{d+i\xi-hk}.
\end{equation}

We first prove the uniform lattice estimate
\begin{equation}\label{eq:lattice-cauchy-uniform}
\left|\sum_{k=-J_-}^{J_+}\frac1{d+i\xi-hk}\right|
\le C_{h,\delta}.
\end{equation}
For the imaginary part, put \(\eta=|\xi|\). The term with \(k=0\) is
bounded by \((2|d|)^{-1}\le(2\delta)^{-1}\). For \(k\ne0\),
\(|d-hk|\ge h(|k|-1/2)\), and therefore
\[
\eta\sum_{k\ne0}\frac1{(d-hk)^2+\eta^2}
\le
2\eta\sum_{m=1}^{\infty}
\frac1{h^2(m-1/2)^2+\eta^2}
\le \frac{C}{h},
\]
where the last inequality follows by comparison with the corresponding
integral. When \(\xi=0\), the imaginary part vanishes. For the real part set
$f_\xi(x)=x/(x^2+\xi^2)$. Pairing $k$ with $-k$ and applying the mean value
theorem yields, for $k\ge1$,
\[
|f_\xi(d-hk)+f_\xi(d+hk)|
\le \frac{C_h|d|}{(hk)^2+\xi^2}.
\]
The paired series is summable uniformly in $\xi$. The unpaired indices form
at most one tail whose endpoints are comparable multiples of $\tau/h$.
If $|\xi|\le4\tau$, comparison with
$\int_A^B x/(x^2+\xi^2)\,dx$ gives the logarithm of a fixed ratio. If
$|\xi|>4\tau$, every term is $O(\tau/\xi^2)$ and there are $O(\tau)$
terms. This proves \eqref{eq:lattice-cauchy-uniform}.

For the second sum in \eqref{eq:cauchy-decomposition},
\[
|b_{j_0+k}-b_{j_0}|
\le A\tau^{-3/2}|k|.
\]
Moreover, for $k\ne0$,
$|d+i\xi-hk|\ge c_h(1+|k|)$, after adjusting the constant for the
finitely many $|k|\le2$. Hence each nonzero remainder term is bounded by
$CA\tau^{-3/2}$, and the $k=0$ term vanishes. Since the number of indices
is $O(\tau)$, the remainder is $O(A\tau^{-1/2})$. The first term is
bounded by \eqref{eq:lattice-cauchy-uniform} and
$|b_{j_0}|\le A\tau^{-1/2}$. This proves
\eqref{eq:discrete-cauchy}.
\end{proof}

\begin{lemma}[Critical endpoint resolvent bounds]
\label{lem:endpoint-resolvent}
Fix $\delta\in(0,1/4)$. There exists $C_\delta>0$ such that for every
$\delta$-admissible $\tau\ge2$, every $\xi\in\mathbb R$, and
$z=\tau+i\xi$, one has
\begin{align}
\|R(z)\|_{\HH\to\HH}&\le C_\delta\tau^{-1},
\label{eq:R-HH}\\
\|CR(z)\|_{\HH\to\mathbb C^2}
 +\|R(z)C^*\|_{\mathbb C^2\to\HH}
&\le C_\delta\tau^{-3/4},
\label{eq:CR}\\
\|K(z)\|_{\mathbb C^2\to\mathbb C^2}
&\le C_\delta\tau^{-1/2},
\label{eq:K-critical}\\
\|zR(z)\|_{\HH\to\HH}
 +\|L_0^{1/2}R(z)\|_{\HH\to\HH}
&\le C_\delta,
\label{eq:bulk-resolvent-first}\\
\|zR(z)C^*\|+\|L_0^{1/2}R(z)C^*\|
&\le C_\delta\tau^{1/4}.
\label{eq:boundary-resolvent-first}
\end{align}
\end{lemma}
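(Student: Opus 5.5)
Everything is diagonal in the basis \(\{\Phi_n\}\), so I will reduce each bound to a scalar estimate on the symbol \(z^2-\kappa_n^2=(z-\kappa_n)(z+\kappa_n)\) with \(z=\tau+i\xi\). The first step is a pointwise lower bound. Since \(\operatorname{Re}z=\tau\ge2>0\), we have \(|z+\kappa_n|\ge\tau+\kappa_n\). Admissibility and \(|z-\kappa_n|\ge|\tau-\kappa_n|\) give \(|z-\kappa_n|\ge\max(\delta,|\tau-\kappa_n|,|\xi|)\). Hence
\[
|z^2-\kappa_n^2|\ge c_\delta(\tau+\kappa_n)\bigl(1+|\tau-\kappa_n|+|\xi|\bigr).
\]
The bulk bounds then follow by taking suprema over \(n\). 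For \eqref{eq:R-HH}, \(\sup_n|z^2-\kappa_n^2|^{-1}\le C_\delta\tau^{-1}\). For \eqref{eq:bulk-resolvent-first}, I bound \(\kappa_n/|z^2-\kappa_n^2|\le C_\delta\) and \(|z|/|z^2-\kappa_n^2|\le(\tau+|\xi|)/(c_\delta(\tau+\kappa_n)(1+|\xi|))\le C_\delta\).

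Next come the trace bounds \eqref{eq:CR} and \eqref{eq:boundary-resolvent-first}. Since \(CR(z)f=\sum_n v_n(f,\Phi_n)/(z^2-\kappa_n^2)\), Cauchy--Schwarz gives \(\|CR(z)\|^2\le\sum_n|v_n|^2|z^2-\kappa_n^2|^{-2}\), and \(R(z)C^*\) is its adjoint up to conjugation. Using \(|v_n|^2\asymp\kappa_n^{1/2}\), I split the sum into three ranges. On \(\kappa_n\in[\tau/2,2\tau]\) the sum is at most \(C\tau^{1/2}\tau^{-2}\sum_k(1+|k|)^{-2}\le C\tau^{-3/2}\). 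On \(\kappa_n<\tau/2\) it is at most \(\tau^{-4}\sum_{\kappa_n<\tau}\kappa_n^{1/2}\le C\tau^{-5/2}\). On \(\kappa_n>2\tau\) it is at most \(\sum\kappa_n^{-7/2}\le C\tau^{-5/2}\). Together these give \(\tau^{-3/4}\). For \eqref{eq:boundary-resolvent-first} I square the multipliers \(|z|\) or \(\kappa_n\) in the same sum. The middle range then contributes \(\tau^{1/2}\) from the terms with \(|\tau-\kappa_n|\lesssim1\), while the terms with \(|\xi|\) large are controlled by the \((1+|\xi|)^{-2}\) factor, which kills \(|z|^2\). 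The outer ranges contribute \(\sum_{\kappa_n\le\tau/2}\kappa_n^{1/2}\tau^{-2}\lesssim\tau^{-1/2}\) and \(\sum_{\kappa_n>2\tau}\kappa_n^{-3/2}\lesssim\tau^{-1/2}\). Altogether the square is \(O(\tau^{1/2})\), which gives \(\tau^{1/4}\).

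The main obstacle is \eqref{eq:K-critical}. Absolute summation fails there: the resonant window contributes \(\sum\kappa_n^{1/2}/(\tau|z-\kappa_n|)\sim\tau^{-1/2}\log\tau\), and avoiding that logarithm needs cancellation. I first split by parity, since \(v_nv_n^*=p_n^2\bigl(\begin{smallmatrix}1&(-1)^n\\(-1)^n&1\end{smallmatrix}\bigr)\). It therefore suffices to bound the scalar sums \(S_\sigma=\sum_{n\equiv\sigma}p_n^2/(z^2-\kappa_n^2)\) over the lattices \(\kappa_n=\sigma\cdot2+\tfrac12+4j\), that is, with \(h=4\). The nonresonant parts \(\kappa_n\notin[\tau/2,2\tau]\) are bounded absolutely by the estimates above: \(\tau^{-2}\sum_{\kappa_n\le\tau}\kappa_n^{1/2}+\sum_{\kappa_n>2\tau}\kappa_n^{-3/2}\lesssim\tau^{-1/2}\). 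In the resonant window I use the partial fraction \(1/(z^2-\kappa_n^2)=(2z)^{-1}\bigl[(z-\kappa_n)^{-1}+(z+\kappa_n)^{-1}\bigr]\). The \((z+\kappa_n)^{-1}\) piece is nonresonant and bounded absolutely by \(|z|^{-1}\tau^{-1}\sum_{\text{window}}\kappa_n^{1/2}\lesssim\tau^{-1/2}\). For the resonant piece I write it as \((2z)^{-1}\sum b_j/(z-\lambda_j)\) with \(b_j=p_n^2\tau^{-1}\cdot\tau\) and apply Lemma~\ref{lem:discrete-cauchy} to \(b_j=p_n^2/\tau\). The hypotheses hold: \(|b_j|\lesssim\tau^{-1/2}\), and by the exact formula \eqref{eq:endpoint-exact} and Gamma-ratio asymptotics \(|p_{n+2}^2-p_n^2|\lesssim n^{-1/2}\), so \(|b_{j+1}-b_j|\lesssim\tau^{-3/2}\). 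The lemma then gives \(\tau^{-1/2}\). The prefactor \(\tau/|z|\le1\) completes the bound, and the condition \(\tau\ge4h=16\) can be arranged by treating the finitely many small admissible \(\tau\) through the crude bound with a constant.
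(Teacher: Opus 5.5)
Your proposal is correct and takes essentially the same route as the paper. You use the factorization lower bound $|z^2-\kappa_n^2|\gtrsim_\delta(\tau+\kappa_n)(1+|\tau-\kappa_n|+|\xi|)$, the same three-range splitting of the trace sums, and, for \eqref{eq:K-critical}, the reduction to the two parity lattices of spacing $4$ with the resonant window handled by Lemma~\ref{lem:discrete-cauchy}. Your partial fraction with $z$-independent weights $b_j=p_n^2/\tau$ (instead of the paper's $b_n(z)=p_n^2/(z+\kappa_n)$), and your single unified bound in place of the paper's $|\xi|\le4\tau$ versus $|\xi|>4\tau$ case split, are harmless variants. One wording fix: the admissible $\tau\in[2,16)$ form a bounded range, not finitely many values. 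This does not matter, because the crude absolute bound $\sum_n\kappa_n^{1/2}/\bigl((\tau+\kappa_n)(1+|\tau-\kappa_n|)\bigr)\le C_\delta$ is uniform on that range, exactly as in the paper.
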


\begin{proof}
Throughout the proof constants may depend on $\delta$. We first dispose of
the compact parameter range omitted by the lattice lemma. Suppose
\(2\le\tau<16\). The finitely many modes with \(\kappa_n\le32\) are
uniformly separated from the poles by admissibility. For \(\kappa_n>32\),
one has \(|\tau-\kappa_n|\ge\kappa_n/2\). Hence, uniformly in \(\xi\),
\begin{align*}
\sum_{n\ge0}
\frac{\kappa_n^{1/2}}
{(\tau+\kappa_n)\sqrt{(\tau-\kappa_n)^2+\xi^2}}
&\le C_\delta+C\sum_{\kappa_n>32}\kappa_n^{-3/2}
\le C_\delta,\\
\sum_{n\ge0}
\frac{\kappa_n^{1/2}}
{|z^2-\kappa_n^2|^2}
&\le C_\delta+C\sum_{\kappa_n>32}\kappa_n^{-7/2}
\le C_\delta,\\
\sum_{n\ge0}
\frac{(|z|^2+\kappa_n^2)\kappa_n^{1/2}}
{|z^2-\kappa_n^2|^2}
&\le
\sum_{n\ge0}
\frac{\kappa_n^{1/2}}
{(\tau-\kappa_n)^2+\xi^2}
\le C_\delta.
\end{align*}
The first series bounds \(K(z)\), the second bounds \(CR(z)\), and the
third bounds the two first-order boundary multipliers. The scalar resolvent
and first-order bulk multipliers are treated by the same finite-mode/tail
split; on the tail their multipliers are \(O(\kappa_n^{-2})\) and
\(O(\kappa_n^{-1})\), respectively. Thus all five estimates hold uniformly
for \(2\le\tau<16\). Since \(\tau\) is then comparable to one, these uniform
bounds have exactly the displayed powers of \(\tau\). We may therefore
assume \(\tau\ge16\) when Lemma~\ref{lem:discrete-cauchy} is invoked on the
parity lattices of spacing \(h=4\).

The factorization
\begin{equation}\label{eq:resolvent-factorization}
|z^2-\kappa_n^2|
=|z-\kappa_n|\,|z+\kappa_n|
\ge (\tau+\kappa_n)
\sqrt{(\tau-\kappa_n)^2+\xi^2}
\end{equation}
and admissibility imply
\[
\sup_n\frac1{|z^2-\kappa_n^2|}\le C\tau^{-1},
\quad
\sup_n\frac{|z|+\kappa_n}{|z^2-\kappa_n^2|}\le C.
\]
This proves \eqref{eq:R-HH} and
\eqref{eq:bulk-resolvent-first}.

Since $C\Phi_n=v_n$ and $|v_n|^2\le C\kappa_n^{1/2}$,
\begin{equation}\label{eq:CR-spectral-sum}
\|CR(z)\|^2
\le C\sum_{n\ge0}
\frac{\kappa_n^{1/2}}{|z^2-\kappa_n^2|^2}.
\end{equation}
On $\kappa_n\le\tau/2$, the sum is
$O(\tau^{-4}\sum_{\kappa_n\le\tau/2}\kappa_n^{1/2})
=O(\tau^{-5/2})$. On $\kappa_n\ge2\tau$, it is bounded by
$C\sum_{\kappa_n\ge2\tau}\kappa_n^{-7/2}=O(\tau^{-5/2})$.
In the middle range, \eqref{eq:resolvent-factorization} gives
\[
\frac{\kappa_n^{1/2}}{|z^2-\kappa_n^2|^2}
\le C\tau^{-3/2}
\frac1{(\tau-\kappa_n)^2+\xi^2}.
\]
The last lattice sum is uniformly bounded because
$\operatorname{dist}(\tau,\{\kappa_n\})\ge\delta$. Thus
\eqref{eq:CR-spectral-sum} is $O(\tau^{-3/2})$, proving
\eqref{eq:CR}; the adjoint estimate is identical.

We next justify \eqref{eq:boundary-resolvent-first}. Its squared left-hand
operator norms are controlled by
\[
C\sum_{n\ge0}
\frac{(|z|^2+\kappa_n^2)\kappa_n^{1/2}}
 {|z^2-\kappa_n^2|^2}.
\]
If $|\xi|\le4\tau$, the part $\kappa_n\le4\tau$ is bounded by
$C\tau^2$ times \eqref{eq:CR-spectral-sum}, hence by $C\tau^{1/2}$;
the tail $\kappa_n>4\tau$ is bounded by
$C\sum_{\kappa_n>4\tau}\kappa_n^{-3/2}=C\tau^{-1/2}$.
If $|\xi|>4\tau$, then $|z\pm\kappa_n|\ge|\xi|$. For
$\kappa_n\le2|\xi|$ the sum is at most
$C|\xi|^{-4}\sum_{\kappa_n\le2|\xi|}
(|\xi|^2+\kappa_n^2)\kappa_n^{1/2}=C|\xi|^{-1/2}$,
while for $\kappa_n>2|\xi|$ it is bounded by
$C\sum_{\kappa_n>2|\xi|}
(|\xi|^2+\kappa_n^2)\kappa_n^{-7/2}=C|\xi|^{-1/2}$.
Taking square roots proves \eqref{eq:boundary-resolvent-first}.

It remains to prove \eqref{eq:K-critical}. In the orthonormal basis
$2^{-1/2}(1,1)$, $2^{-1/2}(1,-1)$, the matrix $K(z)$ is diagonal, with
entries
\[
2\sum_{n\equiv\varepsilon\ ({\rm mod}\ 2)}
\frac{p_n^2}{z^2-\kappa_n^2},
\qquad \varepsilon=0,1.
\]
The parity lattices are
$\kappa_{2j}=4j+\frac12$ and
$\kappa_{2j+1}=4j+\frac52$. The ranges
$\kappa_n\le\tau/2$ and $\kappa_n\ge2\tau$ are bounded absolutely by
$C\tau^{-1/2}$. In the middle range write
\[
\frac{p_n^2}{z^2-\kappa_n^2}
=\frac{b_n(z)}{z-\kappa_n},
\qquad b_n(z)=\frac{p_n^2}{z+\kappa_n}.
\]
By Proposition~\ref{prop:endpoint-growth},
\[
p_n^2=c_*\left(n+\frac14\right)
\frac{\Gamma(n+\frac12)}{\Gamma(n+1)}.
\]
The exact ratio
\[
\frac{p_{n+2}^2}{p_n^2}
=\frac{n+\frac94}{n+\frac14}
 \frac{(n+\frac12)(n+\frac32)}{(n+1)(n+2)}
=1+O((n+1)^{-1})
\]
therefore gives
$|p_{n+2}^2-p_n^2|\le C(n+1)^{-1/2}$. Consequently, on
$\tau/2\le\kappa_n\le2\tau$,
\[
|b_n(z)|\le C\tau^{-1/2},
\qquad
|b_{n+2}(z)-b_n(z)|\le C\tau^{-3/2}.
\]
Lemma~\ref{lem:discrete-cauchy}, applied separately to the two parity
lattices, gives a $C\tau^{-1/2}$ bound for both diagonal entries. This is
\eqref{eq:K-critical}.
\end{proof}

\begin{lemma}[Fourier--resolvent representation in the energy class]
\label{lem:fourier-resolvent-representation}
Let $\tau$ be $\delta$-admissible. Suppose
\[
v\in L^2(\mathbb R;\mathcal V)\cap H^1(\mathbb R;\HH),
\qquad g\in L^2(\mathbb R;\mathbb C^2),
\qquad F\in L^2(\mathbb R;\HH),
\]
and
\begin{equation}\label{eq:abstract-resolvent-equation}
((\partial_t+\tau)^2-L_0)v=C^*g+F
\quad\text{in }\mathcal D'(\mathbb R;\mathcal V^*).
\end{equation}
Then, for almost every $\xi\in\mathbb R$, with $z=\tau+i\xi$,
\begin{align}
\widehat v(\xi)
&=R(z)C^*\widehat g(\xi)+R(z)\widehat F(\xi)
\quad\text{in }\HH,
\label{eq:fourier-resolvent-v}\\
\widehat{Cv}(\xi)
&=K(z)\widehat g(\xi)+CR(z)\widehat F(\xi)
\quad\text{in }\mathbb C^2.
\label{eq:fourier-resolvent-trace}
\end{align}
Moreover, $L_0^{1/2}\widehat v$ and
$z\widehat v$ are obtained from \eqref{eq:fourier-resolvent-v} by applying
the corresponding resolvent multipliers. All identities hold in the
associated $L^2_\xi$ spaces.
\end{lemma}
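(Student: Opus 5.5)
The approach is to reduce the vector-valued identity to scalar mode equations by testing against the eigenfunctions, and then apply the Fourier transform in \(t\) mode by mode.

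\textbf{Step 1: mode equations.} Fix \(n\) and test \eqref{eq:abstract-resolvent-equation} against \(\Phi_n\phi(t)\) with \(\phi\in C_c^\infty(\mathbb R)\). This is legitimate because \(\Phi_n\in\mathcal V\) and \(\langle C^*g,\Phi_n\rangle=v_n^*g\). Set
\[
v_n(t):=(v(t),\Phi_n)_{\HH},\qquad g_n:=v_n^*g,\qquad F_n:=(F,\Phi_n)_{\HH}.
\]
The coefficient \(v_n(t)\) lies in \(H^1(\mathbb R)\) and satisfies the scalar equation
\[
(\partial_t+\tau)^2v_n-\kappa_n^2v_n=g_n+F_n
\]
in \(\mathcal D'(\mathbb R)\). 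The operator \(\partial_t^2\) is applied distributionally, and the right-hand side is in \(L^2(\mathbb R)\). Since \(v_n\in H^1\), the function \(\widehat{v_n}\) is in \(L^2\) with \(\xi\widehat{v_n}\in L^2\). Fourier transforming gives, for a.e. \(\xi\),
\[
(z^2-\kappa_n^2)\widehat{v_n}(\xi)=\widehat{g_n}(\xi)+\widehat{F_n}(\xi),
\]
using the convention \(\partial_t\mapsto i\xi\). The multiplier \(z^2-\kappa_n^2\) never vanishes: since \(\tau\ge2\) and \(\tau\) is admissible, \(|z-\kappa_n|\ge\delta\). Dividing therefore yields \(\widehat{v_n}=(\widehat{g_n}+\widehat{F_n})/(z^2-\kappa_n^2)\) a.e. The exceptional null set depends on \(n\), but a countable union of null sets is still null.

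\textbf{Step 2: reassembly.} By Plancherel in the Hilbert space \(\HH\), we have \(\widehat v\in L^2(\mathbb R;\HH)\) with coefficients \((\widehat v(\xi),\Phi_n)=\widehat{v_n}(\xi)\) for a.e. \(\xi\). The same holds for \(\widehat F\), and \(\widehat g\) is \(\mathbb C^2\)-valued with \(\widehat{g_n}=v_n^*\widehat g\). Comparing coefficients with \eqref{eq:resolvent-boundary-series} and \eqref{eq:extrapolated-resolvent} proves \eqref{eq:fourier-resolvent-v}. The bounds \eqref{eq:R-HH} and \eqref{eq:CR} from Lemma~\ref{lem:endpoint-resolvent} show that both terms on the right are in \(L^2_\xi(\HH)\), uniformly in \(\xi\). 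The multiplier statements for \(L_0^{1/2}\widehat v\) and \(z\widehat v\) follow coefficientwise in the same way. Here \(v\in L^2(\mathcal V)\) gives \(\kappa_n\widehat{v_n}\in\ell^2L^2\), and \(v\in H^1(\HH)\) together with \(\tau\) bounded gives \(z\widehat v\in L^2(\HH)\). The boundedness in \eqref{eq:bulk-resolvent-first} and \eqref{eq:boundary-resolvent-first} identifies these with \(L_0^{1/2}R(z)(\cdots)\) and \(zR(z)(\cdots)\) in \(L^2_\xi\).

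\textbf{Step 3: the trace identity.} This is the main obstacle: \(C\) is defined only on \(\mathcal V\), so it must be commuted with the Fourier transform. First, \(C:\mathcal V\to\mathbb C^2\) is bounded by Theorem~\ref{thm:trace}, and \(v\in L^2(\mathbb R;\mathcal V)\), so \(Cv\in L^2(\mathbb R;\mathbb C^2)\). Second, Bochner integrals commute with bounded operators. Hence \(\widehat{Cv}=C\widehat v\), where \(\widehat v\) is taken as the \(\mathcal V\)-valued Fourier transform, which agrees a.e. with the \(\HH\)-valued one. It remains to know that \(\widehat v(\xi)\in\mathcal V\) and that \(C\) may be applied to each side of \eqref{eq:fourier-resolvent-v}. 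Both right-hand terms lie in \(\mathcal V\), because \(R(z)C^*\in\mathcal L(\mathbb C^2,\mathcal V)\) and \(R(z):\HH\to\mathcal V\) by the extrapolated mapping property. Applying \(C\) then gives \(CR(z)C^*\widehat g+CR(z)\widehat F=K(z)\widehat g+CR(z)\widehat F\). To make the Bochner argument rigorous, I would approximate \(v\) by Schwartz-class truncations \(P_{\le M}v\) convolved with mollifiers, or simply use the Plancherel isometry on \(L^2(\mathbb R;\mathcal V)\). Each of these commutes with \(C\) continuously, and we pass to the limit in \(L^2_\xi\).
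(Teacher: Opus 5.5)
Your proposal is correct and is essentially the paper's argument. The paper Fourier-transforms the $\mathcal V^*$-valued equation and inverts $z^2-L_0:\mathcal V\to\mathcal V^*$ with the extrapolated resolvent, which is defined by exactly your coefficientwise division, and it obtains the trace identity as in your Step 3, from Plancherel on $L^2(\mathbb R;\mathcal V)$ and the boundedness of $C$. One small point: you use $v_n$ both for the coefficient $(v(t),\Phi_n)_{\mathcal H}$ and for the vector $C\Phi_n$, so rename one of them.
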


\begin{proof}
Take the Fourier transform in the logarithmic variable. On every bounded
frequency interval, \(\xi^2\widehat v\in L^2_{\rm loc}(\mathcal V^*)\),
while \(L_0\widehat v\in L^2(\mathcal V^*)\). Hence
\eqref{eq:abstract-resolvent-equation}, initially an identity in tempered
\(\mathcal V^*\)-valued distributions, becomes for almost every \(\xi\)
\[
\bigl(z^2-L_0\bigr)\widehat v(\xi)
=C^*\widehat g(\xi)+\widehat F(\xi),
\qquad z=\tau+i\xi,
\]
in \(\mathcal V^*\). Admissibility excludes the poles
\(z=\pm\kappa_n\), and the extrapolated resolvent
\eqref{eq:extrapolated-resolvent} is the inverse of
\(z^2-L_0:\mathcal V\to\mathcal V^*\). Applying this inverse gives
\eqref{eq:fourier-resolvent-v} directly in \(\mathcal V\), and therefore
also in \(\HH\).

The multiplier estimates in Lemma~\ref{lem:endpoint-resolvent} show that
both terms in \eqref{eq:fourier-resolvent-v} belong to the asserted
\(L^2_\xi\) spaces. Since the Fourier transform is unitary on
\(L^2(\mathbb R;\mathcal V)\) and
\(C:\mathcal V\to\mathbb C^2\) is bounded, one has
\(C\widehat v=\widehat{Cv}\) almost everywhere. Applying \(C\) to the full
resolvent identity gives \eqref{eq:fourier-resolvent-trace}; no estimate for
a truncated boundary matrix is used. The assertions for
\(z\widehat v\) and \(L_0^{1/2}\widehat v\) follow by applying the final two
multiplier bounds in Lemma~\ref{lem:endpoint-resolvent}.
\end{proof}

\section{A critical rough-Robin Carleman estimate}
\label{sec:critical-carleman}

\begin{theorem}[Linear-weight endpoint Carleman estimate]
\label{thm:critical-carleman}
Fix $\delta\in(0,1/4)$. There is a constant $C>0$ such that the following
holds. Let $\Gamma\in L^\infty(\mathbb R;\mathbb R^{2\times2})$ be symmetric
and $\|\Gamma\|_\infty\le\mu$. Let $v$ be compactly supported in $t$ and
belong to the finite-energy class. Assume that, in
$L^2(\mathbb R;\mathcal V^*)$,
\begin{equation}\label{eq:carleman-forced-equation}
\bigl((\partial_t+\tau)^2-L_0\bigr)v
=C^*\Gamma(t)Cv+F,
\qquad F\in L^2(\mathbb R;\HH).
\end{equation}
If $\tau$ is $\delta$-admissible and
\begin{equation}\label{eq:tau-absorption}
\tau\ge C(1+\mu^2),
\end{equation}
then
\begin{equation}\label{eq:critical-carleman}
\tau\|v\|_{L^2_t\HH}
+\|v_t\|_{L^2_t\HH}
+\|L_0^{1/2}v\|_{L^2_t\HH}
+\tau^{3/4}\|Cv\|_{L^2_t}
\le C\|F\|_{L^2_t\HH}.
\end{equation}
The equation is understood in \(L^2(\mathbb R;\mathcal V^*)\). The
Fourier--resolvent identities used below involve the full resolvent
\(R(z):\mathcal V^*\to\mathcal V\) and the full boundary matrix
\(K(z)=CR(z)C^*\).
\end{theorem}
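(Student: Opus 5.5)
The plan is to treat the Robin term as a boundary input and apply the Fourier--resolvent machinery of Lemma~\ref{lem:fourier-resolvent-representation} with
\[
g=\Gamma Cv\in L^2(\mathbb R;\mathbb C^2).
\]
This $g$ is admissible because $Cv\in L^2_t$: the function $v$ is compactly supported in $t$ and lies in $L^2(\mathbb R;\mathcal V)$, and $C:\mathcal V\to\mathbb C^2$ is bounded. For almost every $\xi$, with $z=\tau+i\xi$, the lemma gives
\[
\widehat v(\xi)=R(z)C^*\widehat g(\xi)+R(z)\widehat F(\xi),
\qquad
\widehat{Cv}(\xi)=K(z)\widehat g(\xi)+CR(z)\widehat F(\xi).
\]

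\emph{Step 1: boundary trace.} Using \eqref{eq:K-critical}, \eqref{eq:CR} and Plancherel,
\[
\|Cv\|_{L^2_t}\le C_\delta\tau^{-1/2}\|\Gamma Cv\|_{L^2_t}+C_\delta\tau^{-3/4}\|F\|_{L^2_t\HH}
\le C_\delta\mu\tau^{-1/2}\|Cv\|_{L^2_t}+C_\delta\tau^{-3/4}\|F\|_{L^2_t\HH}.
\]
If the constant in \eqref{eq:tau-absorption} is large, then $C_\delta\mu\tau^{-1/2}\le1/2$. The first term on the right can therefore be absorbed, which gives $\tau^{3/4}\|Cv\|_{L^2_t}\le C\|F\|_{L^2_t\HH}$. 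Absorption is legitimate because $\|Cv\|_{L^2_t}<\infty$ a priori. This is exactly the point where the critical $\tau^{-1/2}$ bound with no logarithmic loss is used: the condition $\tau\gtrsim\mu^2$ is precisely $\mu\tau^{-1/2}\ll1$.

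\emph{Step 2: bulk terms.} Insert the trace bound from Step 1 into the representation of $\widehat v$.
\begin{itemize}
\item Estimate \eqref{eq:R-HH} gives $\|R(z)\widehat F\|_{\HH}\le C\tau^{-1}|\widehat F|$.
\item Estimate \eqref{eq:CR}, used in its adjoint form, gives
\[
\|R(z)C^*\widehat g\|_{\HH}\le C\tau^{-3/4}\mu\,|\widehat{Cv}|.
\]
Hence
\[
\tau\|v\|_{L^2_t\HH}\le C\|F\|_{L^2_t\HH}+C\mu\tau^{1/4}\|Cv\|_{L^2_t}\le C\bigl(1+\mu\tau^{-1/2}\bigr)\|F\|_{L^2_t\HH}.
\]
\item For the first-order terms, $\widehat{(\partial_t+\tau)v}=z\widehat v$, and the multiplier $L_0^{1/2}$ is applied to $\widehat v$. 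Estimates \eqref{eq:bulk-resolvent-first} and \eqref{eq:boundary-resolvent-first} give
\[
\|z\widehat v\|+\|L_0^{1/2}\widehat v\|\le C|\widehat F|+C\tau^{1/4}\mu\,|\widehat{Cv}|,
\]
and again the right-hand side is $\lesssim\|F\|$ after Step 1.
\end{itemize}
Finally, $\|v_t\|\le\|(\partial_t+\tau)v\|+\tau\|v\|$, so the $v_t$ term is also controlled. Summing these bounds yields \eqref{eq:critical-carleman}.

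\emph{Main obstacle.} The delicate step is the justification of the Fourier representation in the rough setting: the equation holds only in $L^2(\mathbb R;\mathcal V^*)$, and $\Gamma$ is merely measurable. Both points are handled by Lemma~\ref{lem:fourier-resolvent-representation}, applied with the full extrapolated resolvent and the full matrix $K(z)$, so that no truncated boundary matrix appears. I would only check that $g=\Gamma Cv$ meets its hypotheses. Symmetry of $\Gamma$ is not needed beyond boundedness. The only quantitative input is the uniform-in-$\xi$ bound \eqref{eq:K-critical}, and this bound is what allows the absorption threshold $\tau\ge C(1+\mu^2)$.
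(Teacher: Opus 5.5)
Your proposal is correct and follows the paper's proof essentially step for step. You set $g=\Gamma Cv$, absorb the Robin term in the trace identity via \eqref{eq:K-critical} and \eqref{eq:CR} once $\mu\tau^{-1/2}\ll1$, feed the resulting trace bound into the bulk identity with \eqref{eq:R-HH}, \eqref{eq:bulk-resolvent-first} and \eqref{eq:boundary-resolvent-first}, and recover $v_t=(\partial_t+\tau)v-\tau v$. One small slip: since $\Gamma$ depends on $t$, $\widehat g(\xi)$ is not pointwise bounded by $\mu|\widehat{Cv}(\xi)|$, so your Step 2 bounds should carry $|\widehat g(\xi)|$ and invoke $\|\Gamma Cv\|_{L^2_t}\le\mu\|Cv\|_{L^2_t}$ only after Plancherel, as you already do correctly in Step 1.
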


\begin{proof}
Set
\[
g(t)=\Gamma(t)Cv(t).
\]
Since $C:\mathcal V\to\mathbb C^2$ is bounded and
$\Gamma\in L^\infty$, one has $g\in L^2(\mathbb R;\mathbb C^2)$.
Lemma~\ref{lem:fourier-resolvent-representation} applies to
\eqref{eq:carleman-forced-equation}. Its trace identity, together with
\eqref{eq:K-critical}, \eqref{eq:CR}, and Plancherel, gives
\[
\|Cv\|_{L^2}
\le C\mu\tau^{-1/2}\|Cv\|_{L^2}
 +C\tau^{-3/4}\|F\|_{L^2}.
\]
After enlarging the constant in \eqref{eq:tau-absorption}, the first term is
absorbed, and therefore
\begin{equation}\label{eq:trace-carleman-part}
\tau^{3/4}\|Cv\|_{L^2}\le C\|F\|_{L^2}.
\end{equation}

The bulk identity \eqref{eq:fourier-resolvent-v},
\eqref{eq:R-HH}, and \eqref{eq:CR} imply
\[
\|v\|_{L^2}
\le C\mu\tau^{-3/4}\|Cv\|_{L^2}
 +C\tau^{-1}\|F\|_{L^2}
\le C\tau^{-1}\|F\|_{L^2},
\]
where the last step uses \eqref{eq:trace-carleman-part} and
$\mu\tau^{-1/2}\le C$. Applying instead the multiplier bounds
\eqref{eq:bulk-resolvent-first} and
\eqref{eq:boundary-resolvent-first} gives
\[
\|(\partial_t+\tau)v\|_{L^2}
+\|L_0^{1/2}v\|_{L^2}
\le C\mu\tau^{1/4}\|Cv\|_{L^2}+C\|F\|_{L^2}
\le C\|F\|_{L^2}.
\]
Finally,
$v_t=(\partial_t+\tau)v-\tau v$. Combining the preceding estimates proves
\eqref{eq:critical-carleman}. Every use of the resolvent and of the endpoint
trace is justified by Lemma~\ref{lem:fourier-resolvent-representation}; no
additional strong-solution approximation is required.
\end{proof}

\begin{corollary}[Carleman estimate for the unconjugated equation]
\label{cor:unconjugated-carleman}
Let
\[
\mathcal L_\Gamma r=r_{tt}-L_0r-C^*\Gamma(t)Cr.
\]
For either choice of sign and every admissible
$\tau\ge C(1+\mu^2)$,
\begin{align}\label{eq:unconjugated-carleman}
&\tau\|e^{\pm\tau t}r\|_{L^2_t\HH}
+\|e^{\pm\tau t}r_t\|_{L^2_t\HH}
+\|e^{\pm\tau t}L_0^{1/2}r\|_{L^2_t\HH}
+\tau^{3/4}\|e^{\pm\tau t}Cr\|_{L^2_t}\notag\\
&\hspace{35mm}\le
C\|e^{\pm\tau t}\mathcal L_\Gamma r\|_{L^2_t\HH}
\end{align}
for compactly supported $r$.
\end{corollary}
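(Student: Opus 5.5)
The plan is to reduce Corollary~\ref{cor:unconjugated-carleman} to Theorem~\ref{thm:critical-carleman} by conjugating with the exponential weight. I will treat the sign $+$ first, then obtain $-$ by time reflection.

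\textbf{Case $+$.} Set
\[
v=e^{\tau t}r,
\qquad F=e^{\tau t}\mathcal L_\Gamma r .
\]
Since $\partial_t(e^{-\tau t}v)=e^{-\tau t}(\partial_t-\tau)v$, one computes
\[
e^{\tau t}r_{tt}=(\partial_t-\tau)^2v .
\]
The operators $L_0$ and $C^*\Gamma(t)C$ commute with multiplication by the scalar $e^{\tau t}$. Therefore
\[
\bigl((\partial_t-\tau)^2-L_0\bigr)v=C^*\Gamma(t)Cv+F .
\]
This equation has weight $(\partial_t-\tau)$ rather than the $(\partial_t+\tau)$ of \eqref{eq:carleman-forced-equation}. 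I fix it by reflection. Put
\[
\tilde v(t)=v(-t),\qquad \tilde\Gamma(t)=\Gamma(-t),\qquad \tilde F(t)=F(-t).
\]
Then $(\partial_t+\tau)^2\tilde v=\bigl[(\partial_t-\tau)^2v\bigr](-t)$, so $\tilde v$ satisfies \eqref{eq:carleman-forced-equation} with data $\tilde\Gamma,\tilde F$. The coefficient $\tilde\Gamma$ is still symmetric with $\|\tilde\Gamma\|_\infty\le\mu$, and every $L^2_t$ norm is invariant under reflection. The case $-$ is the same computation with $v=e^{-\tau t}r$; it gives $(\partial_t+\tau)^2$ directly, and no reflection is needed. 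Alternatively, one can repeat the Fourier argument with $z=-\tau+i\xi$, since every bound in Lemma~\ref{lem:endpoint-resolvent} depends only on $|\operatorname{Re}z|$ and is even in that variable.

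\textbf{Hypotheses of Theorem~\ref{thm:critical-carleman}.} Compact support and finite energy are preserved, because $e^{\pm\tau t}$ is smooth and bounded on the support together with its derivatives. We need $F\in L^2(\mathbb R;\HH)$. This holds provided the right-hand side of \eqref{eq:unconjugated-carleman} is finite; if it is infinite there is nothing to prove. The theorem then yields \eqref{eq:critical-carleman} for $v$ (or $\tilde v$).

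\textbf{Translating back to $r$.} The terms without time derivatives transfer directly:
\[
\tau\|v\|=\tau\|e^{\pm\tau t}r\|,\qquad \|L_0^{1/2}v\|=\|e^{\pm\tau t}L_0^{1/2}r\|,\qquad \tau^{3/4}\|Cv\|=\tau^{3/4}\|e^{\pm\tau t}Cr\|.
\]
For the derivative term,
\[
e^{\pm\tau t}r_t=v_t\mp\tau v,
\qquad\text{so}\qquad
\|e^{\pm\tau t}r_t\|\le\|v_t\|+\tau\|v\|.
\]
Both pieces on the right are already bounded by $C\|F\|$ in \eqref{eq:critical-carleman}. Combining these four bounds gives \eqref{eq:unconjugated-carleman}.

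The only delicate point is the sign and orientation bookkeeping: whether each weight produces $(\partial_t+\tau)$ or $(\partial_t-\tau)$, and checking that reflection preserves the symmetry and the bound on $\Gamma$. There is no genuine analytic obstacle.
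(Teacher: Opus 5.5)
Your proof is correct and is essentially the paper's argument: conjugate by the weight and apply Theorem~\ref{thm:critical-carleman}. Your time reflection $t\mapsto -t$ is the physical-space form of the paper's switch to $z=-\tau+i\xi$, since $(-\tau+i\xi)^2=(\tau-i\xi)^2$ only replaces $\xi$ by $-\xi$. Your sign bookkeeping is in fact more careful than the paper's one-line proof, which writes $v=e^{\mp\tau t}r$ where $v=e^{\pm\tau t}r$ is meant, and which leaves implicit the recovery of $\|e^{\pm\tau t}r_t\|$ from $v_t\mp\tau v$.
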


\begin{proof}
Apply Theorem~\ref{thm:critical-carleman} to
$v=e^{\mp\tau t}r$. The proof for the opposite sign uses
$z=-\tau+i\xi$; all resolvent bounds are unchanged because they depend on
$z^2$ and on the distance of $|\Re z|$ from the spectral lattice.
\end{proof}

\begin{lemma}[Cylinder Caccioppoli estimate]
\label{lem:cylinder-caccioppoli}
Let $r$ solve \eqref{eq:half-density-equation} on an interval $J^+$ and let
$J\Subset J^+$ have positive distance from the boundary. If
$\|\Gamma\|_\infty\le\mu$, then
\begin{equation}\label{eq:cylinder-caccioppoli}
\|r_t\|_{L^2(J;\HH)}
+\|L_0^{1/2}r\|_{L^2(J;\HH)}
\le C_J(1+\mu^2)\|r\|_{L^2(J^+;\HH)}.
\end{equation}
\end{lemma}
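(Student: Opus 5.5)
The plan is a standard energy (Caccioppoli) argument in the logarithmic variable. The Robin boundary term is controlled by the critical trace inequality of Theorem~\ref{thm:trace}, and the $-\mu^4$ lower scale of Theorem~\ref{thm:form-bound} should produce exactly the factor $(1+\mu^2)$ after taking square roots.

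Fix a cutoff $\chi\in C_c^\infty(J^+)$ with $\chi=1$ on $J$, $0\le\chi\le1$ and $|\chi'|+|\chi''|\le C_J$. Test the weak form of \eqref{eq:half-density-equation} against $\chi^2 r$. This is legitimate in the finite-energy class by the Galerkin justification of Appendix~\ref{app:functional}, and it is easiest to do at the level of the weak identity \eqref{eq:weak-cylinder-local} transported to $r=e^{t/2}w$. Integrating by parts in $t$ gives
\[
\int\chi^2\Bigl(\|r_t\|_{\HH}^2+\mathfrak a_0[r,r]+\tfrac14\|r\|_{\HH}^2+(\Gamma Cr,Cr)\Bigr)\,dt
=-2\int\chi\chi'(r_t,r)_{\HH}\,dt .
\]
The cross term is bounded by $\tfrac12\int\chi^2\|r_t\|^2+C\int|\chi'|^2\|r\|^2$, so it is absorbed. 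Since $\|L_0^{1/2}f\|^2=\mathfrak a_0[f,f]+\tfrac14\|f\|^2$, the left side contains $\int\chi^2\|L_0^{1/2}r\|^2$.

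The boundary term is the main point. Pointwise in $t$, use $|(\Gamma Cr,Cr)|\le\mu|Cr|^2$ and apply \eqref{eq:trace-epsilon} to $\chi r(t)$ with $\varepsilon=\min\{1,(2C\mu)^{-1}\}$. This gives
\[
\mu|C(\chi r)|^2\le\tfrac12\mathfrak a_0[\chi r,\chi r]+C(1+\mu^4)\|\chi r\|_{\HH}^2,
\]
which is precisely the mechanism behind \eqref{eq:robin-lower}. Absorbing the half of $\mathfrak a_0$ yields
\[
\int_J\bigl(\|r_t\|^2+\|L_0^{1/2}r\|^2\bigr)\le C_J(1+\mu^4)\int_{J^+}\|r\|_{\HH}^2 .
\]
Taking square roots gives \eqref{eq:cylinder-caccioppoli} with the factor $1+\mu^2$.

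The only delicate step is the justification of testing with $\chi^2 r$ and of the $t$-integration by parts when $r_{tt}$ lies only in $L^2(\mathcal V^*)$. The plan is to use the $\mathcal V^*$–$\mathcal V$ duality pairing $\langle r_{tt},\chi^2 r\rangle$ together with the identity $\int\langle r_{tt},\chi^2r\rangle=-\int(r_t,(\chi^2r)_t)$. That identity holds for $r\in L^2(\mathcal V)\cap H^1(\HH)$ with $r_{tt}\in L^2(\mathcal V^*)$ by density of smooth $\mathcal V$-valued functions (a Lions–Magenes type lemma). Alternatively, one can project to $\HH_N$ modes, where everything is finite-dimensional, and pass to the limit $N\to\infty$, using that the boundary term is continuous on $\mathcal V$ by Theorem~\ref{thm:trace}.
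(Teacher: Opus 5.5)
Your proposal is correct and takes the same route as the paper. You test the weak half-density equation with $\chi^2 r$ and bound the Robin term by the $\varepsilon$-form of the critical trace inequality with $\varepsilon\sim(1+\mu)^{-1}$; after absorbing half of the angular form and the cutoff cross term, the resulting $C_J(1+\mu^4)$ bound gives the stated estimate on taking square roots. Your remark on justifying the test function $\chi^2 r$ in $L^2(\mathcal V)\cap H^1(\HH)$ (by density or Galerkin truncation) spells out a step the paper leaves implicit.
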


\begin{proof}
Test the weak equation with $\eta^2r$, where $\eta$ equals one on $J$ and
is supported in $J^+$. The endpoint term is estimated using the
infinitesimal trace inequality
\[
|Cr|^2\le C\left(\varepsilon\mathfrak a_0[r]
 +\varepsilon^{-3}\|r\|_{\HH}^2\right).
\]
Choose $\varepsilon=c(1+\mu)^{-1}$ and absorb the angular-form term. The
remaining zeroth-order contribution is bounded by
$C(1+\mu^4)\|r\|^2$. Young's inequality for the cutoff term proves the
squared version of \eqref{eq:cylinder-caccioppoli}; taking square roots gives
the statement.
\end{proof}

\section{Quantitative centered bulk propagation}
\label{sec:centered-bulk}

For an interval $I\subset\mathbb R$, write
\[
\mathcal N_I(r)=\|r\|_{L^2(I;\HH)}.
\]
All intervals below have fixed lengths; changing their endpoints by fixed
amounts only changes absolute constants.

\begin{lemma}[Three-cylinder inequality]
\label{lem:three-cylinder}
There exist $\theta\in(0,1)$ and $C>0$ such that every solution of
\eqref{eq:half-density-equation} on $(-4,3)$, with
$\|\Gamma\|_\infty\le\mu$, satisfies
\begin{equation}\label{eq:three-cylinder}
\mathcal N_{(-1,0)}(r)
\le C e^{C(1+\mu^2)}
\mathcal N_{(-3,-2)}(r)^\theta
\mathcal N_{(-4,3)}(r)^{1-\theta}.
\end{equation}
The same estimate holds after translating the logarithmic interval and the
spatial center of the Grushin coordinates.
\end{lemma}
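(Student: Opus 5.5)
The plan is to derive the three-cylinder inequality from the unconjugated Carleman estimate, Corollary~\ref{cor:unconjugated-carleman}, applied with a linear weight \(e^{\tau t}\). We combine it with the Caccioppoli estimate, Lemma~\ref{lem:cylinder-caccioppoli}, to control the commutator terms. Finally we optimize over the admissible parameter \(\tau\ge C(1+\mu^2)\).

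\textbf{Step 1: cutoff.} Fix a cutoff \(\chi\in C_c^\infty(-3.5,2.5)\) with \(\chi=1\) on \([-2,1]\). Then \(\nabla\chi\) is supported in \(E_-=(-3.5,-2)\cup E_+\), where \(E_+=(1,2.5)\). Put \(\tilde r=\chi r\). Since \(\chi\) depends only on \(t\), the endpoint term commutes with it, and
\[
\mathcal L_\Gamma\tilde r=\chi''r+2\chi'r_t\in L^2_t\HH .
\]
The weak-class justification for compactly supported test profiles is the one already used for Theorem~\ref{thm:critical-carleman}.

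\textbf{Step 2: Carleman estimate.} Apply \eqref{eq:unconjugated-carleman} with the weight \(e^{\tau t}\). This weight is increasing, so it favours the far region \(t>1\); I would instead take \(e^{-\tau t}\), which is largest at the observed cylinder \((-3,-2)\), more precisely on \(E_-\). Keeping only \(\tau\|e^{-\tau t}\tilde r\|\) on the left, restricted to \((-1,0)\), gives
\[
\tau e^{0}\,\mathcal N_{(-1,0)}(r)\le C\bigl(e^{3.5\tau}\|(r,r_t)\|_{L^2(E_-)}+e^{-\tau}\|(r,r_t)\|_{L^2(E_+)}\bigr).
\]

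\textbf{Step 3: remove \(r_t\).} Lemma~\ref{lem:cylinder-caccioppoli} bounds \(\|r_t\|\) on \(E_\pm\) by \(C(1+\mu^2)\) times \(\mathcal N\) on slightly enlarged intervals. On \(E_+\) the enlargement lies inside \((-4,3)\), which is the reason for the choice of lengths. On \(E_-\) it lies inside \((-4,-1.8)\). This is where the geometry is delicate: the observed interval \((-3,-2)\) must control the cutoff region near \(-2\) to \(-3.5\). I would therefore adjust the cutoff so that \(\chi'\) on the left is supported in \((-2.9,-2.1)\), with Caccioppoli applied from \((-3,-2)\). The outer lengths only change constants, as the section already remarks.

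\textbf{Step 4: optimization.} We obtain
\[
\mathcal N_{(-1,0)}\le C(1+\mu^2)\bigl(e^{a\tau}\mathcal N_{(-3,-2)}+e^{-b\tau}\mathcal N_{(-4,3)}\bigr)
\]
with fixed \(a,b>0\), valid for every admissible \(\tau\ge\tau_0:=C(1+\mu^2)\). Admissible values exist in every interval of length \(2\). If the choice balancing the two terms, \(\tau\approx\frac1{a+b}\log(\mathcal N_{(-4,3)}/\mathcal N_{(-3,-2)})\), is \(\ge\tau_0\), we use it (shifted to an admissible value at constant cost). This yields the interpolation with \(\theta=b/(a+b)\). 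Otherwise \(\mathcal N_{(-4,3)}\le e^{(a+b)\tau_0}\mathcal N_{(-3,-2)}\), and the trivial bound \(\mathcal N_{(-1,0)}\le\mathcal N_{(-4,3)}\) already gives the claim with prefactor \(e^{C(1+\mu^2)}\). The factor \(1+\mu^2\) is absorbed into the exponential. Translation invariance in \(t\) and in the spatial centre follows because the Carleman constants do not depend on either.

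\textbf{Main obstacle.} The hard part is Step 3: making Caccioppoli's loss \((1+\mu^2)\) compatible with the cutoff placement, so that the commutator on the small side is controlled purely by \(\mathcal N_{(-3,-2)}\).
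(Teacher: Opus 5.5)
Your proposal, in its final form (weight \(e^{-\tau t}\), with the left transition of the cutoff moved inside \((-3,-2)\)), is essentially the paper's proof: apply the Carleman estimate to the cut-off solution, use Caccioppoli on the two commutator zones to get \(\mathcal N_{(-1,0)}\le C(1+\mu^2)\bigl(e^{a\tau}\mathcal N_{(-3,-2)}+e^{-b\tau}\mathcal N_{(-4,3)}\bigr)\), then optimize over admissible \(\tau\), falling back on the trivial inclusion bound when the optimizer lies below \(\tau_0\). In a write-up, discard the initial choices of \(e^{\tau t}\) and of a left transition in \((-3.5,-2)\), and handle the degenerate case \(\mathcal N_{(-3,-2)}=0\) by letting admissible \(\tau\to\infty\), as the paper does.
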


\begin{proof}
Fix \(0\le\eta\le1\) with
\[
\eta\in C_c^\infty((-3,2)),
\qquad
\eta=1\quad\text{on }[-2,1],
\]
and choose it so that
\[
\operatorname{supp}\eta'\subset
J_-\cup J_+,
\quad
J_-=\left(-\frac{11}{4},-\frac94\right),
\quad
J_+=\left(\frac54,\frac74\right).
\]
The slightly enlarged intervals required by Caccioppoli may be taken inside
\((-3,-2)\) and \((1,2)\), respectively. Apply
Corollary~\ref{cor:unconjugated-carleman} to \(\eta r\) with the weight
\(e^{-\tau t}\), which increases toward the left. Since
\[
\mathcal L_\Gamma(\eta r)=2\eta'r_t+\eta''r,
\]
Lemma~\ref{lem:cylinder-caccioppoli} and the minimum of the weight on
\((-1,0)\) give, for every admissible
\(\tau\ge\tau_0:=C_*(1+\mu^2)\),
\begin{equation}\label{eq:three-cylinder-preoptimized}
X_2\le C(1+\mu^2)
\left(e^{a\tau}X_1+e^{-b\tau}X_3\right),
\qquad
a=\frac{11}{4},\quad b=\frac54,
\end{equation}
where
\[
X_1=\mathcal N_{(-3,-2)}(r),\qquad
X_2=\mathcal N_{(-1,0)}(r),\qquad
X_3=\mathcal N_{(-4,3)}(r).
\]
Indeed, on \(J_-\) one has \(e^{-\tau t}\le e^{11\tau/4}\), whereas on
\(J_+\) one has \(e^{-\tau t}\le e^{-5\tau/4}\); on \((-1,0)\) the
weight is at least one. This records the two weight gaps and the absorption
of the outer commutators explicitly.

Since \(X_1\le X_3\), the case \(X_1=0\) follows by sending
\(\tau\to\infty\) through admissible values. If \(X_1>0\), set
\[
\tau_*=(a+b)^{-1}\log(X_3/X_1),
\qquad
\theta=\frac{b}{a+b}=\frac5{16}.
\]
When \(\tau_*\ge\tau_0\), choose an admissible
\(\tau\in[\tau_*,\tau_*+2]\). Then
\[
e^{a\tau}X_1+e^{-b\tau}X_3
\le C X_1^\theta X_3^{1-\theta}.
\]
When \(\tau_*<\tau_0\), the elementary inclusion bound \(X_2\le X_3\)
and \(X_3/X_1<e^{(a+b)\tau_0}\) give
\[
X_2\le e^{b\tau_0}X_1^\theta X_3^{1-\theta}.
\]
Finally, the polynomial factor in \(1+\mu^2\) is absorbed into
\(e^{C(1+\mu^2)}\). The cutoff, the intervals, and the two numerical weight
gaps are invariant under translation in \(t\); translating the boundary
center changes neither the logarithmic equation nor these constants.
\end{proof}

\begin{proposition}[Uniform annular doubling from one fixed annulus]
\label{prop:uniform-annular-doubling}
Let $r$ solve \eqref{eq:half-density-equation} on $(-\infty,3)$ and assume
$\|\Gamma\|_\infty\le\mu$. Put
\[
G=\mathcal N_{(-4,3)}(r),
\qquad
A=\mathcal N_{(-1,0)}(r)>0,
\qquad
\Lambda=\log\frac{eG}{A}.
\]
There exists $C>0$ such that, for every $T\le-4$,
\begin{equation}\label{eq:uniform-annular-doubling}
\mathcal N_{(T,T+1)}(r)
\le
\exp\bigl(C(1+\mu^2+\Lambda)\bigr)
\mathcal N_{(T-3,T)}(r).
\end{equation}
Consequently, for every $T\le-4$, the three-unit inward block satisfies
\begin{equation}\label{eq:inward-lower-propagation}
\mathcal N_{(T-3,T)}(r)
\ge A\exp\bigl[-C(1+\mu^2+\Lambda)(1+|T|)\bigr].
\end{equation}
\end{proposition}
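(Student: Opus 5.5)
The plan is to prove both assertions directly from the Carleman estimate, Corollary~\ref{cor:unconjugated-carleman}, with the weight \(e^{-\tau t}\), which increases toward the left. The lower bound \eqref{eq:inward-lower-propagation} comes first, and the doubling \eqref{eq:uniform-annular-doubling} is then obtained by feeding it back in. I do not expect to iterate Lemma~\ref{lem:three-cylinder}. Iterating a three-ball inequality inward would let the frequency ratio grow geometrically, not linearly, in \(|T|\). The Carleman argument is simply that lemma's proof with the left cutoff moved to \(T\).

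\emph{Step 1: inward lower bound.} Fix \(T\le-4\). Take a cutoff \(\eta\) with \(\eta=1\) on \([T-2,1]\), supported in \((T-3,2)\), and with \(\operatorname{supp}\eta'\subset(T-3,T-2)\cup(1,2)\). Choose its derivative bounds uniformly in \(T\); since \(T-2\le-6<1\) the two transition regions never interact. Apply the Carleman estimate to \(\eta r\) exactly as in the proof of Lemma~\ref{lem:three-cylinder}. The commutator is \(2\eta' r_t+\eta'' r\), and Lemma~\ref{lem:cylinder-caccioppoli} bounds it on slightly enlarged intervals inside \((T-3,T)\) and \((1,3)\); all of these intervals lie in \((-4,3)\cup(-\infty,T)\). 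The weight is at least \(1\) on \((-1,0)\), at most \(e^{\tau(|T|+3)}\) on the left transition region, and at most \(e^{-\tau}\) on the right one. This gives
\[
A\le C(1+\mu^2)\Bigl(e^{\tau(|T|+3)}\mathcal N_{(T-3,T)}(r)+e^{-\tau}G\Bigr)
\]
for every admissible \(\tau\ge C_*(1+\mu^2)\). Since \(G=e^{\Lambda-1}A\), I will choose an admissible \(\tau\) in \([\tau_1,\tau_1+2]\), where \(\tau_1=C_*(1+\mu^2)+\Lambda+\log(4C(1+\mu^2))\). This makes the \(G\)-term at most \(A/2\). Absorbing it and using \(\tau\le C(1+\mu^2+\Lambda)\) then yields \eqref{eq:inward-lower-propagation} with \(K:=C(1+\mu^2+\Lambda)\).

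\emph{Step 2: doubling.} Repeat the same construction, but read off the norm on \((T,T+1)\), where the weight is at least \(e^{\tau|T|-\tau}\), in place of the norm on \((-1,0)\). Dividing by that weight gives
\[
\mathcal N_{(T,T+1)}(r)\le C(1+\mu^2)\Bigl(e^{4\tau}\mathcal N_{(T-3,T)}(r)+e^{-\tau|T|}G\Bigr).
\]
To absorb the last term, combine \(G=e^{\Lambda-1}A\) with Step 1, which gives \(A\le e^{K(1+|T|)}\mathcal N_{(T-3,T)}(r)\). Hence
\[
e^{-\tau|T|}G\le e^{\Lambda+K(1+|T|)-\tau|T|}\,\mathcal N_{(T-3,T)}(r).
\]
Because \(|T|\ge4\), any admissible \(\tau\ge 3K+\Lambda+C_*(1+\mu^2)\), chosen within distance \(2\) of that threshold, makes the exponent nonpositive. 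Then both terms are bounded by \(e^{C\tau}\mathcal N_{(T-3,T)}(r)\) with \(\tau\lesssim1+\mu^2+\Lambda\), which is \eqref{eq:uniform-annular-doubling}; the factor \(1+\mu^2\) is absorbed into the exponential.

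The main obstacle is the uniformity in \(T\) in Step 1. The constants must not deteriorate as the support of the cutoff grows, and this holds because Corollary~\ref{cor:unconjugated-carleman} has constants independent of the support length. The finite-energy hypothesis on \(\eta r\) needs checking on the long interval \((T-3,3)\); I would cite the Galerkin framework of Appendix~\ref{app:functional} for it. The admissibility of \(\tau\) costs only an additive \(2\) and is harmless.
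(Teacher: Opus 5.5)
Your proposal is correct and follows essentially the paper's argument: the same cutoff with its left transition next to $T$, the weight $e^{-\tau t}$ from Corollary~\ref{cor:unconjugated-carleman}, Caccioppoli on the two transition regions, and absorption of the $G$-term via $G=e^{\Lambda-1}A$. The only difference is bookkeeping. The paper keeps both $\mathcal N_{(T,T+1)}(r)$ and $A$ on the left of a single inequality, absorbs $e^{-2\tau}G^2$ into $\tau^2A^2$ with one choice $\tau\approx C_0(1+\mu^2+\Lambda)$, and reads off both conclusions at once; you instead prove \eqref{eq:inward-lower-propagation} first and feed it back with a second, larger $\tau$ to absorb $e^{-\tau|T|}G$ in the doubling estimate, which is equally valid but slightly less economical.
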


\begin{proof}
For a fixed $T\le-4$, choose $\eta_T$ equal to zero on
$(-\infty,T-2]$, equal to one on $[T-1,1]$, and zero on $[2,\infty)$.
Apply \eqref{eq:unconjugated-carleman} with the weight $e^{-\tau t}$ to
$\eta_Tr$. The commutator is supported in $(T-2,T-1)$ and $(1,2)$.
By Lemma~\ref{lem:cylinder-caccioppoli},
\begin{align}\label{eq:doubling-cutoff-pre}
&\tau^2 e^{-2\tau(T+1)}
 \mathcal N_{(T,T+1)}(r)^2
 +\tau^2 A^2\notag\\
&\quad\le C(1+\mu^2)^2
\left[
 e^{-2\tau(T-2)}\mathcal N_{(T-3,T)}(r)^2
 +e^{-2\tau}G^2
\right].
\end{align}
Choose an admissible
\[
\tau\in\left[
 C_0(1+\mu^2+\Lambda),
 C_0(1+\mu^2+\Lambda)+2
\right]
\]
with $C_0$ sufficiently large. Because
$G/A=e^{\Lambda-1}$, the last term in
\eqref{eq:doubling-cutoff-pre} is at most
$\frac12\tau^2A^2$. Dropping the $A^2$ term and comparing the weights
$e^{-2\tau(T+1)}$ and $e^{-2\tau(T-2)}$ gives
\[
\mathcal N_{(T,T+1)}(r)
\le C(1+\mu^2)e^{3\tau}
\mathcal N_{(T-3,T)}(r),
\]
which is \eqref{eq:uniform-annular-doubling}. On the other hand, dropping
the first term on the left of \eqref{eq:doubling-cutoff-pre} gives directly
\[
A\le C(1+\mu^2)\tau^{-1}e^{-\tau(T-2)}
\mathcal N_{(T-3,T)}(r).
\]
Since $T\le-4$ and
$\tau\le C(1+\mu^2+\Lambda)$, this rearranges to
\eqref{eq:inward-lower-propagation}, after enlarging the absolute constant.
\end{proof}

For $x_c\in\mathbb R$ and $\rho>0$, define the upper
Grushin gauge ball
\begin{equation}\label{eq:gauge-ball-definition}
\mathcal G_\rho(x_c)
=\left\{(x,z)\in\mathbb R\times(0,\infty):
 z^4+4(x-x_c)^2<\rho^4\right\}.
\end{equation}

\begin{lemma}[A quantitative boundary seed]
\label{lem:quantitative-boundary-seed}
Let $v$ be real-valued, $\|v\|_{L^\infty(\mathbb R)}\le C_0$, and suppose that
for some $\alpha\in(0,1/2)$,
\[
[v]_{C^\alpha(x_s-1/2,x_s+1/2)}\le H,
\qquad |v(x_s)|\ge c_0>0.
\]
Let $W(x,z)=U(x,z^2/2)$ be the transformed Caffarelli--Silvestre
extension. There are absolute constants $c,C>0$ such that, with
\[
\ell=\min\left\{\frac18,\left(\frac{c_0}{8H}\right)^{1/\alpha}\right\},
\qquad
 y_0=c\ell\left(\frac{c_0}{1+C_0}\right)^2,
\qquad z_0=(2y_0)^{1/2},
\]
one has, after multiplying $v$ by $-1$ if necessary,
\begin{equation}\label{eq:poisson-seed-pointwise}
W(x,z)\ge \frac{c_0}{8}
\quad\text{whenever}\quad
|x-x_s|\le\frac\ell4,\quad 0<z\le z_0.
\end{equation}
Consequently, every fixed gauge ball $\mathcal G_{\rho_s}(x_s)$ containing
this rectangle satisfies
\begin{equation}\label{eq:poisson-seed-norm}
\|zW\|_{L^2(\mathcal G_{\rho_s}(x_s))}
\ge c\,c_0\,\ell^{1/2}z_0^{3/2}.
\end{equation}
\end{lemma}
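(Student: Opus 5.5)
The plan is to use the explicit Poisson kernel of the Caffarelli--Silvestre extension for $s=1/4$,
\[
U(x,y)=\int_{\mathbb R}P_y(x-x')v(x')\,\dd x',\qquad
P_y(x)=c_{1/4}\frac{y^{1/2}}{(x^2+y^2)^{3/4}},
\]
which is positive, has unit mass, and satisfies the scaling $P_y(x)=y^{-1}P_1(x/y)$ with tail $P_1(x)\asymp|x|^{-3/2}$. We may assume $v(x_s)\ge c_0$ after a sign change. The H\"older bound then gives, for $|x'-x_s|\le 2\ell$,
\[
v(x')\ge c_0-H(2\ell)^\alpha\ge c_0-\frac{c_0}{4}\cdot 2^\alpha\ge \frac{c_0}{2},
\]
by the definition of $\ell$, after adjusting the numerical constant $8$ if necessary.

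Fix $x$ with $|x-x_s|\le\ell/4$ and split $U(x,y)$ into two pieces:
\begin{itemize}
\item the near part, over $|x'-x|\le \ell$, which lies inside $|x'-x_s|\le 2\ell$;
\item the far part, over $|x'-x|>\ell$.
\end{itemize}
The near part is at least $\frac{c_0}{2}\bigl(1-\int_{|x'-x|>\ell}P_y\bigr)$. The far part is at least $-C_0\int_{|x'-x|>\ell}P_y$. Scaling and the $|x|^{-3/2}$ tail give
\[
\int_{|h|>\ell}P_y(h)\,\dd h=\int_{|h|>\ell/y}P_1\le C(y/\ell)^{1/2}.
\]
Hence
\[
U(x,y)\ge \frac{c_0}{2}-\Bigl(\frac{c_0}{2}+C_0\Bigr)C\Bigl(\frac{y}{\ell}\Bigr)^{1/2}.
\]
Choosing $y\le y_0=c\ell\,(c_0/(1+C_0))^2$ with $c$ small makes the error at most $c_0/4$, noting $c_0\le C_0$. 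So $U\ge c_0/4\ge c_0/8$. Since $W(x,z)=U(x,z^2/2)$, the condition $y\le y_0$ is exactly $z\le z_0$, which proves \eqref{eq:poisson-seed-pointwise}.

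For \eqref{eq:poisson-seed-norm}, integrate $z^2W^2\ge z^2c_0^2/64$ over the rectangle $|x-x_s|\le\ell/4$, $0<z\le z_0$:
\[
\|zW\|^2_{L^2}\ge \frac{c_0^2}{64}\cdot\frac{\ell}{2}\cdot\frac{z_0^3}{3}.
\]
Taking square roots gives the claim.

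The main obstacle is justifying the Poisson representation for merely bounded $v$, and in particular that the extension $U$ used throughout coincides with the Poisson integral. This is fine because $P_y\in L^1$ and $v\in L^\infty$, so the integral converges absolutely. I would cite Proposition~\ref{prop:extension-realization} for the identification of the realization; the rest is the elementary tail computation above.
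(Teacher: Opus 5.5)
Your proposal is correct and is essentially the paper's proof. Both arguments use the positive unit-mass Poisson kernel, a H\"older lower bound for $v$ near $x_s$, a near/far split with tail mass at most $C(y/\ell)^{1/2}$, a choice of $y_0$ that absorbs this tail, and integration of $z^2W^2$ over the rectangle. The differences are only in constants: you use $v\ge c_0/2$ on $|x'-x_s|\le 2\ell$ with cutoff $\ell$, while the paper uses $v\ge 7c_0/8$ on $|x'-x_s|<\ell$ with cutoff $\ell/2$. No adjustment of the constant $8$ is needed, since $H(2\ell)^\alpha\le 2^\alpha c_0/8<c_0/4$.
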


\begin{proof}
Assume $v(x_s)\ge c_0$. The choice of $\ell$ gives
$v\ge7c_0/8$ on $(x_s-\ell,x_s+\ell)$. For $s=1/4$, the Poisson kernel is
\[
P_y(h)=c_*\frac{y^{1/2}}{(h^2+y^2)^{3/4}},
\qquad \int_{\mathbb R}P_y(h)\,dh=1,
\]
and therefore, uniformly for $|x-x_s|\le\ell/4$,
\[
\int_{|x-h-x_s|>\ell/2}P_y(h)\,dh
\le C\left(\frac y\ell\right)^{1/2}.
\]
Splitting $U(x,y)=\int P_y(h)v(x-h)\,dh$ into this local region and its
complement gives
\[
U(x,y)\ge \frac{7c_0}{8}(1-\varepsilon)-C_0\varepsilon,
\qquad
\varepsilon\le C(y/\ell)^{1/2}.
\]
The definition of $y_0$ makes the right-hand side at least $c_0/8$ for
$0<y\le y_0$, proving \eqref{eq:poisson-seed-pointwise} after
$y=z^2/2$. Finally, integration over
$|x-x_s|\le\ell/4$, $0<z<z_0$, gives
\[
\|zW\|_{L^2}^2
\ge c c_0^2\ell\int_0^{z_0}z^2\,dz
=c c_0^2\ell z_0^3,
\]
which proves \eqref{eq:poisson-seed-norm}.
\end{proof}

\begin{lemma}[Fixed-scale gauge-ball propagation]
\label{lem:gauge-ball-propagation}
Set
\[
\rho_1=\frac12,
\qquad
\rho_2=\frac34,
\qquad
\rho_3=1,
\qquad
h_*=\frac18.
\]
There is \(\theta\in(0,1)\) with the following property. For every boundary
center \(x_c\in\mathbb R\), solutions of the Grushin--Robin problem satisfy
\begin{equation}\label{eq:gauge-three-ball}
\|zW\|_{L^2(\mathcal G_{\rho_2}(x_c))}
\le
C e^{C(1+M^2)}
\|zW\|_{L^2(\mathcal G_{\rho_1}(x_c))}^{\theta}
\|zW\|_{L^2(\mathcal G_{\rho_3}(x_c))}^{1-\theta},
\end{equation}
provided \(\|b\|_\infty\le M\) on the boundary projection of the largest
ball. If \(|x_{j+1}-x_j|\le h_*\), then
\begin{equation}\label{eq:gauge-overlap-containment}
\mathcal G_{\rho_1}(x_j)\subset
\mathcal G_{\rho_2}(x_{j+1}).
\end{equation}
Consequently, along every chain of at most $L_*$ such centers, where $L_*$
is fixed, a lower bound $B_0$ for the first small-ball norm propagates to
\begin{equation}\label{eq:gauge-chain-lower}
\|zW\|_{L^2(\mathcal G_{\rho_1}(x_{L_*}))}
\ge
\exp\{-C_{L_*}(1+M^2+|\log B_0|+\log(1+U))\},
\end{equation}
where $U$ is a common upper bound for the largest-ball norms.
\end{lemma}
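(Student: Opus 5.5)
The plan has three parts: prove the three-ball inequality \eqref{eq:gauge-three-ball} with the logarithmic-cylinder machinery, check the containment \eqref{eq:gauge-overlap-containment} by hand, and then iterate along the chain.

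\textbf{Three-ball inequality.} Translate $x\mapsto x-x_c$. The Grushin--Robin problem is invariant under this translation, so the polar coordinates of Proposition~\ref{prop:polar} centred at $x_c$ again produce \eqref{eq:half-density-equation}. In logarithmic cylinder coordinates the weighted norm $\|zW\|_{L^2(\mathcal G_\rho)}$ becomes a weighted $L^2$ norm of $w$ over $t<\log\rho$. Indeed, $dx\,dz$ equals $\rho^2$ times a smooth angular density times $d\rho\,d\varphi$. The factor $z^2=\rho^2\sin\varphi$ and this density combine with $\omega$ to give a bounded multiple of $e^{ct}\,\dd t$ times the $\HH$-measure. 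I expect the two measures to agree up to a factor $\sin^{1/2}\varphi\le1$, with the reverse comparison only on part of the domain. If that reverse comparison fails, I would instead use the Caccioppoli estimate (Lemma~\ref{lem:cylinder-caccioppoli}) to pass between the two norms at the cost of $C(1+M^2)$.

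The radii $\rho_1<\rho_2<\rho_3$ become logarithmic levels $\log\frac12<\log\frac34<0$. Here the inner ball is the \emph{small} set and the outer norm is the full ball, so the interval $(-\infty,\log\rho_3)$ is not of fixed length. I would handle this with the Carleman estimate of Corollary~\ref{cor:unconjugated-carleman}, using the weight $e^{-\tau t}$, which increases toward the origin, and a cutoff equal to one on $(-\infty,\log\rho_2+\epsilon]$ and vanishing near $\log\rho_3$. This has a single commutator region near the outer radius. The small ball then appears on the left-hand side with the large weight, which is the wrong direction.

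So I would instead use the cylinder form with the weight $e^{+\tau t}$, cut off near $t=-\infty$. The inner ball region is where the commutator sits, together with a region $(\log\rho_2,\log\rho_3)$; the annulus $\log\rho_1<t<\log\rho_2$ is where we estimate. This is exactly Lemma~\ref{lem:three-cylinder} after rescaling the unit of $t$ by a fixed factor. The Robin size is $\mu\le C\rho_3M$ by \eqref{eq:beta-scale}, so the cost is $e^{C(1+M^2)}$.

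This bounds the annulus $\mathcal G_{\rho_2}\setminus\mathcal G_{\rho_1}$. Adding the trivial bound for $\mathcal G_{\rho_1}$, which is at most $X_1\le X_1^\theta X_3^{1-\theta}$ since $X_1\le X_3$, gives \eqref{eq:gauge-three-ball}. Intervals of infinite length near $t=-\infty$ cause no difficulty, because only a bounded commutator band inside $\mathcal G_{\rho_1}$ is used.

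\textbf{Containment.} For $(x,z)\in\mathcal G_{1/2}(x_j)$ we have $z^4<1/16$ and $2|x-x_j|<1/4$. Then $2|x-x_{j+1}|<1/4+1/4=1/2$, so
\[
z^4+4(x-x_{j+1})^2<\tfrac1{16}+\tfrac14<\tfrac{81}{256}.
\]
I would double-check this numeric bound, and shrink $h_*$ if it fails.

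\textbf{Chain iteration.} Let $B_k=\|zW\|_{L^2(\mathcal G_{\rho_1}(x_k))}$. By containment, $B_k\le\|zW\|_{L^2(\mathcal G_{\rho_2}(x_{k+1}))}$. The three-ball inequality then gives
\[
B_k\le Ce^{C(1+M^2)}B_{k+1}^{\theta}U^{1-\theta},
\]
which rearranges to
\[
\log B_{k+1}\ge \theta^{-1}\bigl(\log B_k-C(1+M^2)-(1-\theta)\log(1+U)\bigr).
\]
Iterating at most $L_*$ times yields \eqref{eq:gauge-chain-lower} with $C_{L_*}\sim\theta^{-L_*}$.

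The main obstacle is the identification of the ball norms with cylinder norms near the degenerate endpoints and near $t\to-\infty$. If the measures are not comparable, I would apply the three-cylinder estimate to the annuli directly and control the inner ball by the Caccioppoli estimate.
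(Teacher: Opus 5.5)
Your containment check and the chain recursion are correct; your cruder bound $z^4+4(x-x_{j+1})^2<\frac1{16}+\frac14=\frac{80}{256}<\frac{81}{256}$ suffices. The three-ball step, however, fails as written because the Carleman weight has the wrong sign.

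Take your configuration:
\begin{itemize}
\item weight $e^{+\tau t}$;
\item cutoff transitions in a band $(t_0,t_1)$ with $t_1<\log\rho_1$, and in $(t_2,t_3)\subset(\log\rho_2,\log\rho_3)$;
\item observation annulus $(\log\rho_1,\log\rho_2)$, where the weight is at least $\rho_1^{\tau}$.
\end{itemize}
Dividing by this minimum gives
\[
Y\le C(1+M^2)\bigl(e^{-\tau(\log\rho_1-t_1)}B_1+e^{\tau(t_3-\log\rho_1)}B_3\bigr).
\]
So the exponentially large factor multiplies the outer norm $B_3$, and the small one multiplies $B_1$. Optimizing over $\tau\ge C(1+M^2)$ then yields only the trivial bound $Y\le e^{C(1+M^2)}B_3$, with no power of $B_1$.

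What went wrong in your first attempt was not the monotonicity of $e^{-\tau t}$. It was the cutoff being identically one down to $t=-\infty$. Then $\eta r$ is not compactly supported in $t$, Corollary~\ref{cor:unconjugated-carleman} does not apply, and the inner ball has no way to enter through a commutator.

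The repair, which is the paper's route, is to keep the weight $e^{-\tau t}$, which is largest toward the center, and cut off at both ends. The paper takes $\eta=1$ on $[\log\frac12,\log\frac34]$ with $\operatorname{supp}\eta'\subset(\log\frac14,\log\frac38)\cup(\log\frac78,\log\frac{15}{16})$. The Caccioppoli enlargements are kept below $\log\frac12$ and between $\log\frac34$ and $0$. Relative to the minimum weight $(4/3)^\tau$ on the annulus, the inner commutator costs $3^\tau$ and the outer one gains $(7/6)^{-\tau}$. Hence $Y\le C(1+M^2)\bigl(3^\tau B_1+(7/6)^{-\tau}B_3\bigr)$. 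The optimization of Lemma~\ref{lem:three-cylinder}, including the case where the optimizer lies below the threshold, then gives $\theta=\log(7/6)/\log(7/2)$.

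Two related points about your appeal to Lemma~\ref{lem:three-cylinder}:
\begin{itemize}
\item That lemma itself uses $e^{-\tau t}$, so saying your argument is ``exactly'' that lemma contradicts your choice of sign.
\item The lemma cannot be used as a black box here. Its outer interval reaches three units beyond the observation interval, whereas here $\log\rho_3-\log\rho_2=\log\frac43$. Rescaling $t$ is not a symmetry, since $L_0$ is fixed, so its proof must be rerun on the new fixed intervals.
\end{itemize}

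Finally, the norm identification you were unsure about is exact. The Jacobian $\rho^2/(2\sqrt{\sin\varphi})$ times $z^2=\rho^2\sin\varphi$ gives $z^2\,dx\,dz=\frac12\rho^4(\sin\varphi)^{1/2}\,d\rho\,d\varphi$. Hence $\|zW\|_{L^2(\mathcal G_\rho(x_c))}^2=\frac12\int_{-\infty}^{\log\rho}e^{4t}\|r_{x_c}(t)\|_{\HH}^2\,dt$. Ball norms and $L^2_t\HH$ norms of $r$ on fixed bands are therefore comparable up to fixed constants, and $B_2\le B_1+Y$ follows directly. No Caccioppoli detour is needed; Caccioppoli controls derivatives, not a change of $L^2$ measure.
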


\begin{proof}
In polar variables centered at \(x_c\),
\[
 z^2\,dx\,dz=\frac12\rho^4(\sin\varphi)^{1/2}
 \,d\rho\,d\varphi,
\]
and hence, with \(t=\log\rho\),
\begin{equation}\label{eq:gauge-log-norm}
\|zW\|_{L^2(\mathcal G_\rho(x_c))}^2
=\frac12\int_{-\infty}^{\log\rho}
 e^{5t}\|w_{x_c}(t)\|_{\HH}^2\,dt.
\end{equation}
Choose a cutoff \(\eta\) supported in
\((\log(1/4),0)\), equal to one on
\([\log(1/2),\log(3/4)]\), with
\[
\operatorname{supp}\eta'
\subset (\log(1/4),\log(3/8))
\cup(\log(7/8),\log(15/16)).
\]
The Caccioppoli enlargement of the left transition remains below
\(\log(1/2)\), while that of the right transition remains between
\(\log(3/4)\) and \(0\). Apply the Carleman estimate with weight
\(e^{-\tau t}\) to \(\eta r_{x_c}\), where
\(r_{x_c}=e^{t/2}w_{x_c}\). Since \(\rho_3=1\),
\[
\|\Gamma_{x_c}\|_\infty
\le C\rho_3\|b\|_\infty\le CM.
\]
Let \(Y\) be the \(L^2_t\HH\) norm of \(r_{x_c}\) on
\((\log(1/2),\log(3/4))\). The two explicit weight gaps give
\begin{equation}\label{eq:gauge-annular-preoptimized}
Y\le C(1+M^2)
\left(e^{a\tau}B_1+e^{-b\tau}B_3\right),
\qquad
a=\log3,
\quad b=\log\frac76,
\end{equation}
for admissible \(\tau\ge C(1+M^2)\), where
\(B_j=\|zW\|_{L^2(\mathcal G_{\rho_j}(x_c))}\). Indeed, after division by
the minimum weight on the observation annulus, the left transition contributes
\((\rho_2/(1/4))^\tau=3^\tau\), whereas the right transition contributes
\(((7/8)/\rho_2)^{-\tau}=(7/6)^{-\tau}\).

Optimizing exactly as in Lemma~\ref{lem:three-cylinder}, including the case
where the formal optimizer is below the Carleman threshold, yields
\[
Y\le Ce^{C(1+M^2)}B_1^\theta B_3^{1-\theta},
\qquad
\theta=\frac{b}{a+b}\in(0,1).
\]
Moreover, \eqref{eq:gauge-log-norm} and
\(e^{5t}\|w\|_{\HH}^2=e^{4t}\|r\|_{\HH}^2\) show that
\[
B_2\le B_1+CY
\le Ce^{C(1+M^2)}B_1^\theta B_3^{1-\theta},
\]
because \(B_1\le B_3\). This proves \eqref{eq:gauge-three-ball} with the
annuli, cutoff supports, and weight gaps fixed numerically.

For the overlap geometry, the Euclidean triangle inequality in the
variables \((z^2,2x)\) shows more generally that
\[
\mathcal G_{\rho_1}(x_j)\subset\mathcal G_{\rho_2}(x_{j+1})
\quad\text{whenever}\quad
|x_{j+1}-x_j|<\frac{\rho_2^2-\rho_1^2}{2}.
\]
For the chosen values,
\[
h_*=\frac18<\frac{\rho_2^2-\rho_1^2}{2}=\frac5{32},
\]
and, if \((x,z)\in\mathcal G_{\rho_1}(x_j)\), then
\[
\begin{aligned}
z^4+4|x-x_{j+1}|^2
&\le\left(\rho_1^2+2h_*\right)^2\\
&=\left(\frac14+\frac14\right)^2
=\frac14<\frac{81}{256}=\rho_2^4.
\end{aligned}
\]
Thus the containment has a strict uniform margin.

Set \(B_j=\|zW\|_{L^2(\mathcal G_{\rho_1}(x_j))}\). The overlap makes the
middle norm at \(x_{j+1}\) at least \(B_j\), while the largest-ball norm is
at most a common upper bound \(U\). Rearranging the three-ball estimate gives
\[
B_{j+1}
\ge
\left(Ce^{C(1+M^2)}U^{1-\theta}\right)^{-1/\theta}
B_j^{1/\theta}.
\]
Writing \(Y_j=-\log B_j\) and iterating for \(j\le L_*\) gives
\[
Y_j\le C_{L_*}
\left(|\log B_0|+1+M^2+\log(1+U)\right),
\]
which is \eqref{eq:gauge-chain-lower}. Only boundary-centered balls are
used, so no translation in the degenerate variable is involved.
\end{proof}

\begin{lemma}[Fixed-scale propagation from a normalized boundary point]
\label{lem:fixed-scale-lower}
Let $v$ be a real bounded solution of
\[
(-\Delta)^{1/4}v+qv=0\quad\text{in }\mathbb R,
\qquad
\|q\|_\infty\le M,
\qquad
\|v\|_\infty\le C_0,
\]
and suppose that $|v(x_s)|\ge c_0>0$ for some $|x_s|\le2$. For the Grushin extension centered at the origin, the radii fixed in
Lemma~\ref{lem:gauge-ball-propagation} may be used so that the resulting
logarithmic annulus is $(-1,0)$ and
\begin{equation}\label{eq:fixed-scale-lower}
\mathcal N_{(-1,0)}(e^{t/2}w)
\ge
\exp\left[-C\left(1+M^2+
 |\log c_0|+\log(1+C_0)\right)\right].
\end{equation}
Moreover, the corresponding norm on $(-4,3)$ is bounded above by
$C(1+C_0)(1+M)^C$.
\end{lemma}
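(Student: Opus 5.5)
The argument chains three earlier results: the boundary seed, the gauge-ball chain, and the polar-coordinate norm identity. The seed is Lemma~\ref{lem:quantitative-boundary-seed} at $x_s$. The chain is Lemma~\ref{lem:gauge-ball-propagation}, run from $x_s$ to the origin. The identity \eqref{eq:gauge-log-norm} converts the resulting small-ball norm at the origin into $\mathcal N_{(-1,0)}(e^{t/2}w)$.

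\textbf{Step 1: a H\"older bound for $v$.} Lemma~\ref{lem:quantitative-boundary-seed} needs a H\"older seminorm $H$ near $x_s$. I would get it from local regularity of bounded solutions. The local $H^{1/2}$ regularity recorded in the contributions, applied to the equation $(-\Delta)^{1/4}v=-qv$, gives $H\le C(1+C_0)(1+M)^C$ for some fixed $\alpha\in(0,1/2)$, since the right-hand side is bounded by $MC_0$. The mechanism would be a bootstrap through the Riesz potential of order $1/2$, or the De Giorgi theory for the weighted extension. This step is the main obstacle: the regularity input has to be quantitative and only polynomial in $M$.

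\textbf{Step 2: the seed.} With this $H$, the scale is $\ell\ge c\bigl(c_0/(1+C_0)(1+M)^C\bigr)^{1/\alpha}$, so $z_0$ is also polynomial in $c_0,(1+C_0)^{-1},(1+M)^{-1}$. Then \eqref{eq:poisson-seed-norm} gives
\[
B_0=\|zW\|_{L^2(\mathcal G_{\rho_1}(x_s))}\ge c\,c_0\,\ell^{1/2}z_0^{3/2},
\]
and hence $|\log B_0|\le C(1+|\log c_0|+\log(1+C_0)+\log(1+M))$. The rectangle of Lemma~\ref{lem:quantitative-boundary-seed} lies in $\mathcal G_{\rho_1}(x_s)$ because $z_0,\ell\le 1/8$.

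\textbf{Step 3: the chain to the origin.} Choose centers $x_0=x_s,\dots,x_{L_*}=0$ with step at most $h_*=1/8$. Since $|x_s|\le2$, $L_*=16$ steps suffice. For the common upper bound $U$, the extension satisfies $|W|\le C_0$ by the maximum principle for the Poisson kernel, and each gauge ball of radius one has bounded $z^2\,dx\,dz$-measure, so $U\le CC_0$. The potential enters through $b=\kappa_{1/4}q$ with $\|b\|_\infty\le CM$. Then \eqref{eq:gauge-chain-lower} gives
\[
\|zW\|_{L^2(\mathcal G_{\rho_1}(0))}\ge\exp\bigl[-C(1+M^2+|\log c_0|+\log(1+C_0))\bigr],
\]
where $\log(1+M)\le 1+M^2$ is absorbed.

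\textbf{Step 4: convert to the annular norm.} By \eqref{eq:gauge-log-norm} centered at $0$, and using $e^{5t}\|w\|_\HH^2=e^{4t}\|r\|_\HH^2$ with $r=e^{t/2}w$,
\[
\|zW\|_{L^2(\mathcal G_{1/2}(0))}^2=\tfrac12\int_{-\infty}^{\log(1/2)}e^{4t}\|r(t)\|_\HH^2\,dt .
\]
I would rescale the Grushin radii by a fixed factor so that the relevant logarithmic annulus becomes $(-1,0)$; this is the freedom the statement allows in the choice of radii. The contribution of $t<-1$ has to be handled separately. One option is to bound it by the fixed-scale three-ball estimate \eqref{eq:gauge-three-ball} with the annulus placed appropriately. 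Cleaner is to apply \eqref{eq:gauge-three-ball} directly with $\rho_2$ the outer radius of the annulus, giving a lower bound for $Y$ rather than for the full small ball. That lower bound is exactly $\mathcal N$ on the annulus, up to the fixed factor $e^{4t}\asymp1$.

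\textbf{Upper bound on $(-4,3)$.} We have $\|r(t)\|_\HH\le e^{t/2}\|w(t)\|_\HH\le Ce^{3/2}C_0$ because $|W|\le C_0$ and $\int_0^\pi\omega\,d\varphi<\infty$. This gives $\mathcal N_{(-4,3)}\le C(1+C_0)$, which is within the stated bound $C(1+C_0)(1+M)^C$.
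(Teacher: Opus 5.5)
Your Steps 1--3 and the upper bound on $(-4,3)$ follow the paper's route: seed, a chain of a fixed number of gauge balls, and $|W|\le C_0$. Step 4, however, has a genuine gap, and it comes from ending the chain at the origin.

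With terminal center $0$, \eqref{eq:gauge-chain-lower} bounds $\|zW\|_{L^2(\mathcal G_{1/2}(0))}$ from below. By \eqref{eq:gauge-log-norm}, this is a weighted norm over the whole range $t\in(-\infty,\log\frac12)$. Nothing prevents that mass from sitting at $t<-1$, near the degenerate center, so no lower bound on $\mathcal N_{(-1,0)}(e^{t/2}w)$ follows. Rescaling the radii does not change this, because every origin-centered gauge ball contains the entire inner region.

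Your proposed repair runs in the wrong direction. Both \eqref{eq:gauge-three-ball} and the bound $Y\le Ce^{C(1+M^2)}B_1^\theta B_3^{1-\theta}$ in its proof are \emph{upper} bounds for the middle ball or annulus. They turn a lower bound on $Y$ into a lower bound on the inner ball $B_1$; that is how the chain propagates. They never turn a lower bound on a ball into one on an annulus. The last link only gives that the small-ball norm at the previous center is at most $B_1(0)+CY$, i.e.\ a lower bound on $\max\{B_1(0),Y\}$, which is not enough. The paper also supplies no estimate controlling $t<-1$ by the annulus; its doubling proposition propagates inward. Discarding $t<-T$ via $|W|\le C_0$ only yields a lower bound on a logarithmic annulus of width proportional to $1+M^2+|\log c_0|+\log(1+C_0)$, not on the unit interval $(-1,0)$.

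The missing idea is to end the chain at a center whose terminal \emph{small} ball already lies inside the origin-centered annulus. The paper takes $x_c=\frac14$:
\begin{itemize}
\item In the variables $(z^2,2x)$, the set $\mathcal G_{1/2}(\frac14)$ is the Euclidean disc of radius $\frac14$ about $(0,\frac12)$.
\item Hence $\frac14<(z^4+4x^2)^{1/2}<\frac34$, i.e.\ $\frac12<\rho<\frac{\sqrt3}{2}$, which lies inside $(e^{-1},1)$.
\item Polar coordinates about the origin give $z^2\,dx\,dz=\frac12\rho^4(\sin\varphi)^{1/2}\,d\rho\,d\varphi$, and for $t<0$ one has $e^{5t}\|w\|_{\HH}^2=e^{4t}\|r\|_{\HH}^2\le\|r\|_{\HH}^2$.
\item Together these yield $\|zW\|_{L^2(\mathcal G_{1/2}(1/4))}\le 2^{-1/2}\,\mathcal N_{(-1,0)}(e^{t/2}w)$.
\end{itemize}
Since $|x_s-\frac14|\le\frac94$, steps of size at most $h_*=\frac18$ give a chain of at most $18$ links. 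The rest of your argument then goes through unchanged.

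A smaller point concerns Step 1. Local $H^{1/2}$ regularity cannot give a H\"older bound, since $H^{1/2}(\mathbb R)\not\hookrightarrow C^\alpha$. The paper instead quotes the interior H\"older estimate for $(-\Delta)^{1/4}v=f\in L^\infty$ (Silvestre). This gives $[v]_{C^\alpha}\le C_\alpha(1+M)C_0$ directly, so that step is a citation rather than the main obstacle.
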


\begin{proof}
Interior Schauder estimates for $(-\Delta)^{1/4}$ with bounded right-hand
side \cite{SilvestreHolder} give, for any fixed $\alpha\in(0,1/2)$,
\[
[v]_{C^\alpha(x_s-1/2,x_s+1/2)}
\le C_\alpha(1+M)C_0.
\]
Lemma~\ref{lem:quantitative-boundary-seed}, with
\(H=C_\alpha(1+M)C_0\), gives a positive lower bound on the seed rectangle.
By taking the absolute constant in \(y_0\) small enough, one has
\(z_0\le1/4\). Since \(\ell\le1/8\), the full seed rectangle in
\eqref{eq:poisson-seed-pointwise} satisfies
\[
z^4+4|x-x_s|^2
\le \frac1{256}+4\left(\frac1{32}\right)^2
=\frac1{128}<\frac1{16}=\rho_1^4,
\]
and is therefore contained in \(\mathcal G_{1/2}(x_s)\). Hence its
weighted bulk norm \(B_0\) satisfies
\begin{equation}\label{eq:initial-seed-log}
|\log B_0|
\le C\left(1+\log(1+M)+|\log c_0|+\log(1+C_0)\right).
\end{equation}
The Poisson kernel is positive and has unit mass, so $|W|\le C_0$; hence
all fixed largest-ball norms in what follows are bounded by an absolute
multiple of $C_0$.

Choose the terminal center
\[
x_c=\frac14.
\]
For \((x,z)\in\mathcal G_{1/2}(1/4)\), the Euclidean triangle inequality in
the variables \((z^2,2x)\) gives
\[
\frac14
<\left(z^4+4x^2\right)^{1/2}
<\frac34.
\]
Thus the terminal small ball lies in the explicit origin-centered annulus
\[
\frac12<\left(z^4+4x^2\right)^{1/4}<\frac{\sqrt3}{2},
\]
which is contained in \(e^{-1}<\rho<1\). Join \(x_s\) to \(x_c\) by
\[
x_j=x_s+j\frac{x_c-x_s}{N},
\qquad
N=\max\left\{1,\left\lceil8|x_c-x_s|\right\rceil\right\}\le18.
\]
Then \(|x_{j+1}-x_j|\le1/8=h_*\), so the chain has the absolute length
\(L_*=18\). Applying
\eqref{eq:gauge-chain-lower} and using \eqref{eq:initial-seed-log} yields
\[
\|zW\|_{L^2(\mathcal G_{\rho_1}(x_c))}
\ge
\exp\left[-C\left(1+M^2+|\log c_0|+\log(1+C_0)\right)\right].
\]
On the terminal ball, the Grushin radius relative to the origin stays in a
fixed compact subinterval of $(e^{-1},1)$. Using the exact weighted Jacobian
in \eqref{eq:gauge-log-norm}, its $\|zW\|_{L^2}$ norm is therefore at most
a fixed multiple of $\mathcal N_{(-1,0)}(e^{t/2}w)$, proving
\eqref{eq:fixed-scale-lower}. Finally, $|W|\le C_0$ directly gives the
stated upper bound on the fixed cylinder (in fact no positive power of
$M$ is needed).
\end{proof}

\begin{theorem}[Centered quantitative vanishing estimate]
\label{thm:centered-vanishing-proved}
Let $q\in L^\infty(\mathbb R;\mathbb R)$ and
$v\in L^\infty(\mathbb R;\mathbb R)$ satisfy
\[
(-\Delta)^{1/4}v+qv=0\quad\text{in }\mathcal S'(\mathbb R),
\qquad
\|q\|_\infty\le M,
\qquad
\|v\|_\infty\le C_0.
\]
Assume that \(|v(x_s)|\ge c_0\in(0,1]\) at some \(|x_s|\le2\). Then
there exist \(r_0,c,C>0\), depending only on \(C_0\), such that
\begin{equation}\label{eq:centered-final}
\|v\|_{L^\infty(-r,r)}
\ge c\,r^{C(1+M^2+|\log c_0|)}
\qquad(0<r<r_0).
\end{equation}
If the pointwise normalization is replaced by
\(\|v\|_{L^2(-2,2)}\ge K\in(0,1]\), then
\begin{equation}\label{eq:centered-final-L2}
\|v\|_{L^\infty(-r,r)}
\ge c\,r^{C(1+M^2+|\log K|)},
\qquad 0<r<r_0,
\end{equation}
with the same dependence of the constants.
\end{theorem}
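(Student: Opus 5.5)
We chain the established pieces: a fixed-scale bulk lower bound, inward propagation to small logarithmic radii, and the annular interpolation that returns the bulk bound to the boundary trace.

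\emph{Step 1: fixed scale.} Center Grushin polar coordinates at the origin and set $r=e^{t/2}w$. Lemma~\ref{lem:fixed-scale-lower} gives
\[
A=\mathcal N_{(-1,0)}(r)\ge\exp[-C(1+M^2+|\log c_0|+\log(1+C_0))]
\]
and $G=\mathcal N_{(-4,3)}(r)\le C(1+C_0)$. Hence
\[
\Lambda=\log(eG/A)\le C(1+M^2+|\log c_0|)
\]
with constants depending on $C_0$.

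The Robin coefficient needs care here. By \eqref{eq:beta-scale}, $\|\Gamma\|$ on a cylinder at radius $\rho_0=e^{t_0}\le e^3$ is at most $C\rho_0 M\le CM$. So $\mu\le CM$ uniformly for all $t\le3$, which is what allows the single cylinder $(-\infty,3)$ in Proposition~\ref{prop:uniform-annular-doubling}.

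\emph{Step 2: inward propagation.} For $T\le-4$, \eqref{eq:inward-lower-propagation} gives
\[
\mathcal N_{(T-3,T)}(r)\ge A\exp[-C(1+M^2+\Lambda)(1+|T|)].
\]
Since $w=e^{-t/2}r$ and $e^{-t/2}\ge1$ for $t<0$, the same lower bound holds for some unit block $\|w\|_{L^2((T',T'+1);\HH)}$ with $T'\in[T-3,T-1]$. Pick a time $\tau_1$ in that block where $\|w(\tau_1)\|_\HH$ is at least this bound.

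\emph{Step 3: back to the boundary.} Apply Corollary~\ref{cor:quarter-annular} at $\rho_0=e^{\tau_1}$; this is a translate of Corollary~\ref{cor:full-interpolation}. Here $\rho_0^2\|V\|^2\le CM^2$, and $\mathscr M_{\rho_0}\le C_0$ because $|W|\le C_0$. This yields
\[
\int_{\mathcal A_{\rho_0}}\frac{|v|^2}{|x|}\,dx
\ge\bigl(e^{-C(1+M^2)}\mathscr H_{\rho_0}\bigr)^{1/\vartheta}.
\]
The annulus $\mathcal A_{\rho_0}$ lies in $|x|\le C\rho_0^2$, and $1/|x|\le C\rho_0^{-2}$ there. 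Therefore
\[
\|v\|_{L^\infty(|x|\le C\rho_0^2)}^2\ge c\,\rho_0^2\cdot\text{(previous bound)}.
\]
Given $r<r_0$, choose $T$ with $Ce^{2T}\approx r$, so that $|T|\approx\tfrac12|\log r|$. Collecting exponents gives
\[
\|v\|_{L^\infty(-r,r)}\ge\exp[-C(1+M^2+\Lambda)(1+|\log r|)].
\]
Using $\Lambda\le C(1+M^2+|\log c_0|)$ and $|\log r|\ge1$, this is \eqref{eq:centered-final}. The lost square on $M^2+\Lambda$ is avoided because $\Lambda$ enters only additively with $M^2$.

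\emph{Step 4: $L^2$ normalization.} Reduce \eqref{eq:centered-final-L2} to the pointwise case. From $\|v\|_{L^2(-2,2)}\ge K$ there is $|x_s|\le2$ with $|v(x_s)|\ge K/2$; take $x_s$ a Lebesgue point, using continuity of $v$ from the Schauder estimate. Then apply \eqref{eq:centered-final} with $c_0=K/2$, so $|\log c_0|\le1+|\log K|$.

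The \textbf{main obstacle} is the bookkeeping in Steps 2--3: passing from the $r$-block bound to a single time slice, and matching the logarithmic annulus of Corollary~\ref{cor:quarter-annular} (whose cylinder $J_{d_*}$ extends a fixed amount $d_*$ on both sides) with the block produced by Step 2. If $J_{d_*}$ exceeds $t\le3$ at large $\rho_0$, we instead use only $\rho_0\le e^{-4}$ and note that $\mathcal A_{\rho_0}$ still lies in a fixed neighborhood of the origin.
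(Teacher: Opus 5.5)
Your proposal is correct and follows the paper's proof essentially step for step. The fixed-scale seed of Lemma~\ref{lem:fixed-scale-lower} bounds $\Lambda$ by $C(1+M^2+|\log c_0|)$, Proposition~\ref{prop:uniform-annular-doubling} propagates inward, an averaging choice of a single radius in the block feeds Corollary~\ref{cor:quarter-annular} with $\mathscr M_{\rho_0}\le C_0$ and $\rho_0^2M^2\le CM^2$, and the $L^2$ normalization reduces to $c_0=K/2$ via the H\"older-continuous representative; your variations are cosmetic (using $e^{-t/2}\ge1$ for $t<0$, and passing straight to $L^\infty$, where $\int_{\mathcal A_{\rho_0}}dx/|x|$ is an absolute constant, so the extra $\rho_0^2$ you keep is unnecessary though harmless).
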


\begin{proof}
Center the Grushin coordinates at zero and put $r_c(t)=e^{t/2}w(t)$.
Lemma~\ref{lem:fixed-scale-lower} and the fixed-scale upper bound show that
in Proposition~\ref{prop:uniform-annular-doubling}
\[
\mu\le CM,
\qquad
\Lambda\le C\left(1+M^2+|\log c_0|+\log(1+C_0)\right).
\]
Put
\[
\mathcal A=1+M^2+|\log c_0|+\log(1+C_0).
\]
The fixed-scale bounds imply \(\Lambda\le C\mathcal A\). Therefore, for
every \(T\ll-1\) there is a unit subinterval
\(I_T\subset(T-3,T)\) such that
\begin{equation}\label{eq:angular-shell-lower}
\int_{I_T}\|w(t)\|_{\HH}^2\,dt
\ge
\exp\left[-C\mathcal A(1+|T|)\right].
\end{equation}
Here the factor \(e^{t/2}\) between \(r_c\) and \(w\) contributes only an
additional absolute multiple of \(|T|\), already absorbed by
\(\mathcal A(1+|T|)\).

Choose $T$ so negative that the boundary annulus associated with every
radius $\rho_0=e^{t_0}$, $t_0\in I_T$, is contained in $(-r,r)$; this means
$T=\frac12\log r-O(1)$. By averaging, $t_0\in I_T$ can be selected so that
$\mathscr H_{\rho_0}(W)$ is bounded below by the right-hand side of
\eqref{eq:angular-shell-lower}. The Poisson maximum principle gives
$\mathscr M_{\rho_0}(W)\le C_0$. Rearranging
Corollary~\ref{cor:quarter-annular}, and using
$\rho_0^2M^2\le C M^2$, yields
\[
\int_{\mathcal A_{\rho_0}}
\frac{|v(x)|^2}{|x|}\,dx
\ge
\exp\left[-C\mathcal A(1+|\log r|)\right].
\]
On this fixed-ratio annulus, $|x|\asymp r$. Since the annulus is contained
in $(-r,r)$,
\[
\int_{-r}^{r}|v(x)|^2\,dx
\ge
r\exp\left[-C\mathcal A(1+|\log r|)\right].
\]
The inequality
$\|v\|_{L^\infty(-r,r)}^2\ge(2r)^{-1}\|v\|_{L^2(-r,r)}^2$
now gives \eqref{eq:centered-final}, after decreasing \(r_0<1\) and
absorbing the factor \(\exp(-C\mathcal A)\) into a fixed enlargement of the
power. Since \(\log(1+C_0)\) is fixed, the displayed constants depend only
on \(C_0\).

If only \(\|v\|_{L^2(-2,2)}\ge K\) is known, the interior H\"older
estimate used in Lemma~\ref{lem:fixed-scale-lower} provides a continuous
representative of \(v\). This representative has a point
\(x_s\in(-2,2)\) with \(|v(x_s)|\ge K/2\); otherwise
the \(L^2\) norm on an interval of length four would be smaller than \(K\).
Applying the preceding argument with \(c_0=K/2\) gives
\eqref{eq:centered-final-L2}.
\end{proof}

\begin{proof}[Proof of Theorem~\ref{thm:quant-landis}]
The second assertion of
Theorem~\ref{thm:centered-vanishing-proved} is the centered endpoint
estimate in Definition~\ref{def:centered-vanishing}. The scaling reduction
in Theorem~\ref{thm:scaling-reduction} gives
\eqref{eq:global-landis-target}.
\end{proof}

\begin{remark}[Critical scale]
Since \(p_n^2\asymp\kappa_n^{1/2}\), the parity-lattice cancellation in
\eqref{eq:K-critical} leaves the factor \(\tau^{-1/2}\). Robin feedback is
therefore absorbed for \(\tau\gtrsim\mu^2\). Under the Landis scaling,
\(\mu\asymp R^{1/2}\), so evaluation at radius \(R^{-1}\) produces the rate
\(R\log R\).
\end{remark}

\section{Conclusion}

The critical quarter-Laplacian admits an exact Grushin--Robin formulation
whose angular spectrum is sufficiently rigid to replace differentiability
of the potential by arithmetic resolvent cancellation. The endpoint
boundary-resolvent estimate yields a rough-coefficient Carleman inequality
with the critical parameter threshold $\tau\simeq\|V\|_\infty^2$. Combining
its centered bulk doubling consequence with the rough-Robin annular
observability theorem transfers quantitative non-vanishing to the boundary.
The final scaling argument gives the critical lower bound
$\exp(-CR\log R)$.

\appendix
\section{Weak formulation, Galerkin passage, and realization by the extension}
\label{app:functional}

This appendix records the functional-analytic framework used in
Section~\ref{sec:full-observability} and verifies that the logarithmic profile
of the quarter-Laplacian extension belongs to that framework on every compact
Grushin annulus.

\begin{definition}[Weak cylinder solution]\label{def:weak-cylinder}
Let $J\subset\mathbb R$ be a bounded interval and
$\Gamma\in L^\infty(J;\mathbb C^{2\times2})$. A function $w$ is a weak
finite-energy solution of
\[
w_{tt}+w_t-A_0w=C^*\Gamma(t)Cw
\]
if
\[
w\in L^2(J;\mathcal V)\cap H^1(J;\HH),
\qquad
w_{tt}\in L^2(J;\mathcal V^*),
\]
and, for every $\psi\in C_c^\infty(J;\mathcal V)$,
\begin{equation}\label{eq:weak-cylinder-definition}
\int_J\!\left(
\langle w_{tt},\psi\rangle_{\mathcal V^*,\mathcal V}
 +(w_t,\psi)_{\HH}
 -\mathfrak a_0[w,\psi]
 -(\Gamma Cw,C\psi)_{\mathbb C^2}
\right)\dd t=0.
\end{equation}
Equivalently, after integration by parts in $t$, the first term can be
replaced by $-(w_t,\psi_t)_{\HH}$.
\end{definition}

\begin{proposition}[Spectral Galerkin passage]\label{prop:galerkin-passage}
Let $w$ be a weak solution in the sense of
Definition~\ref{def:weak-cylinder}, and write
$c_n(t)=(w(t),\Phi_n)_{\HH}$. Then $c_n\in H^2(J)$ and
\begin{equation}\label{eq:weak-mode-equation}
c_n''+c_n'-\lambda_nc_n
=\langle\Gamma(t)Cw(t),v_n\rangle_{\mathbb C^2}
\quad\text{a.e.\ on }J.
\end{equation}
Moreover, with $P_m$ denoting the angular spectral projector,
\[
P_mw\longrightarrow w
\quad\text{in }L^2(J;\mathcal V)\cap H^1(J;\HH),
\]
\[
CP_mw\longrightarrow Cw
\quad\text{in }L^2(J;\mathbb C^2),
\qquad
P_mw\longrightarrow w
\quad\text{in }C(\overline J;\HH).
\]
Consequently, the finite-mode Green representations and all estimates in
Section~\ref{sec:full-observability} pass to weak finite-energy solutions.
\end{proposition}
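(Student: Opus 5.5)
The plan is to reduce everything to the scalar mode equations by testing the weak formulation with separated test functions, and then to obtain the convergence statements from Parseval-type arguments in the spectral scale of $A_0$.

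\emph{Step 1: the mode equations.} Fix $n$ and take $\psi(t)=\phi(t)\Phi_n$ in \eqref{eq:weak-cylinder-definition}, where $\phi\in C_c^\infty(J)$; this is admissible since $\Phi_n\in\mathcal V$.

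\begin{itemize}
\item By self-adjointness, $\mathfrak a_0[w(t),\Phi_n]=\lambda_n c_n(t)$.
\item The boundary term is $(\Gamma Cw,\,v_n)_{\mathbb C^2}\phi$, because $C\Phi_n=v_n$ and $\Gamma$, $\phi$ are real.
\item Since $w\in H^1(J;\HH)$, we have $c_n\in H^1(J)$ with $c_n'=(w_t,\Phi_n)_{\HH}$.
\end{itemize}

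The weak identity then says that, in the distributional sense on $J$,
\[
c_n''=\lambda_nc_n-c_n'+\langle\Gamma Cw,v_n\rangle .
\]
The right-hand side lies in $L^2(J)$. Indeed, $C:\mathcal V\to\mathbb C^2$ is bounded by Theorem~\ref{thm:trace}, so $Cw\in L^2(J;\mathbb C^2)$. Hence $c_n\in H^2(J)$ and \eqref{eq:weak-mode-equation} holds almost everywhere. If needed, the conjugation convention in the pairing is fixed by the order used in Lemma~\ref{lem:high-trace-decoupling}.

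\emph{Step 2: convergence of the projections.} Write
\[
\|w\|_{\mathcal V}^2\asymp\sum_n\kappa_n^2|c_n|^2 .
\]
For almost every $t$, dominated convergence of the spectral tails gives $\|(I-P_m)w(t)\|_{\mathcal V}\to0$. Since this quantity is dominated by $\|w(t)\|_{\mathcal V}\in L^2(J)$, dominated convergence in $t$ gives convergence in $L^2(J;\mathcal V)$. The same argument applied to $w_t$, using $(P_mw)_t=P_mw_t$, gives convergence in $H^1(J;\HH)$. The trace convergence in $L^2(J;\mathbb C^2)$ then follows from boundedness of $C$ on $\mathcal V$.

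For uniform convergence in $C(\overline J;\HH)$ there are two routes. One can use the embedding $H^1(J;\HH)\hookrightarrow C(\overline J;\HH)$ applied to $(I-P_m)w$, whose $H^1(J;\HH)$ norm tends to zero. Alternatively, $\|(I-P_m)w(t)\|_{\HH}$ is a decreasing sequence of continuous functions converging pointwise to zero, and Dini's theorem gives uniform convergence.

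\emph{Step 3: passing the Section~\ref{sec:full-observability} estimates to weak solutions.} Those estimates are stated through the modes $c_n$ and the quantities $Cw$, $\mathcal M_d(w)$.

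\begin{itemize}
\item The Green representation $c_n=h_n+\mathcal G_nf_n$ of Lemma~\ref{lem:scalar-green-margin} holds for each $n$ because $c_n\in H^2$. Its boundary values $c_n(a),c_n(b)$ are well defined, and $|c_n(a)|\le\|w(a)\|_{\HH}\le\mathcal M_d(w)$.
\item The infinite sums over $n>N$ in Lemma~\ref{lem:high-trace-decoupling} are obtained as limits of the finite partial sums $P_m$. Each term is controlled uniformly, since the Schur bounds do not depend on $m$. The trace identity $CP_{>N}w=\lim_m C(P_m-P_N)w$ in $L^2$ is exactly Step 2.
\item For Corollary~\ref{cor:truncated-input} applied to $q=P_{\le N}w$, the mode equations show that $q\in H^2(J;\HH_N)$ and that $q$ solves the stated finite system with $y=Cw$.
\end{itemize}

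\emph{Main obstacle.} The delicate point is identifying the boundary term in the mode equation. The trace $Cw(t)$ is only defined for almost every $t$, as an element of $L^2$, so one must check that testing with the product $\phi\Phi_n$ legitimately commutes $C$ with time integration. I would handle this through the bounded-operator formulation: $t\mapsto Cw(t)$ is the Bochner composite of the bounded map $C$ with $w\in L^2(J;\mathcal V)$.
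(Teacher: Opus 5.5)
Your proposal is correct and follows essentially the same route as the paper. You test with $\phi(t)\Phi_n$ to get the mode equation with an $L^2(J)$ right-hand side (so $c_n\in H^2(J)$), obtain $P_mw\to w$ in $L^2(J;\mathcal V)\cap H^1(J;\HH)$ from the spectral description of the $\mathcal V$-norm and dominated convergence, use trace continuity and $H^1(J;\HH)\hookrightarrow C(\overline J;\HH)$, and pass the finite-mode estimates to the limit via the $m$-uniform Schur bounds. One harmless slip: Definition~\ref{def:weak-cylinder} allows complex $\Gamma$, but $(\Gamma Cw,\phi v_n)_{\mathbb C^2}=\phi\,(\Gamma Cw,v_n)_{\mathbb C^2}$ needs only that $\phi$ and $v_n$ are real.
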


\begin{proof}
Testing \eqref{eq:weak-cylinder-definition} with
$\psi(t)=\eta(t)\Phi_n$, $\eta\in C_c^\infty(J)$, gives
\eqref{eq:weak-mode-equation} in distributions. The right-hand side belongs
to $L^2(J)$ because the trace map $C:\mathcal V\to\mathbb C^2$ is bounded;
therefore $c_n\in H^2(J)$.

The spectral representation of the closed form gives
\[
\|f\|_{\mathcal V}^2\asymp
\sum_{n\ge0}(1+\lambda_n)|(f,\Phi_n)_{\HH}|^2.
\]
Dominated convergence in the corresponding coefficient sums yields
$P_mw\to w$ in $L^2(J;\mathcal V)$ and
$P_mw_t\to w_t$ in $L^2(J;\HH)$. Trace continuity then gives
$CP_mw\to Cw$ in $L^2(J;\mathbb C^2)$. Finally,
$P_mw\to w$ in $H^1(J;\HH)$, and the one-dimensional embedding
$H^1(J;\HH)\hookrightarrow C(\overline J;\HH)$ gives uniform convergence in
$\HH$. Applying the finite-mode estimates first to
$P_mw$ (with the full trace $Cw$ as the projected boundary input), then
letting $m\to\infty$, is justified by these convergences, Fatou's lemma, and
the uniform Schur bounds of Lemma~\ref{lem:high-trace-decoupling}.
\end{proof}

\begin{lemma}[Local regularity and the cutoff commutator]
\label{lem:local-fractional-regularity}
Let $u\in L^\infty(\mathbb R)$ and $f\in L^2_{\rm loc}(\mathbb R)$ satisfy
\[
(-\Delta)^{1/4}u=f\quad\text{in }\mathcal S'(\mathbb R).
\]
Then $u\in H^{1/2}_{\rm loc}(\mathbb R)$. More precisely, for every
$\chi\in C_c^\infty(\mathbb R)$,
\begin{equation}\label{eq:local-regularity-estimate}
\|\chi u\|_{H^{1/2}(\mathbb R)}
\le C_\chi\left(\|\chi f\|_{L^2(\mathbb R)}
+\|u\|_{L^\infty(\mathbb R)}\right).
\end{equation}
\end{lemma}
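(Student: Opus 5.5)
We localize with a cutoff and control the commutator between the cutoff and the fractional operator. Fix $\chi\in C_c^\infty(\mathbb R)$ and choose $\tilde\chi\in C_c^\infty(\mathbb R)$ with $\tilde\chi=1$ on a neighbourhood of $\operatorname{supp}\chi$. Write
\[
(-\Delta)^{1/4}(\chi u)=\chi f+[(-\Delta)^{1/4},\chi]u ,
\]
which is valid in $\mathcal S'$ because $\chi u\in L^2$ has compact support. We then aim to show that the commutator term lies in $L^2$ with norm at most $C_\chi\|u\|_{L^\infty}$. Granting this, the Fourier side gives $|\xi|^{1/2}\widehat{\chi u}\in L^2$. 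Together with $\|\chi u\|_{L^2}\le\|\chi\|_{L^2}\|u\|_\infty$, this yields
\[
\|\chi u\|_{H^{1/2}}\le C\bigl(\|\chi f\|_{L^2}+\|[(-\Delta)^{1/4},\chi]u\|_{L^2}+\|u\|_\infty\bigr),
\]
which is \eqref{eq:local-regularity-estimate}.

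For the commutator we use the singular-integral kernel $c|x-y|^{-3/2}$ of $(-\Delta)^{1/4}$:
\[
[(-\Delta)^{1/4},\chi]u(x)=c\int\frac{(\chi(y)-\chi(x))\,u(y)}{|x-y|^{3/2}}\,dy .
\]
We split $u=\tilde\chi u+(1-\tilde\chi)u$ and treat the two pieces separately.

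\textbf{Local piece.} The kernel $(\chi(y)-\chi(x))|x-y|^{-3/2}$ is bounded by $C\min(|x-y|^{-1/2},|x-y|^{-3/2})$, which is integrable in $y$ uniformly in $x$, and symmetrically in $x$ uniformly in $y$. Schur's test therefore bounds the local piece on $L^2$ by $C\|\tilde\chi u\|_{L^2}\le C\|u\|_\infty$.

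\textbf{Far piece.} Here $(1-\tilde\chi)u$ vanishes near $\operatorname{supp}\chi$.
\begin{itemize}
\item For $x$ in a neighbourhood of $\operatorname{supp}\chi$, the term $\chi(x)(-\Delta)^{1/4}[(1-\tilde\chi)u](x)$ is a smooth convolution against $|x-y|^{-3/2}$ with $|x-y|\ge c>0$. It is therefore bounded by $C\|u\|_\infty$ on a compact set.
\item For $x$ far from $\operatorname{supp}\chi$, only $\chi(y)$ contributes. But $\chi(y)(1-\tilde\chi(y))=0$, so this term vanishes.
\end{itemize}
Hence the far piece is bounded with compact support, and so lies in $L^2$ with norm $\le C\|u\|_\infty$.

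The main obstacle is justifying these pointwise kernel formulas when $u$ is only bounded, so that $(-\Delta)^{1/4}u$ is defined only distributionally. The kernel formulas themselves hold for the Schwartz functions $\chi,\tilde\chi$ and their products. To make the argument rigorous, we pair with a test function $\psi$ and use the duality definition $\langle(-\Delta)^{1/4}u,\psi\rangle=\langle u,(-\Delta)^{1/4}\psi\rangle$; the decay $|(-\Delta)^{1/4}\psi(x)|\lesssim(1+|x|)^{-3/2}$ makes this pairing absolutely convergent for $u\in L^\infty$. Verifying that the commutator identity holds in this weak sense, namely that $\langle u,\,\chi(-\Delta)^{1/4}\psi-(-\Delta)^{1/4}(\chi\psi)\rangle$ is given by the kernel integral, is Fubini bookkeeping. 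Alternatively, we can mollify $u$: the estimate is uniform under $u\mapsto u*\varphi_\varepsilon$, with $f$ replaced by $f*\varphi_\varepsilon$ and $\chi f*\varphi_\varepsilon\to\chi f$ locally in $L^2$. Passing to the limit by weak compactness in $H^{1/2}$ then gives the bound.
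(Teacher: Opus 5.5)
Your proposal is correct and follows the paper's route. You localize via $(-\Delta)^{1/4}(\chi u)=\chi f+[(-\Delta)^{1/4},\chi]u$, bound the commutator in $L^2$ by $C_\chi\|u\|_{L^\infty}$ through its kernel $(\chi(x)-\chi(y))|x-y|^{-3/2}$ (justified by mollification or duality), and conclude through the Fourier characterization of $H^{1/2}$. The only cosmetic difference is that you split $u$ into near and far pieces and use Schur's test on the near piece, whereas the paper bounds the commutator pointwise by $C_\chi\|u\|_\infty$ and uses its $(1+|x|)^{-3/2}$ decay at infinity.
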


\begin{proof}
For $P=(-\Delta)^{1/4}$, the singular-integral formula gives, first for
smooth bounded functions,
\begin{equation}\label{eq:fractional-commutator-formula}
[P,\chi]u(x)
=c\int_{\mathbb R}
\frac{\chi(x)-\chi(y)}{|x-y|^{3/2}}u(y)\,dy.
\end{equation}
For a general bounded distributional solution, apply this identity to
$u_\varepsilon=u*\varrho_\varepsilon$ and pass to the limit in
$\mathcal S'$; the bounds below are uniform in $\varepsilon$. The integral
in \eqref{eq:fractional-commutator-formula} is absolutely convergent: near $x=y$ the numerator is
$O(|x-y|)$, and away from the diagonal the kernel
$|x-y|^{-3/2}$ is integrable. On a fixed compact set it is bounded by
$C_\chi\|u\|_\infty$, while outside a larger compact set it decays like
$C_\chi\|u\|_\infty(1+|x|)^{-3/2}$. Hence
\[
\|[P,\chi]u\|_{L^2(\mathbb R)}
\le C_\chi\|u\|_{L^\infty(\mathbb R)}.
\]
Distributionally,
$P(\chi u)=\chi f+[P,\chi]u\in L^2(\mathbb R)$. Since
\[
\|g\|_{H^{1/2}}^2\asymp
\|g\|_{L^2}^2+\|Pg\|_{L^2}^2,
\]
we obtain \eqref{eq:local-regularity-estimate} and the asserted local
regularity.
\end{proof}

\begin{proposition}[Realization by the quarter-Laplacian extension]
\label{prop:extension-realization}
Let $V\in L^\infty_{\rm loc}(\mathbb R)$ and
$u\in L^\infty(\mathbb R)$ satisfy
\[
(-\Delta)^{1/4}u+Vu=0
\quad\text{in }\mathcal S'(\mathbb R).
\]
Let $U$ be the Caffarelli--Silvestre Poisson extension and
$W(x,z)=U(x,z^2/2)$. For every $0<\rho_-<\rho_+<\infty$, set
$J=(\log\rho_-,\log\rho_+)$ and
\[
w(t,\varphi)=W\!\left(\frac{e^{2t}}2\cos\varphi,
 e^t\sqrt{\sin\varphi}\right).
\]
Then $w$ is a weak finite-energy solution on $J$ in the sense of
Definition~\ref{def:weak-cylinder}, with the diagonal coefficient
\eqref{eq:beta-exact}. In particular,
\[
w\in L^2(J;\mathcal V)\cap H^1(J;\HH),
\qquad
w_{tt}\in L^2(J;\mathcal V^*).
\]
\end{proposition}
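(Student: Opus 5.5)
The plan is to establish the weak conormal identity \eqref{eq:weak-robin-reduction} for bounded data, then transport it to the logarithmic cylinder. The local energy bounds come from the Poisson representation and Lemma~\ref{lem:local-fractional-regularity}.

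\emph{Step 1 (local energy of the extension).} Fix a compact Grushin annulus $\{\rho_-\le\rho\le\rho_+\}$; its boundary projection is a compact set $K\subset\mathbb R\setminus\{0\}$. Choose $\chi\in C_c^\infty(\mathbb R)$ with $\chi=1$ on a neighbourhood of $[-\rho_+^2/2,\rho_+^2/2]$ and split $u=\chi u+(1-\chi)u$, so that $U=U_1+U_2$. Since $f=-Vu\in L^\infty_{\rm loc}$, Lemma~\ref{lem:local-fractional-regularity} gives $\chi u\in H^{1/2}$. Hence $U_1$ has finite weighted energy $\int y^{1/2}(|\nabla U_1|^2)\,dx\,dy\le C\|\chi u\|_{\dot H^{1/4}}^2$. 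The far part $U_2$ is given by the Poisson kernel $P_y(x-h)$ with $h$ outside a neighbourhood of the annulus projection. It is therefore smooth up to $y=0$ on the annulus, with derivatives bounded by $C\|u\|_\infty$, and its conormal derivative $\lim y^{1/2}\partial_yU_2$ is a bounded smooth function there. Changing variables $y=z^2/2$ shows $W\in H^1_{\rm loc}$ of the annulus with respect to $z\,dz\,dx$-type weights. Transporting to $(t,\varphi)$ through the Jacobian in \eqref{eq:gauge-log-norm} then gives $w\in L^2(J;\mathcal V)\cap H^1(J;\HH)$. The angular weight $\omega=\sin^{1/2}\varphi$ matches $z^2\,dx\,dz$ up to powers of $\rho$, which are bounded above and below on $J$.

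\emph{Step 2 (weak Robin identity for bounded data).} Mollify, $u_\varepsilon=u*\varrho_\varepsilon$, and apply the finite-energy Dirichlet-to-Neumann identity of Proposition~\ref{prop:reduction} to $\chi u_\varepsilon$. The remainder $(1-\chi)u_\varepsilon$ is handled by the explicit smooth far-field formula together with the commutator formula \eqref{eq:fractional-commutator-formula}. This gives the identity for test functions $\Psi$ supported over the annulus, with boundary term $\langle(-\Delta)^{1/4}u_\varepsilon,\Psi(\cdot,0)\rangle$. Now let $\varepsilon\to0$: the bulk term converges by Step 1, since the energy bounds are uniform in $\varepsilon$, and $(-\Delta)^{1/4}u_\varepsilon=-(Vu)*\varrho_\varepsilon\to-Vu$ in $L^2_{\rm loc}$. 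This yields \eqref{eq:weak-robin-reduction} with $b=\kappa_{1/4}V$. Transforming to $(t,\varphi)$ with tests $\psi\in C_c^\infty(J;\mathcal V)$ gives \eqref{eq:weak-cylinder-local} with $\beta_0,\beta_\pi$ as in \eqref{eq:beta-exact}. Smooth tests in $(x,z)$ supported in the annulus correspond to a dense class in $C_c^\infty(J;\mathcal V)$, and the identity extends by continuity, using Theorem~\ref{thm:trace} for the boundary term.

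\emph{Step 3 ($w_{tt}\in L^2(J;\mathcal V^*)$).} The weak identity shows that the distributional derivative $w_{tt}$ equals $-w_t+A_0w+C^*\Gamma Cw$. Each term lies in $L^2(J;\mathcal V^*)$: $w_t\in L^2(\HH)$, $A_0:\mathcal V\to\mathcal V^*$ is bounded, and $C^*\Gamma C:\mathcal V\to\mathcal V^*$ is bounded with $\Gamma\in L^\infty(J)$ by \eqref{eq:beta-scale}. This verifies Definition~\ref{def:weak-cylinder}.

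The main obstacle is Step 2. The Dirichlet-to-Neumann identity holds only for finite-energy data, but $u$ is merely bounded. The delicate point is showing that the far-field piece contributes exactly the commutator term and no spurious boundary mass, uniformly in $\varepsilon$. I would do this by computing $\lim y^{1/2}\partial_yU_2$ explicitly from the kernel $\partial_y(y^{1/2}P_y)$: it reproduces the singular-integral kernel $|x-h|^{-3/2}$ up to the constant $d_{1/4}$, and this cancels against $[(-\Delta)^{1/4},\chi]u$.
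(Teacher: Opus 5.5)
Your proposal is correct. Steps 1 and 3 match the paper: the near/far split with Lemma~\ref{lem:local-fractional-regularity} for the local energy, and then reading off $w_{tt}=-w_t+A_0w+C^*\Gamma Cw\in L^2(J;\mathcal V^*)$. The difference is in Step 2.

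The paper never computes the tail's Neumann data. It truncates at large scale, $u_R=\chi_Ru\in H^{1/2}$, applies the global finite-energy identity to $U_R$, and lets $R\to\infty$. The bulk side converges by $R$-uniform kernel bounds on the fixed half-cylinder. The boundary side converges via $\langle(-\Delta)^{1/4}u_R,\phi\rangle=\int u_R\,(-\Delta)^{1/4}\phi\,dx$, using $(-\Delta)^{1/4}\phi\in L^1$ and dominated convergence.

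You instead keep a fixed cutoff and integrate by parts on $\{y>s\}$. This shows that on $\{\chi=1\}$ the tail contributes $\frac{1}{d_{1/4}}\int\phi\,(-\Delta)^{1/4}((1-\chi)u)\,dx=-\frac{1}{d_{1/4}}\int\phi\,[(-\Delta)^{1/4},\chi]u\,dx$, which cancels the commutator from the near part. This makes the absence of spurious boundary mass explicit. However, it depends on an exact match between $\lim_{y\downarrow0}y^{1/2}\partial_yP_y(h)=\tfrac{c_*}{2}|h|^{-3/2}$ and the singular-integral constant of $(-\Delta)^{1/4}$. You can obtain that match by applying the finite-energy identity to a $C_c^\infty$ datum and testing away from its support. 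The paper's duality route needs no such constant bookkeeping.

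Be careful with the kernel you name: $\partial_y(y^{1/2}P_y)$ tends to $c_*|h|^{-3/2}$, which is twice the conormal value and would break the cancellation. The correct kernel is $y^{1/2}\partial_yP_y$. The mollification is also unnecessary: $\chi u\in H^{1/2}$ already has finite energy, and the off-support formula holds for bounded measurable tails.

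Three smaller points:
\begin{itemize}
\item $U_2$ is not smooth up to $y=0$ in the $y$ variable, since $\partial_yU_2=O(y^{-1/2})$; it is smooth in $(x,z)$. This is harmless, because only $y^{1/2}\partial_yU_2$ and the weighted energy enter.
\item Transferring the energy to the cylinder requires the gradient identity \eqref{eq:energy-coordinate-identity}. The mass Jacobian \eqref{eq:gauge-log-norm}, together with $|W|\le\|u\|_\infty$, gives only $w\in L^2(J;\HH)$.
\item To obtain exactly \eqref{eq:weak-cylinder-definition} with the coefficients \eqref{eq:beta-exact}, use the test $\Psi=2\rho^{-1}\psi(\log\rho,\varphi)$.
\end{itemize}
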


\begin{proof}
Here \(f=-Vu\in L^2_{\rm loc}\), and
Lemma~\ref{lem:local-fractional-regularity} gives
\(u\in H^{1/2}_{\rm loc}\). Let
\(Q=I\times(0,Y)\) be a compact half-cylinder containing the Grushin annulus
under consideration. Choose \(\chi_0\in C_c^\infty(\mathbb R)\) equal to
one on a neighborhood of \(I\), and write
\(u=u_1+u_2\), where \(u_1=\chi_0u\). Then
\(u_1\in H^{1/2}(\mathbb R)\subset H^{1/4}(\mathbb R)\), so its extension
has finite global weighted energy.

The tail \(u_2\) is supported a positive distance from \(I\). Direct
differentiation of the quarter-Poisson kernel gives, for \(x\in I\) and
\(0<y<Y\),
\[
|\partial_xU_2(x,y)|\le C y^{1/2}\|u\|_\infty,
\qquad
|\partial_yU_2(x,y)|\le C y^{-1/2}\|u\|_\infty.
\]
Hence \(y^{1/2}|\nabla U_2|^2\le
C(y^{3/2}+y^{-1/2})\), and therefore
\begin{equation}\label{eq:local-extension-energy}
\int_Q y^{1/2}|\nabla U|^2\,dx\,dy<\infty.
\end{equation}
Under \(y=z^2/2\), this is equivalent, up to the constant \(2^{-1/2}\), to
\begin{equation}\label{eq:local-grushin-energy}
\int\left(|W_z|^2+z^2|W_x|^2\right)\,dx\,dz<\infty
\end{equation}
on compact Grushin annuli.

We next establish the weak identity for bounded data. Let \(\chi_R\in C_c^\infty(\mathbb R)\) satisfy
\(\chi_R=1\) on \((-R,R)\), \(0\le\chi_R\le1\), and put
\(u_R=\chi_Ru\). For \(R\) large enough relative to \(I\),
\[
u_R=u_1+u_{2,R},
\qquad
u_{2,R}=(\chi_R-\chi_0)u,
\]
and Lemma~\ref{lem:local-fractional-regularity}, applied with the cutoff
\(\chi_R\), gives \(u_R\in H^{1/2}(\mathbb R)\subset H^{1/4}(\mathbb R)\).
Its extension \(U_R\) satisfies the
global energy identity
\begin{equation}\label{eq:truncated-extension-identity}
\int_{y>0}y^{1/2}\nabla U_R\cdot\nabla\Phi\,dx\,dy
=\frac1{d_{1/4}}
\langle(-\Delta)^{1/4}u_R,\Phi(\cdot,0)\rangle
\end{equation}
for every smooth compactly supported test \(\Phi\).

On \(Q\), the same kernel bounds as above hold for \(U_{2,R}\) with
constants independent of \(R\). Moreover,
\(\nabla U_{2,R}\to\nabla U_2\) pointwise. The integrable majorant
\(C(y^{3/2}+y^{-1/2})\) therefore yields convergence in the local weighted
energy pairing, while the \(u_1\) part is fixed. Thus the left-hand side of
\eqref{eq:truncated-extension-identity} converges to the corresponding
pairing with \(U\).

For \(\phi=\Phi(\cdot,0)\), distributional duality gives
\[
\langle(-\Delta)^{1/4}u_R,\phi\rangle
=\int_{\mathbb R}u_R(-\Delta)^{1/4}\phi\,dx.
\]
Since \((-\Delta)^{1/4}\phi\in L^1(\mathbb R)\), bounded convergence implies
that this tends to
\(\langle(-\Delta)^{1/4}u,\phi\rangle\). Passing to the limit in
\eqref{eq:truncated-extension-identity} and using
\((-\Delta)^{1/4}u=-Vu\) proves
\[
\int_{y>0}y^{1/2}\nabla U\cdot\nabla\Phi\,dx\,dy
=-\frac1{d_{1/4}}\int_{\mathbb R}V(x)u(x)\Phi(x,0)\,dx.
\]
Finally set \(\Phi(x,y)=\Psi(x,\sqrt{2y})\). Although this function need not
be smooth at \(y=0\), it belongs to the weighted energy test space. Indeed,
with \(z=\sqrt{2y}\),
\begin{align*}
\int_{y>0} y^{1/2}|\Phi_y|^2\,dx\,dy
&=\frac1{\sqrt2}\int_{z>0}|\Psi_z|^2\,dx\,dz,\\
\int_{y>0} y^{1/2}|\Phi_x|^2\,dx\,dy
&=\frac1{\sqrt2}\int_{z>0}z^2|\Psi_x|^2\,dx\,dz.
\end{align*}
Its trace is \(\Psi(\cdot,0)\), and approximation in the weighted energy
norm by smooth tests makes it admissible in the extension identity. The
change \(y=z^2/2\) multiplies the Grushin energy pairing by
\(2^{-1/2}\), and hence gives
\begin{equation}\label{eq:weak-grushin-robin}
\int_{z>0}\left(W_z\Psi_z+z^2W_x\Psi_x\right)\,dx\,dz
=-\int_{\mathbb R}b(x)u(x)\Psi(x,0)\,dx,
\end{equation}
where \(b=\kappa_{1/4}V\). This proves the conormal identity, including the contribution of the bounded
tail, for every test supported in a compact Grushin annulus.

The coordinate Jacobian and the bilinear Grushin energy identity are
\[
\left|\frac{\partial(x,z)}{\partial(\rho,\varphi)}\right|
=\frac{\rho^2}{2\sqrt{\sin\varphi}},
\]
\begin{equation}\label{eq:energy-coordinate-identity}
\int_{\rho_-<\rho<\rho_+}
\left(|W_z|^2+z^2|W_x|^2\right)\,dx\,dz
=\frac12\int_J e^t
\left(\|w_t\|_{\HH}^2+\mathfrak a_0[w,w]\right)\,dt.
\end{equation}
The Poisson kernel is positive and has unit mass, so
$|w(t,\varphi)|\le\|u\|_\infty$ and hence $w\in L^2(J;\HH)$. Since $e^t$
is bounded above and below on $J$,
\eqref{eq:energy-coordinate-identity} then gives
$w_t\in L^2(J;\HH)$ and $w\in L^2(J;\mathcal V)$. Thus
$w\in L^2(J;\mathcal V)\cap H^1(J;\HH)$. Moreover,
\eqref{eq:beta-exact} and $V\in L^\infty_{\rm loc}$ imply
$\Gamma\in L^\infty(J;\mathbb R^{2\times2})$.

It remains to derive the cylinder equation directly at the energy level. First
take $\psi\in C_c^\infty(J;C^\infty([0,\pi]))$ and use in
\eqref{eq:weak-grushin-robin} the test function whose polar representation
is
\[
\Psi(\rho,\varphi)=2\rho^{-1}\psi(\log\rho,\varphi).
\]
A direct change of variables gives
\begin{align}
&\int_{z>0}\left(W_z\Psi_z+z^2W_x\Psi_x\right)\,dx\,dz
\notag\\
&\qquad=\int_J\left(
(w_t,\psi_t)_{\HH}-(w_t,\psi)_{\HH}
+\mathfrak a_0[w,\psi]\right)\,dt.
\label{eq:weak-coordinate-transform}
\end{align}
On the boundary $z=0$, the two components are
$x=e^{2t}/2$ and $x=-e^{2t}/2$, while
$|dx|=e^{2t}dt$. Hence the right-hand side of
\eqref{eq:weak-grushin-robin} becomes
\[
-\int_J(\Gamma(t)Cw(t),C\psi(t))_{\mathbb C^2}\,dt,
\]
with $\Gamma$ given by \eqref{eq:beta-exact}. Integrating the first term in
\eqref{eq:weak-coordinate-transform} by parts in $t$ yields exactly
\eqref{eq:weak-cylinder-definition}. Density of
$C_c^\infty(J;C^\infty([0,\pi]))$ in
$C_c^\infty(J;\mathcal V)$ and the continuity of
$C:\mathcal V\to\mathbb C^2$ extend the identity to every admissible test
function.

Finally, the identity defines
\[
w_{tt}=-w_t+A_0w+C^*\Gamma Cw
\quad\text{in }\mathcal V^*.
\]
The three terms on the right belong to $L^2(J;\mathcal V^*)$: the first by
$\HH\hookrightarrow\mathcal V^*$, the second by the bounded form map
$\mathcal V\to\mathcal V^*$, and the third by the trace inequality and
$\Gamma\in L^\infty(J)$. Thus
$w_{tt}\in L^2(J;\mathcal V^*)$ and the proposition follows.
\end{proof}

\begin{remark}
The preceding proposition is local in the Grushin radius. On each compact
logarithmic interval it identifies the concrete extension with the weak
cylinder energy class used in the interpolation estimates; no evolution
interpretation of the logarithmic variable is involved.
\end{remark}

\end{document}